\newcommand{\nocontentsline}[3]{}
\newcommand{\tocless}[2]{\bgroup\let\addcontentsline=\nocontentsline#1*{#2}\egroup}
\newtheorem{teorema}{Theorem}[section]
\newtheorem*{teoA}{Theorem A}
\newtheorem*{teoB}{Theorem B}
\newtheorem*{teoC}{Theorem C}
\newtheorem*{teoD}{Theorem D}
\newtheorem*{teoE}{Theorem E}
\newtheorem{cor}[teorema]{Corollary}
\newtheorem{lema}[teorema]{Lemma}
\newtheorem{prop}[teorema]{Proposition}
\newtheorem{exem}[teorema]{Example}
\theoremstyle{definition}
\newtheorem*{defii}{Definition}
\newtheorem{obse}[teorema]{Remark}
\newcommand{\overbar}[1]{\mkern 1.5mu\overline{\mkern-1.5mu#1\mkern-1.5mu}\mkern 1.5mu}
\newcommand{\E}{\mathcal{E}}
\newcommand{\D}{\mathcal D}
\DeclareMathOperator{\ob}{Ob}
\newcommand{\teo}{\mathrm{Th}}
\newcommand{\subob}{\mathit{\Omega}}
\DeclareMathOperator{\dso}{\Gamma}
\DeclareMathOperator{\dec}{Dec}
\DeclareMathOperator{\ev}{ev}
\DeclareMathOperator{\sh}{Sh}
\newcommand{\set}{\mathbf{Set}}
\begin{document}
\title[Naive homotopy theories in cartesian closed categories]{Naive homotopy theories in cartesian closed categories}
\author[Ruiz]{Enrique Ruiz Hern\'andez${}^\dagger$}
\address[$\dagger$]{Centro de Investigaci\'on en Teor\'ia de Categor\'ias y sus Aplicaciones, A.C. M\'e\-xi\-co.}
\curraddr{}
\email{e.ruiz-hernandez@cinvcat.org.mx}

\author[Sol\'orzano]{Pedro Sol\'orzano${}^\star$}
\address[$\star$]{Instituto de Matem\'aticas, Universidad Nacional Aut\'onoma de M\'exico, Oaxaca de Ju\'a\-rez, M\'e\-xi\-co.}
\curraddr{}
\email{pedro.solorzano@matem.unam.mx}
\thanks{($\star$) {CONACYT-UNAM} Research fellow.}
\date{\today}
\begin{abstract}
An elementary notion of homotopy can be introduced between arrows in a cartesian closed category $\E$. The input is a finite-product-preserving endofunctor $\Pi_0$ with a natural transformation $p$ from the identity which is surjective on global elements. As expected, the output is a new category $\E_p$ with objects the same objects as $\E$. 

Further assumptions on $\E$ provide a finer description of $\E_p$ that relates it to the classical homotopy theory where $\Pi_0$ could be interpreted as  the ``path-connected components'' functor on convenient categories of topological spaces.  In particular, if $\E$ is a 2-value topos the supports of which split and is furthermore assumed to be precohesive over a boolean base, then the passage from $\E$ to $\E_p$ is naturally described in terms of explicit homotopies---as is the internal notion of contractible space. 
\end{abstract}
\subjclass[2020]{Primary 18B25; Secondary 55U40}
\keywords{Topos, connectedness, precohesion}

\maketitle

\tocless\section{Motivation}
\citet{MR3288694} investigates the properties of a space $T$ in order for it to be an unparameterized unit of time in the context of the foundations for Differential Synthetic Geometry. It should be the case that for discrete spaces 
\begin{equation}\label{E:nonbecoming} A^T\cong A,\end{equation}
 i.e. no motion is posible. It ought to be a space so small as to contain only one point yet far from isomorphic to the singleton. In particular, Lawvere considers a reflector $\pi_0$ that preserves finite products and the codomain of which is to consist of spaces $A$ of non-becoming.  And in general, for any other space $X$,  $\pi_0(X^T)\cong\pi_0(X)$ if and only if the pullback $R$ of $ev_0:T^T\to T$ along the unique point $0:1\rightarrow T$ is connected; i.e. $\pi_0(R)=1$.  

Also, \citet{MR2369017} discusses the notion of contractible space in the process of deriving some consequences of his Axiomatic Cohesion. Therein, he suggests the existence of a homotopy category within the definition of extensive quality. A guiding idea is definitely that of homotopies between continuous maps. A topological space is {\em contractible} if its identity map is homotopic to a constant map. 

Intuitively, the category of homotopy classes is thus defined to have the same objects and as arrows functions modulo homotopy, $[X,Y]$. For well-behaved topological spaces, regarding sets of path-connected components as topological spaces with the discrete topology produces an endofunctor $X\mapsto \pi_0(X)$ together with a natural transformation $X\rightarrow \pi_0(X)$. One can obtain the following identification
\begin{equation}\label{E:htoppizero}
[X,Y]\cong \pi_0(Y^X),
\end{equation} 
once $Y^X$ is endowed with a canonical topology (e.g. the compact-open topology on sufficiently nice spaces).  Conversely, one could begin with the natural transformation $1\Rightarrow \pi_0$, and use \eqref{E:htoppizero} as the definition of the equivalence classes of arrows that are {\em homotopic} in some abstract sense.  Marmolejo and Menni \cite{MR3654357} provide a way of formalizing this.

\tocless\section{Main Results}

The main purpose of this report is to propose another method which has the advantage of not requiring to look at enriched categories (cf. \cite{MR3654357}). Starting from a cartesian closed category (CCC) the name of an arrow $f:X\to Y$ is the transpose `$f$'$:1\to Y^X$ of the arrow $f\circ\pi_X:1\times X\to Y$. It provides a way to internalize the arrows: the exponential $Y^X$ `contains' all the arrows in a natural way and there are internal compositions $c: Z^Y\times Y^X\rightarrow Z^X$ that satisfy all the usual properties.

Define a {\em homotopy theory} on a cartesian closed category $\E$ to be a natural transformation $p:1_\E\Rightarrow\Pi_0:\E\rightarrow\E$  such the functor $\Pi_0$ preserves products and the function 
\[\E(1,p_X):\E(1,X)\Rightarrow\E(1,\Pi_0(X))\]
 is surjective for every $X\in\E$.  
 
 Two arrows $f,g:X\rightarrow Y$ in $\E$ are $p$-\textit{homotopic} to each other if and only if their names $\text{`$f$',`$g$'}:1\rightarrow Y^X$ satisfy
\[
p_{Y^X}\circ\text{`$f$'}=p_{Y^X}\circ\text{`$g$'}.
\]
Denote this equivalence relation by $\sim_p$.
\begin{teoA}\hypertarget{T:TeorA}  For a cartesian closed category $\E$ with a homotopy theory $p:1_\E\Rightarrow\Pi:\E\rightarrow\E$, there is a \textit{homotopic category} $\E_p$ for $\E$ as follows: $\ob(\E_p):=\ob(\E)$ and
\begin{align*}
\E_p(X,Y) &:=\E(X,Y)/{\sim_p}\\
&\cong\E(1,\Pi_0(Y^X)).
\end{align*}
It follows that $\E_p$ is also cartesian closed; and if $\E$ is further assumed to be extensive and distributive, then so is $\E_p$.
\end{teoA}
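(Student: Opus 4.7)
The plan is to verify in order: (a) $\sim_p$ is a congruence on $\E$, so $\E_p$ is a category with the claimed Hom-set formula; (b) cartesian closure of $\E_p$; and (c) extensivity and distributivity when these are assumed of $\E$.

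For step (a), suppose $f\sim_p f':X\to Y$ and $g\sim_p g':Y\to Z$. The name of the composite is $c\circ\langle\text{`$g$'},\text{`$f$'}\rangle:1\to Z^X$, where $c:Z^Y\times Y^X\to Z^X$ is internal composition. Naturality of $p$ at $c$ gives $p_{Z^X}\circ c=\Pi_0(c)\circ p_{Z^Y\times Y^X}$, and product-preservation of $\Pi_0$ rewrites the rightmost morphism as $\langle p_{Z^Y}\circ\pi_1,\,p_{Y^X}\circ\pi_2\rangle$. Therefore $p_{Z^X}\circ\text{`$gf$'}=\Pi_0(c)\circ\langle p_{Z^Y}\circ\text{`$g$'},\,p_{Y^X}\circ\text{`$f$'}\rangle$ depends only on the $\sim_p$-classes of $f$ and $g$, and the identity axioms are immediate. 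For the Hom-set formula, the exponential transpose $\E(X,Y)\cong\E(1,Y^X)$ carries $\sim_p$ to the relation of being equalized by post-composition with $p_{Y^X}$; surjectivity of $\E(1,p_{Y^X})$, built into the definition of a homotopy theory, then identifies the resulting quotient with $\E(1,\Pi_0(Y^X))$.

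For step (b), keep the products, terminal, and exponentials of $\E$. By (a) and product-preservation of $\Pi_0$: $\E_p(Z,X\times Y)\cong\E(1,\Pi_0(X^Z\times Y^Z))\cong\E_p(Z,X)\times\E_p(Z,Y)$; $\E_p(X,1)\cong\E(1,\Pi_0(1))\cong\E(1,1)$ is a singleton; and $\E_p(X\times Y,Z)\cong\E(1,\Pi_0((Z^Y)^X))\cong\E_p(X,Z^Y)$, the last using $Z^{X\times Y}\cong(Z^Y)^X$ in $\E$.

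For step (c), the exponential law $Z^{X+Y}\cong Z^X\times Z^Y$, valid in any CCC with coproducts, combined with (a), gives $\E_p(X+Y,Z)\cong\E_p(X,Z)\times\E_p(Y,Z)$, so the coproduct of $\E$ remains a coproduct in $\E_p$. Distributivity then transfers because the isomorphism $(X+Y)\times Z\cong X\times Z+Y\times Z$ of $\E$ already lives at the object level between objects that are still the correct products and coproducts in $\E_p$. For full extensivity, I would check the equivalence $\E_p/A\times\E_p/B\simeq\E_p/(A+B)$ by transferring the analogous equivalence in $\E$ across the identity-on-objects quotient functor $Q:\E\to\E_p$. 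The main obstacle—and the step I expect to require the most care—is that $\E_p$ is not known a priori to inherit arbitrary finite limits from $\E$; that pullbacks along coproduct injections exist in $\E_p$ and are computed by their counterparts in $\E$ must therefore be verified directly, using the Hom-set formula from (a) rather than any generic limit-transfer principle.
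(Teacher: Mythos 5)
Your steps (a) and (b) match the paper's argument in substance: the congruence property is established there with exactly the diagram you describe (internal composition $c$, naturality of $p$, and product preservation of $\Pi_0$), and cartesian closure is obtained from the same two isomorphisms $(X\times Y)^Z\cong X^Z\times Y^Z$ and $Z^{X\times Y}\cong(Z^Y)^X$; the paper phrases this as showing that pairing and transposition descend to $\sim_p$-classes rather than as a chain of hom-set bijections, but the content is the same. Your treatment of coproducts via $Z^{X+Y}\cong Z^X\times Z^Y$ is likewise the paper's Proposition \ref{P:EdistimpliesEhso}, and distributivity is then automatic because $\E_p$ is cartesian closed with finite coproducts.

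The one genuine gap is in the extensivity step. The obstacle you flag --- that pullbacks along coproduct injections must be shown to exist in $\E_p$ and be computed as in $\E$ --- is a detour the paper does not take, and it would in fact be delicate: there is no evident way to lift a $\sim_p$-identification of cones through a pullback, so this route is not known to go through. The paper instead works with the formulation of extensivity as the canonical functor $\E/X\times\E/Y\rightarrow\E/(X+Y)$ being an equivalence, which mentions no pullbacks at all. Transferring this to the induced functor on $\E_p$ reduces to three checks: essential surjectivity and fullness are inherited immediately from the corresponding properties in $\E$ (the functor is the identity on objects and surjective on hom-classes), and the only nontrivial point is faithfulness, i.e.\ that $[h+k]=[r+s]$ forces $[h]=[r]$ and $[k]=[s]$. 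The paper settles this by composing with the coproduct injections and invoking the fact that injections are monic in a distributive category (Carboni--Lack--Walters, Proposition 3.3), which applies to $\E_p$ by the coproduct and distributivity steps already completed. Your proposal identifies the right target but leaves precisely this faithfulness verification unaddressed, and the pullback-based strategy you sketch in its place is not the one that works.
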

Denote by $H_p$ the obvious functor from $\E$ to $\E_p$. The authors \cite{RS2023} show that in a topos $\E$ that satisfies a Nullstellensatz (NS, ``any non-initial object has points'') and the WDQO postulate (``For every object there exists a minimal decidable object $p_X:X\to \Pi(X)$ that uniquely factors arrows to $1+1$''),  $\dec(\E)$ is an exponential ideal of $\E$: there is an adjunction $\Pi\dashv \mathcal I:\E\Rightarrow \dec(\E)$, where $\mathcal I$ is the inclusion functor and $\Pi$ preserves finite products. 
\begin{teoB}\hypertarget{T:TeorB} If $\E$ is a topos satisfying NS and WDQO, then the unit $p:1\Rightarrow\mathcal{I}\Pi$ is a homotopy theory for $\E$ and there is an adjunction
\[\xymatrix{
\E_p\ar@/_1pc/[d]_{q_{!}}\ar@<-1ex>@{}[d]^{\dashv} \\
\dec(\E)\ar@/_1pc/[u]_{q^\ast},
}\]
with $q_{!}H_p=\Pi$ and $q^\ast=H_p\mathcal I$. In other words, that $\dec(\E)$ is an exponential ideal of $\E_p$.
\end{teoB}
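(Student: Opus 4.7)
The plan is first to verify both axioms. Preservation of finite products by $\mathcal{I}\Pi$ is automatic: $\mathcal{I}$ is a right adjoint (so preserves limits) and $\Pi$ preserves finite products by the results recalled from \cite{RS2023}. For surjectivity of $\E(1,p_X):\E(1,X)\to\E(1,\mathcal{I}\Pi X)$, fix a point $c:1\to\mathcal{I}\Pi(X)$ and form the pullback $X_c\hookrightarrow X$ of $c$ along $p_X$. Since $\Pi(X)$ is decidable, the point $c$ becomes a decidable subobject of $\Pi(X)$ with classifier $\chi_c:\Pi(X)\to 1+1$; were $X_c=0$, the composite $\chi_c\circ p_X$ would factor through the right summand and hence agree with the constant ``right'' map $\bar 0\circ p_X$, and the uniqueness of factorisation through $p_X$ provided by WDQO would force $\chi_c=\bar 0$, contradicting $\chi_c\circ c\neq\bar 0\circ c$. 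Hence $X_c\neq 0$, and NS supplies a point of $X_c$ lifting $c$ through $p_X$.

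\textbf{Step 2 (building the reflector).} Define $q_!X:=\Pi X$ on objects and $q_!([f]):=\Pi(f)$ on morphisms, so that $q_!H_p=\Pi$ holds by construction. The only thing to check is well-definition, i.e.\ that $f\sim_p g$ implies $\Pi(f)=\Pi(g)$. Apply the product-preserving functor $\mathcal{I}\Pi$ to the CCC identity $f=\mathrm{ev}_{Y,X}\circ(\text{`$f$'}\times 1_X)$ (modulo $X\cong 1\times X$) to obtain
\[
\mathcal{I}\Pi(f)=\mathcal{I}\Pi(\mathrm{ev}_{Y,X})\circ\bigl(\mathcal{I}\Pi(\text{`$f$'})\times 1_{\mathcal{I}\Pi(X)}\bigr),
\]
and then combine naturality of $p$ at $\text{`$f$'}:1\to Y^X$ with the identity $p_1=1_1$ (forced by $\mathcal{I}\Pi(1)=1$) to get $\mathcal{I}\Pi(\text{`$f$'})=p_{Y^X}\circ\text{`$f$'}$. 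Hence $\mathcal{I}\Pi(f)$ is completely determined by $p_{Y^X}\circ\text{`$f$'}$, and faithfulness of $\mathcal{I}$ upgrades the resulting equality to $\Pi(f)=\Pi(g)$. Functoriality of $q_!$ is then inherited from that of $\Pi$.

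\textbf{Step 3 (the adjunction and the exponential-ideal property).} With $q^*:=H_p\mathcal{I}$, the adjunction $q_!\dashv q^*$ is exhibited through the natural bijection
\[
\E_p(X,\mathcal{I}D)\;\cong\;\E(X,\mathcal{I}D)\;\cong\;\dec(\E)(\Pi X,D),
\]
for $X\in\E$ and $D\in\dec(\E)$; the second isomorphism is the hypothesised $\Pi\dashv\mathcal{I}$. For the first---which I expect to be the main technical obstacle---surjectivity of the restriction of $H_p$ holds by construction, while injectivity relies on the fact that $\dec(\E)$ is already an exponential ideal of $\E$ (\cite{RS2023}): this forces $(\mathcal{I}D)^X\in\dec(\E)$, so $p_{(\mathcal{I}D)^X}$ is an isomorphism, and the hypothesis $p_{(\mathcal{I}D)^X}\circ\text{`$f$'}=p_{(\mathcal{I}D)^X}\circ\text{`$g$'}$ collapses to $\text{`$f$'}=\text{`$g$'}$, whence $f=g$. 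Since products in $\E_p$ are inherited from $\E$ by Theorem A and $\Pi$ preserves finite products, the reflector $q_!$ preserves finite products; the standard criterion for reflective subcategories then upgrades $q^*:\dec(\E)\hookrightarrow\E_p$ to an exponential-ideal inclusion, completing the proof.
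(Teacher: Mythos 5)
Your proposal is correct and its core is the same as the paper's: Step 2 and the injectivity argument in Step 3 together reproduce exactly the paper's Proposition \ref{P:Homtopicimpliessamepieces} (homotopic arrows have equal images under $\Pi$, and are equal when the codomain is decidable), which is the only substantive point the paper checks before transporting the counit of $\Pi\dashv\mathcal{I}$ to an adjunction $q_!\dashv q^*$. The only differences are presentational---you package the adjunction as a hom-set bijection rather than reusing the counit, and you spell out the surjectivity of $\E(1,p_X)$ in Step 1, which the paper delegates to the cited results of \cite{RS2023}.
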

The identity functor is clearly a homotopy theory.  In toposes that satisfy NS $!_X:X\rightarrow 1$ is a homotopy theory on $\E$. Moreover, in such toposes, any local operator\footnote{These are also called (Lawvere-Tierney) topologies.} $j:\Omega\to\Omega$ induces a homotopy theory $q_j$ by considering the largest $j$-separable quotient (which always exists, see Appendix \ref{A:EQM}).

In the topological context, being homotopic is given explicitly by way of homotopies, continuous maps from the domain times an interval to the codomain. In general, the relationship $\sim_p$ does not explicitly have such a description. One direction is always true, provided that one substitutes ``an interval'' with ``a connected object''.  This replacement is intuitively natural recalling that $\Pi_0$ is to abstract ``path-connected components''.
\begin{teoC}\hypertarget{T:TeorC}  For any cartesian closed $\E$ with a homotopy theory $p$, two arrows $f, g:X\rightarrow Y$ are $p$-homotopic if there is a connected object $A$ with two global elements $a,b:1\rightarrow A$ and an arrow $h:A\times X\rightarrow Y$ such that the following diagrams commute:
\[\xymatrix{
X\ar[r]_(.4){\langle a!,1\rangle}\ar@/^1pc/[rr]^f & A\times X\ar[r]_(.6)h & Y \\
X\ar[r]^(.4){\langle b!,1\rangle}\ar@/_1pc/[rr]_g & A\times X\ar[r]^(.6)h & Y.
}\]
Furthermore, in the case when $\E$ is a topos satisfying NS and WDQO, the converse holds for the induced homotopy theory. 
\end{teoC}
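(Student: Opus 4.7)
The plan divides along the two directions of the ``if''. For the forward direction, which holds in any CCC with a homotopy theory, I would use naturality of $p$ together with connectedness of $A$. Let $\tilde h:A\to Y^X$ be the exponential transpose of $h$. The two commuting diagrams transpose, by the universal property of the exponential, to $\tilde h\circ a=\text{`$f$'}$ and $\tilde h\circ b=\text{`$g$'}$. Naturality of $p$ at $\tilde h$ yields $p_{Y^X}\circ\tilde h=\Pi_0(\tilde h)\circ p_A$; since $\Pi_0(A)\cong 1$, both $p_A\circ a$ and $p_A\circ b$ coincide with $\mathrm{id}_1$, so
\[p_{Y^X}\circ\text{`$f$'}=\Pi_0(\tilde h)\circ p_A\circ a=\Pi_0(\tilde h)\circ p_A\circ b=p_{Y^X}\circ\text{`$g$'},\]
which is exactly $f\sim_p g$.

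For the converse, specialised to a topos $\E$ satisfying NS and WDQO with the induced homotopy theory $\Pi_0=\mathcal I\Pi$, the plan is to realise $A$ as the connected component of $\text{`$f$'}$ (equivalently of $\text{`$g$'}$) inside $Y^X$. Set $q:=p_{Y^X}\circ\text{`$f$'}=p_{Y^X}\circ\text{`$g$'}:1\to\mathcal I\Pi(Y^X)$ and form the pullback
\[\xymatrix{A\ar[r]^\pi\ar[d] & Y^X\ar[d]^{p_{Y^X}}\\ 1\ar[r]_-{q} & \mathcal I\Pi(Y^X).}\]
The defining equalities give, by the universal property, unique $a,b:1\to A$ with $\pi\circ a=\text{`$f$'}$ and $\pi\circ b=\text{`$g$'}$. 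Define $h:=\ev\circ(\pi\times 1_X):A\times X\to Y$; transposing, $h\circ\langle a!,1_X\rangle$ has name $\pi\circ a=\text{`$f$'}$ and so equals $f$, and similarly $h\circ\langle b!,1_X\rangle=g$.

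The main obstacle is then to verify that $A$ is connected, that is $\Pi(A)\cong 1$, and this is the only step where NS and WDQO are essentially used. The plan is to exploit that the base $\mathcal I\Pi(Y^X)$ of the defining pullback is decidable: the unit $p_{\mathcal I\Pi(Y^X)}$ is an isomorphism, and $\Pi$ preserves finite products and the terminal object (Theorem B), so applying $\Pi$ to the defining square reduces its right-hand leg to the identity of $\Pi(Y^X)$. Provided $\Pi$ preserves this pullback, $\Pi(A)$ is forced to be the pullback in $\dec(\E)$ of $1\xrightarrow{q}\Pi(Y^X)\xleftarrow{=}\Pi(Y^X)$, namely $1$. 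The crucial technical input is therefore the preservation by $\Pi$ of pullbacks of this form over decidable bases in the NS${+}$WDQO setting, which I would draw from the description of $\Pi$ in \cite{RS2023}; I expect that this preservation---equivalently, that every arrow $A\to 1+1$ be constant---is the only nontrivial piece of the argument, the rest being formal manipulations of the adjunction $\Pi\dashv\mathcal I$ and the exponential transpose.
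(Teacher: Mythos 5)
Your proposal is correct and follows essentially the same route as the paper: the forward direction via transposing $h$ and using naturality of $p$ together with $\Pi_0(A)\cong 1$, and the converse via the pullback of $p_{Y^X}$ along the common composite $p_{Y^X}\circ\text{`$f$'}$, with $a,b$ the corestrictions of the names and $h=\ev\circ(j\times 1)$. The connectedness of that fibre, which you correctly flag as the only nontrivial input, is exactly what the paper also outsources to \cite{RS2023} (Proposition 2.1 and Theorem 2.4 there), so your deferral is legitimate rather than a gap.
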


The following result provides an explicit necessary condition to Lawvere's unparameterized unit of time $T$. For a homotopy theory $p$ on a cartesian closed $\E$ say a pointed object $A$ is {\em $p$-contractible} whenever the identity map $1_A$ is $p$-homotopic to the constant map $a!$ for some point $a:1\rightarrow A$.  

\begin{teoD}\hypertarget{T:TeorD}  
Let $\E$ be a cartesian closed category with a homotopy theory $p$. Let $A\in\E$ and for any $X$ let $\sigma_X:X\to X^A$ the transpose of the projection $X\times A\to X$. The following are equivalent.

\begin{enumerate}
\item  The object $A$ is $p$-contractible. 
\item For every object $X\in\E$, $\Pi_0(\sigma_X):\Pi_0(X)\rightarrow\Pi_0(X^A)$ is an isomorphism.
\end{enumerate}
Moreover, when $\E$ is a topos satisfying NS, WDQO and McLarty's DSO postulate (``There exists a unique decidable subobject for any given object containing all its points.'') and $p$ is the associated homotopy theory, they are also equivalent to the following.
\begin{enumerate}
\setcounter{enumi}{2}
\item For every $X\in\E$, $A^X$ is connected. 
\item The object $A$ has a point and $A^A$ is connected.
\end{enumerate}
\end{teoD}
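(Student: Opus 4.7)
My plan is first to establish the general equivalence $(1) \Leftrightarrow (2)$ assuming only that $\E$ is cartesian closed, and then to close $(1) \Leftrightarrow (3) \Leftrightarrow (4)$ under the extra topos hypotheses. Throughout I would rely on the following observation: since $\Pi_0$ preserves the terminal, $p_1 = 1_1$, and hence by naturality $p_{Y^X}\circ \text{`$f$'} = \Pi_0(\text{`$f$'})$ for any $f: X\to Y$; recovering $f$ from its name as $\mathrm{ev}\circ(\text{`$f$'}\times 1_X)$ and applying the product-preserving $\Pi_0$ then shows that $f \sim_p g$ implies $\Pi_0(f) = \Pi_0(g)$. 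This yields a factorization $\Pi_0 = \bar \Pi_0 \circ H_p$ through the quotient category $\E_p$.

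For $(1)\Rightarrow(2)$: fix a point $a: 1 \to A$ with $1_A \sim_p a!$ and set $\tau_X := \mathrm{ev}_a: X^A \to X$. The identity $\tau_X \circ \sigma_X = 1_X$ already shows $\Pi_0(\sigma_X)$ is split monic. For the reverse composite I would introduce the internal precomposition morphism $\mu_X: A^A \to (X^A)^{X^A}$, namely the transpose of the CCC composition arrow $A^A \times X^A \to X^A$, $(\phi,\psi) \mapsto \psi \circ \phi$. A direct unravelling yields $\mu_X\circ\text{`$1_A$'} = \text{`$1_{X^A}$'}$ and $\mu_X \circ \text{`$a!$'} = \text{`$\sigma_X \circ \tau_X$'}$, and naturality of $p$ along $\mu_X$ then transports the hypothesis $p_{A^A}\circ \text{`$1_A$'} = p_{A^A}\circ \text{`$a!$'}$ to $\sigma_X \circ \tau_X \sim_p 1_{X^A}$, so $\Pi_0(\sigma_X)\circ \Pi_0(\tau_X) = 1$. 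For $(2)\Rightarrow(1)$: specialize (2) to $X = A$. The name $\text{`$1_A$'}$ produces a global element $p_{A^A}\circ \text{`$1_A$'}$ of $\Pi_0(A^A)$, which pulls back along the iso $\Pi_0(\sigma_A)$ to a global element of $\Pi_0(A)$; by surjectivity of $\E(1, p_A)$ this lifts to a point $a: 1 \to A$, and the identity $\sigma_A\circ a = \text{`$a!$'}$ combined with the naturality of $p$ forces $1_A \sim_p a!$.

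For the topos case I would close the cycle $(1)\Rightarrow(3)\Rightarrow(4)\Rightarrow(1)$. The step $(1)\Rightarrow(3)$ is where I use Theorem A seriously: $p$-contractibility says that $!:A \to 1$ is invertible in $\E_p$ with inverse (the class of) $a$, so by cartesian closedness of $\E_p$ one has $A^X \cong 1^X = 1$ in $\E_p$; applying the factorization $\bar \Pi_0$ gives $\mathcal{I}\Pi(A^X) \cong 1$ in $\E$, and full faithfulness of $\mathcal{I}$ then descends this to $\Pi(A^X) \cong 1$ in $\dec(\E)$. The step $(3)\Rightarrow(4)$ is routine: set $X=1$ to get $A=A^1$ connected (hence non-initial, hence pointed by NS) and $X=A$ to get $A^A$ connected. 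Finally $(4)\Rightarrow(1)$ is immediate, since $\Pi(A^A) = 1$ forces $p_{A^A}$ to factor through $\mathcal{I}(1) = 1$, so the two names $\text{`$1_A$'}$ and $\text{`$a!$'}$ necessarily agree upon composition with $p_{A^A}$. I expect the main technical obstacle to be the clean computation of the action of $\mu_X$ on the two names in step $(1)\Rightarrow(2)$, which requires careful bookkeeping of CCC transposes; once that is in place, every remaining step reduces to a naturality chase.
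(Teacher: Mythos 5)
Your proposal is correct, but it routes around the paper's argument at two places in a way worth noting. For $(1)\Leftrightarrow(2)$ the paper does not construct a homotopy inverse at all: it observes that $\Pi_0(\sigma_X)\cong\Pi_0(X^!)$, identifies $\E(1,\Pi_0(X^{(-)}))$ with the represented functor $\E_p(-,X)$ (Proposition \ref{P:Epphinatural}), and invokes Yoneda to reduce ``$\Pi_0(X^!)$ iso for all $X$'' to ``$!:A\to 1$ is invertible in $\E_p$,'' which is the definition of $p$-contractibility since $H_p$ is full. Your explicit construction of $\tau_X=\ev^a_X$ and the transport of the contracting homotopy through the internal precomposition $\mu_X$ proves the same thing by exhibiting the inverse of $\Pi_0(\sigma_X)$ concretely; the identities $\mu_X\circ\text{`$1_A$'}=\text{`$1_{X^A}$'}$ and $\mu_X\circ\text{`$a!$'}=\text{`$\sigma_X\circ\tau_X$'}$ do hold (they are the internal unit law and the pointless form of \eqref{E:aadequalssigmaa}), so the approach is sound, just heavier on transpose bookkeeping. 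More interestingly, for $(1)\Rightarrow(3)$ the paper (Theorem \ref{T:contractibleiffAtotheX}) argues that $\E_p(X,A)=1$ forces $\Pi(A^X)$ to be a decidable object with a single, necessarily $\neg\neg$-dense point and then quotes several results of \cite{RS2023} to conclude it is terminal; your argument --- $A\cong 1$ in $\E_p$ implies $A^X\cong 1^X\cong 1$ in $\E_p$ by cartesian closedness (Theorem A), then apply the factorization $\Pi_0=\bar\Pi_0\circ H_p$ --- bypasses that machinery entirely and in fact shows $(1)\Rightarrow(3)$ in an arbitrary CCC with a homotopy theory. The topos hypotheses then survive only where they are genuinely needed, namely in your $(3)\Rightarrow(4)$ (NS plus $\Pi 0=0$ to extract a point of $A$ from its connectedness); your $(4)\Rightarrow(1)$ coincides with the paper's Corollary \ref{C:contractibleiffasim1}. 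Two cosmetic cautions: $p_1$ is only canonically an isomorphism onto $\Pi_0(1)$, not literally $1_1$, and your preliminary claim that $f\sim g$ implies $\Pi_0(f)=\Pi_0(g)$ (which the paper only records as Proposition \ref{P:Homtopicimpliessamepieces} under NS and WDQO) should be stated with the product-comparison isomorphisms made explicit, since your general-CCC version of $(1)\Rightarrow(3)$ leans on it.
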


\citet{MR2369017} defines a contractible object to be one that satisfies the property (3) in the previous theorem. The authors \cite{RS2023} show that for a topos $\E$ that satisfies NS, WDQO and DSO it follows that $\dec(\E)$ is a topos and that $\E$ is precohesive over $\dec(\E)$. Therefore, in said context, an object is Lawvere contractible if and only if it is $p$-contractible. Lawvere uses the definition of contractibility to define the notion of {\em sufficient cohesion} (``Any object can be embedded in a contractible object'') and proves a version of the following result (see also \citet{MR3263283}). 
\begin{teoE}\hypertarget{T:TeorE}  Let $\E$ be a topos satisfying NS, WDQO and DSO. The following statements are equivalent:
\begin{enumerate}
 \item $\E$ is sufficiently cohesive.
 \item The identity map of the subobject classifier is $p$-homotopic to a constant map; i.e. $\Omega$ is contractible.
 \item There is a bipointed connected object.
 \item Any nonempty $\neg\neg$-sheaf in $\E$ is connected.
\end{enumerate}
\end{teoE}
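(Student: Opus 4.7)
The plan is to establish $(1)\Leftrightarrow(2)\Leftrightarrow(3)$ in one block and $(2)\Leftrightarrow(4)$ in another, leaning on Theorems A, C and D together with the precohesive structure from \cite{RS2023} that makes $\dec(\E)$ a Boolean base for $\E$. The subobject classifier $\Omega$ is the central witness throughout.

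For $(1)\Leftrightarrow(2)$ the key is injectivity of $\Omega$. In the direction $(1)\Rightarrow(2)$, sufficient cohesion provides a mono $\Omega\hookrightarrow K$ into some $p$-contractible $K$; injectivity yields a retraction $K\to\Omega$, and since $\sim_p$ is a congruence (Theorem A) the retract $\Omega$ inherits $p$-contractibility. Conversely, for $(2)\Rightarrow(1)$, any $X$ embeds in $\Omega^X$ via the singleton, and $\Omega^X$ is $p$-contractible by Theorem D because $(\Omega^X)^Y=\Omega^{X\times Y}$ is connected for every $Y$.

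For $(2)\Rightarrow(3)$, Theorem D applied with $A=X=\Omega$ gives $\Pi_0\Omega\cong\Pi_0(\Omega^\Omega)=1$, making $\Omega$ connected; bipointedness follows from $\top\neq\bot$ in any non-degenerate topos. For the reverse $(3)\Rightarrow(2)$ I would give an explicit homotopy via Theorem C. Given bipointed connected $B$ with distinct points $0,1:1\to B$, let $\chi_1:B\to\Omega$ classify $1:1\hookrightarrow B$, so that $\chi_1\circ 0=\bot$ and $\chi_1\circ 1=\top$, and define $h:B\times\Omega\to\Omega$ internally by $h(b,p)=\chi_1(b)\vee p$. Then $h\circ\langle 0\cdot!_\Omega,1_\Omega\rangle=1_\Omega$ and $h\circ\langle 1\cdot!_\Omega,1_\Omega\rangle=\top\cdot!_\Omega$, so Theorem C yields $1_\Omega\sim_p\top\cdot!_\Omega$.

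For $(2)\Leftrightarrow(4)$ I would use a retract chain. The direction $(4)\Rightarrow(3)\Rightarrow(2)$ is immediate since $\Omega_{\neg\neg}$ is a nonempty $\neg\neg$-sheaf with two distinct points $\top,\bot$, so (4) makes it bipointed connected. For the harder $(2)\Rightarrow(4)$, let $S$ be a nonempty $\neg\neg$-sheaf. The sheaf subtopos $\sh_{\neg\neg}(\E)$ is Boolean and NS transfers to it since $1$ and $0$ are sheaves with the same global points in $\E$ and in $\sh_{\neg\neg}(\E)$, so the singleton mono $S\hookrightarrow\Omega_{\neg\neg}^S$ is split in $\sh_{\neg\neg}(\E)$ by a retraction built from a chosen point of $S$, and full faithfulness transports this splitting to $\E$. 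Meanwhile $\Omega_{\neg\neg}$ is a retract of $\Omega$ in $\E$ via the factorization $\Omega\twoheadrightarrow\Omega_{\neg\neg}\hookrightarrow\Omega$ of $\neg\neg$, so $\Omega_{\neg\neg}^S$ is a retract of $\Omega^S$. By (2) and Theorem D, $\Omega^S$ is $p$-contractible, so its retracts $\Omega_{\neg\neg}^S$ and $S$ are $p$-contractible, whence $S$ is connected. The main conceptual step is this interplay between the two toposes: Booleanness of $\sh_{\neg\neg}(\E)$ splits the sheaf singleton in the sheaf topos, while the retract $\Omega_{\neg\neg}\hookrightarrow\Omega$ lifts the contractibility of $\Omega^S$ to that of $\Omega_{\neg\neg}^S$ in $\E$; the remainder is a formal application of Theorems A, C and D.
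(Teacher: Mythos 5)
Your argument is correct, but it takes a genuinely different route from the paper's, most notably for condition (4). For $(1)\Leftrightarrow(2)\Leftrightarrow(3)$ the paper (Lemma \ref{L:equivalencesforSC}) imports Lawvere's Proposition~4 for ``sufficiently cohesive iff $\Omega$ is connected'' and only adds the explicit homotopy $\wedge\circ(\chi_a\times 1)$ for $(3)\Rightarrow(2)$; you instead reprove Lawvere's equivalence from scratch, using injectivity of $\Omega$ to split the mono into a contractible object and descend contractibility along the retraction, and the singleton $X\rightarrowtail\Omega^X$ together with \hyperlink{T:TeorD}{Theorem D} for the converse --- a self-contained version of the same circle of ideas. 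Your join-based homotopy $h(b,q)=\chi_1(b)\vee q$ is the mirror image of the paper's meet-based one; note that, exactly as in the paper, the step $\chi_1\circ 0=\bot$ is not automatic from $0\neq 1$ and needs the two-valuedness that NS forces on subobjects of $1$, so you should say so. The real divergence is in (4): the paper proves $(1)\Leftrightarrow(4)$ by extending a two-valued map on $\Gamma K$ along the $\neg\neg$-dense mono $\gamma_K:\Gamma K\rightarrowtail K$ into a putatively disconnected sheaf and contradicting $\Pi K=1$, and uses $\Lambda 2$ for the converse; you instead split the singleton $S\rightarrowtail\Omega_{\neg\neg}^S$ in the Boolean topos $\sh_{\neg\neg}(\E)$ (using a point of $S$ supplied by NS), exhibit $\Omega_{\neg\neg}^S$ as a retract of the contractible $\Omega^S$, and transport $p$-contractibility along retracts, while for the converse $\Omega_{\neg\neg}$ itself serves as the bipointed sheaf, avoiding any appeal to $\Lambda$. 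Your route actually yields the stronger conclusion that every nonempty $\neg\neg$-sheaf is \emph{contractible}, not merely connected, at the cost of invoking standard but here-unproved facts (Booleanness of $\sh_{\neg\neg}(\E)$, that its exponentials and power objects are computed in $\E$, and that complemented subobjects induce coproduct decompositions), whereas the paper's dense-extension argument stays entirely inside $\E$ and its internal logic. Both decompositions close the cycle of implications, so the proposal stands.
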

In doing so, he proves that this condition is incompatible with being a {\em quality type} (``having connected components with exactly one point each''). If the Law of Excluded Middle is valid in one's metalogic, one has an actual dichotomy in the setting of precohesion over decidables in the presence of NS:  A non degenerate topos $\E$ that satisfies NS, WDQO and DSO is either sufficiently cohesive or a quality type but not both.

Back to the context of Synthetic Differential Geometry, these techniques provide an explicit verification of \eqref{E:nonbecoming} when $T$ any connected object and $A$ decidable; and of Lawvere's Proposition 1 in \cite{MR3288694}: if there is a retraction from $T^T$ to $R$ then $T$ satisfies property (2) of \hyperlink{T:TeorD}{Theorem D} if and only if $R$ is connected (and in fact contractible)---and thus there $\E$ must be sufficiently cohesive.

\subsection*{Aknowledgements.} The authors acknowledge Omar Antol\'in for his permanent support and for several meaningful conversations which ultimately lead to some of the results of this report.

\setcounter{tocdepth}{1}
\tableofcontents



\section{Elementary observations about cartesian closed categories}
 To set notation and to recall some basic features, let $\E$ be a cartesian closed category, axiomatically endowed with the adjunction
\[(\,\cdot\,)\times A \dashv (\,\cdot\,)^A.\] 
 Denote its evaluation map by $\ev_X^A:X^A\times A\to X$.  For any $A$ fixed, let $\sigma_X^A:X\rightarrow X^A$ denote the transpose of $\pi_X:X\times A\to X$. It is natural in $X$.  In the case $A=1$, $\sigma_X^1:X\rightarrow X^1$ is an isomorphism, and its inverse is
\[\xymatrix{
X^1\ar[r]_(.4){\pi_{X^1}^{-1}} & X^1\times 1\ar[r]_(.63){\ev_X^1} & X.
}\]
If $X,A\in\E$, then $\sigma_X^A:X\rightarrow X^A$ is naturally isomorphic to $X^{!}:X^1\rightarrow X^A$ in the sense that
\begin{equation}\label{E:Xtotheuniquesigma}
X^!\circ\sigma_X^1=\sigma_X^A.
\end{equation}
The name `$f$'$:1\to Y^X$ of an arrow $f:X\to Y$ is the transpose of the arrow $f\circ\pi_X:1\times X\to Y$.  The maps $\sigma$ assign to any point the name of the corresponding constant function: If $a:1\rightarrow A$ is a point of $A\in\E$ and $B$ is arbitrary, then 

\begin{equation}\label{E:aadequalssigmaa}
\sigma_A^B\circ a=\text{`$a\circ!_B$'}.
\end{equation}
For a point $t:1\rightarrow T$ and an object $X\in\E$, define $\ev^t_X:X^T\rightarrow X$ as the composite
\begin{equation}\label{E:evt}
\xymatrix{
X^T\ar[r]^(.4){\langle 1,!\rangle} & X^T\times 1\ar[r]^{1\times t} & X^T\times T\ar[r]^(.6){\ev_X^T} & X.
}
\end{equation}
It is naturally isomorphic to $X^t:X^T\to X^1$:
\begin{equation}
X^t=\sigma_X^1\circ \ev^t_X,
\end{equation}
which justifies $X^t$ being considered as, or even called, evaluation at $t$. Also, since $1=!\circ t$, 
\begin{equation}
\ev^t_X\circ\sigma_X^T=1,
\end{equation}
which proves that $\sigma_X^T$ is split monic. 

The internal composition (see Section 6.3 in \citet{MR1182992}) $c:Z^Y\times Y^X\rightarrow Z^X$ is the transpose of
\[\xymatrix{
(Z^Y\times Y^X)\times X\ar[r]^\cong & Z^Y\times(Y^X\times X)\ar[r]^(.6){1\times\ev} & Z^Y\times Y\ar[r]^(.6)\ev & Z,
}\]
and makes the following diagram commute:
\begin{equation}\label{E:internalcomp}
\begin{gathered}
\xymatrix{
1\ar[dr]^{\text{`$g\circ f$'}}\ar[d]_{\langle\text{`$g$'},\text{`$f$'}\rangle} & \\
Z^Y\times Y^X\ar[r]_(.6)c & Z^X
}
\end{gathered}
\end{equation}
for every arrow $f:X\rightarrow Y$ and every arrow $g:Y\rightarrow Z$ in $\E$. If $f:X\rightarrow Y$ and $g:Y\rightarrow Z$ are arrows in $\E$, then 
\begin{equation}
g^X\circ\text{`$f$'}=\text{`$g\circ f$'},
\end{equation}
and
\begin{equation}
Z^f\circ\text{`$g$'}=\text{`$g\circ f$'}. 
\end{equation}
If $\E$ is distributive, then there is a natural isomorphism $\alpha:Z^{X+Y}\to Z^X\times Z^Y$ for every $X,Y,Z\in\E$ such that 
\begin{equation}\label{E:distributiveimpliestothesum}
\alpha\circ\hat{f}=\langle\widehat{f_1},\widehat{f_2}\rangle
\end{equation}
for any $f:A\times(X+Y)\rightarrow Z$, where the arrows $f_1$ and $f_2$ are defined by the following commutative diagram:
\[\xymatrix{
A\times X+A\times Y\ar[dr]^(.55){[f_1,f_2]}\ar[d]_\cong & \\
A\times(X+Y)\ar[r]_(.65)f & Z.
}\]
\section{Main definitions and first consequences}
Let $\E$ be a cartesian closed category, a {\em homotopy theory} on $\E$ is  a natural transformation $p:1_\E\Rightarrow\Pi_0:\E\rightarrow\E$ such that the functor $\Pi_0$ preserves finite products and the function $\E(1,p_X):\E(1,X)\Rightarrow\E(1,\Pi_0(X))$ is surjective for every $X\in\E$. In particular, it follows that the next diagram commutes:
\begin{equation}\label{E:homprod}
\begin{gathered}
\xymatrix{
X\times Y\ar[r]^{p_{X\times Y}}\ar[dr]_{p_X\times p_Y} & \Pi_0(X\times Y)\ar[d]^\cong \\
& \Pi_0(X)\times\Pi_0(Y).
}
\end{gathered}\end{equation}
 Two arrows $f,g:X\rightarrow Y$ in $\E$ are said to be \textit{homotopic} to each other if and only if their names $\text{`$f$',`$g$'}:1\rightarrow Y^X$ satisfy
\[
p_{Y^X}\circ\text{`$f$'}=p_{Y^X}\circ\text{`$g$'}.
\]
Denote this equivalence relation by $\sim$.

Define the \textit{homotopy category} $\E_p$ for $\E$ as follows: $\ob(\E_p):=\ob(\E)$ and
\begin{equation}
\E_p(X,Y) :=\E(X,Y)/{\sim}
\end{equation}
Composition is defined as the equivalence class of the composition of representatives. To see that this is indeed well defined, let $f,g\in\E(X,Y)$ with $f\sim g$ and $h,k\in\E(Y,Z)$ with $h\sim k$. Then, by \eqref{E:internalcomp} and \eqref{E:homprod}, the following diagram commutes:
\[\xymatrix{
1\ar@<.5ex>[rr]^(.4){\langle\text{`$h$'},\text{`$f$'}\rangle}\ar@<-.5ex>[rr]_(.4){\langle\text{`$k$'},\text{`$g$'}\rangle}\ar@/_1pc/@<.5ex>[drr]^(.55){\text{`$h\circ f$'}}\ar@/_1pc/@<-.5ex>[drr]_(.55){\text{`$k\circ g$'}} & & Z^Y\times Y^X\ar[rr]^(.45){p_{Z^Y}\times p_{Y^X}}\ar[d]^c & & \Pi_0(Z^Y)\times\Pi_0(Y^X)\ar[d]^{\Pi(c)} \\
& & Z^X\ar[rr]_{p_{Z^X}} & & \Pi_0(Z^X).
}\]
Hence $h\circ f\sim k\circ g$.

Notice that $\E(X,Y)/{\sim}\cong\E(1,Y^X)/{\sim'}$, with $\text{`$f$'}\sim'\text{`$g$'}$ if and only if $p_{Y^X}\circ\text{`$f$'}=p_{Y^X}\circ\text{`$g$'}$. So one has a bijective correspondence
\[\xymatrix{
\E(1,Y^X)/\sim'\ar[r]^\cong & \E(1,\Pi_0(Y^X)).
}\]
\begin{prop}\label{P:Epphinatural}
Let $\E$ be a cartesian closed category with a homotopy theory $p:1\Rightarrow\Pi_0$. For any arrow $\varphi:B\rightarrow A$  in $\E$, the natural transformation $\E_p(\varphi,-):\E_p(A,-)\Rightarrow\E_p(B,-)$ can be described as follows: Let $r:X\rightarrow Y$ be an arrow in $\E$, then
\[\xymatrix{
\E(1,\Pi_0(X^A))\ar[rr]^{\E(1,\Pi_0(X^\varphi))}\ar[d]_{\E(1,\Pi_0(r^A))} & & \E(1,\Pi_0(X^B))\ar[d]^{\E(1,\Pi_0(r^B))} &
u\ar@{|->}[r]\ar@{|->}[d] & \Pi_0(X^\varphi)\circ u\ar@{|->}[d]\\
\E(1,\Pi_0(Y^A))\ar[rr]_{\E(1,\Pi_0(Y^\varphi))} & & \E(1,\Pi_0(Y^B)) &
\Pi_0(r^A)\circ u\ar@{|->}[r] & \Pi_0(Y^\varphi\circ r^A)\circ u.
}\]
\end{prop}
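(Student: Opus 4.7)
The plan is to unpack both sides of the claimed description using the natural bijection $\E_p(A,X)\cong \E(1,\Pi_0(X^A))$ that sends a class $[f:A\to X]$ to $u_f := p_{X^A}\circ\text{`$f$'}$. All the required identities (naming, naturality of $p$, and the equations $g^X\circ\text{`$f$'}=\text{`$g\circ f$'}$ and $Z^f\circ\text{`$g$'}=\text{`$g\circ f$'}$) are already recorded in the preceding section, so the argument is essentially a direct verification once these correspondences are written out.

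First I would compute the image of a horizontal arrow. Since $\E_p(\varphi,X)([f])=[f\circ\varphi]$, its image under the bijection is $p_{X^B}\circ\text{`$f\circ\varphi$'}$. Applying $\text{`$f\circ\varphi$'}=X^\varphi\circ\text{`$f$'}$ and then naturality of $p$ at $X^\varphi:X^A\to X^B$ rewrites this as $\Pi_0(X^\varphi)\circ p_{X^A}\circ\text{`$f$'}=\Pi_0(X^\varphi)\circ u_f$, matching the top horizontal arrow (and identically for the bottom one with $X$ replaced by $Y$). An analogous calculation for the vertical arrows uses $r^A\circ\text{`$f$'}=\text{`$r\circ f$'}$ together with naturality of $p$ at $r^A$ to show that $\E_p(A,r)([f])$ corresponds to $\Pi_0(r^A)\circ u_f$.

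Finally, the naturality square commutes because it is the image under $\E(1,\Pi_0(-))$ of a square in $\E$ that commutes by the bifunctoriality identity $Y^\varphi\circ r^A=r^B\circ X^\varphi$ for the bifunctor $(-)^{(-)}:\E^{\mathrm{op}}\times\E\to\E$; combined with functoriality of $\Pi_0$, both routes through the square send $u_f$ to $\Pi_0(Y^\varphi\circ r^A)\circ u_f=\Pi_0(r^B\circ X^\varphi)\circ u_f$, in agreement with the element-wise display on the right. There is no substantive obstacle here: the statement is a pure bookkeeping translation of the bijection $\E_p(A,X)\cong\E(1,\Pi_0(Y^X))$ established just before the proposition, and the only care needed is matching each expression with the correct identity from the preceding section.
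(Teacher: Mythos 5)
Your proposal is correct and follows essentially the same route as the paper's own proof: identify $\E_p(A,X)$ with $\E(1,\Pi_0(X^A))$ via $[f]\mapsto p_{X^A}\circ\text{`$f$'}$, use the naming identities and naturality of $p$ to compute the horizontal and vertical maps, and conclude from the bifunctoriality identity $Y^\varphi\circ r^A=r^B\circ X^\varphi$ together with functoriality of $\Pi_0$. The only nit is the typo $\E(1,\Pi_0(Y^X))$ in your closing sentence, which should read $\E(1,\Pi_0(X^A))$.
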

\begin{proof} Since $Y^\varphi\circ r^A=r^B\circ X^\varphi$,
\[\xymatrix{
\E_p(A,X)\ar[r]^{\E_p(\varphi,X)}\ar[d]_{\E_p(A,r)} & \E_p(B,X)\ar[d]^{\E_p(B,r)} &
\text{`$f$'}\ar@{|->}[r]\ar@{|->}[d] & X^\varphi\circ\text{`$f$'}\ar@{|->}[d] \\
\E_p(A,Y)\ar[r]_{\E_p(\varphi,Y)} & \E_p(B,Y) &
r^A\circ\text{`$f$'}\ar@{|->}[r] & Y^\varphi\circ r^A\circ\text{`$f$'}\ar@{}|{=}[r] & r^B\circ X^\varphi\circ\text{`$f$'}
}\]
where $f\in\E_p(A,X)$. Note that the following commutative diagram:
\[\xymatrix{
1\ar[r]^{\text{`$f$'}} & X^A\ar[r]^{p_{X^A}}\ar[d]_{r^A} & \Pi_0(X^A)\ar[d]^{\Pi_0(r^A)} \\
& Y^A\ar[d]_{Y^\varphi}\ar[r]_{p_{Y^A}} & \Pi_0(Y^A)\ar[d]^{\Pi_0(Y^\varphi)} \\
& Y^B\ar[r]_{p_{Y^B}} & \Pi_0(Y^B).
}\]
The conclusion follows from the functoriality of $\Pi_0$.
\end{proof}

\begin{exem}
Let $\E$ be a cartesian closed category. The identity $1:1_\E\Rightarrow 1_\E$ is a homotopy theory.
\end{exem}
\begin{exem}
If $\E$ has an initial object and if $X\in\E$ has a point for every $X\neq 0$, then $!:1_\E\Rightarrow(-)^0$ is a homotopy theory.
\end{exem}
\begin{exem}
For a fixed object $A$,  $\sigma^A: 1\to (\,\cdot\,)^A$ might not be a homotopy theory when $A\not\cong1$.  
\end{exem}
\begin{exem}
Let $\E$ be a topos and let $j:\subob\rightarrow\subob$ be a local operator. Then, by \ref{T:modalityimplieshomotopy}, $q:1_\E\Rightarrow Q_j$ is a homotopy theory in $\E$ if $\E$ satisfies NS.
\end{exem}
\section{Categorical properties of the homotopy category}
Now that it has been established that $\E_p$ is a category, the main purpose of this section is to finish the proof of \hyperlink{T:TeorA}{Theorem A}: That $\E_p$ inherits the properties of being cartesian closed, extensive and distributive. 
\begin{prop}\label{P:ECCCimpliesEphasprod}
If $\E$ is a cartesian closed category with a homotopy theory, then $\E_p$ has products.
\end{prop}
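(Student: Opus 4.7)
The plan is to show that the product of $X$ and $Y$ in $\E_p$ is carried by the same underlying object $X\times Y$, together with the equivalence classes $[\pi_X]$ and $[\pi_Y]$ of the projections from $\E$. To this end I need to produce a pairing operation on $\E_p$-morphisms and verify that the universal property of products transfers through the quotient.

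First I would define, for given $[f]:Z\to X$ and $[g]:Z\to Y$ in $\E_p$, the pairing as $[\langle f,g\rangle]$, where $\langle f,g\rangle$ is the pairing in $\E$. The crucial step is well-definedness. For this I would invoke the canonical isomorphism $\tau:(X\times Y)^Z\xrightarrow{\;\cong\;}X^Z\times Y^Z$ obtained by transposing $\langle\pi_X\circ\ev,\pi_Y\circ\ev\rangle$; under $\tau$ the name $\text{`}\langle f,g\rangle\text{'}$ is identified with $\langle\text{`$f$'},\text{`$g$'}\rangle$. Applying $p$, and combining naturality of $p$ along $\tau$ with \eqref{E:homprod} for the product $X^Z\times Y^Z$, one computes that $p_{(X\times Y)^Z}\circ\text{`}\langle f,g\rangle\text{'}$ is identified, via $\Pi_0(\tau)$ followed by the product-preservation iso, with $\langle p_{X^Z}\circ\text{`$f$'},\,p_{Y^Z}\circ\text{`$g$'}\rangle$. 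Hence $f\sim f'$ and $g\sim g'$ jointly imply $\langle f,g\rangle\sim\langle f',g'\rangle$.

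The equations $[\pi_X]\circ[\langle f,g\rangle]=[f]$ and $[\pi_Y]\circ[\langle f,g\rangle]=[g]$ are immediate because composition in $\E_p$ is computed on representatives and the analogous equations hold in $\E$. For uniqueness, I would suppose $[h_1],[h_2]:Z\to X\times Y$ both project to $[f]$ and $[g]$; then $\pi_X h_1\sim\pi_X h_2$ and $\pi_Y h_1\sim\pi_Y h_2$, and since in $\E$ each $h_i$ equals $\langle\pi_X h_i,\pi_Y h_i\rangle$, running the well-definedness calculation backwards yields $h_1\sim h_2$, that is $[h_1]=[h_2]$. Terminality of $1$ in $\E_p$ is immediate, since $\E(Z,1)$ is a singleton and so is any of its quotients.

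The only genuinely non-formal step is the translation of $\sim$ on $(X\times Y)^Z$ into the simultaneous relations $\sim$ on $X^Z$ and $Y^Z$; this is where product-preservation of $\Pi_0$ via \eqref{E:homprod} and naturality of $p$ along the exponential isomorphism $\tau$ are both used. Once that is in hand the rest reduces to transcribing the usual proof of the product universal property through the quotient functor $H_p$.
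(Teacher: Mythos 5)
Your argument is correct and follows essentially the same route as the paper's proof: the crux in both is the well-definedness of pairing, obtained by transporting $\text{`}\langle f,g\rangle\text{'}$ across the exponential isomorphism $(X\times Y)^Z\cong X^Z\times Y^Z$ (your $\tau$, the paper's $\zeta$) and then combining naturality of $p$ with the product-preservation isomorphism of \eqref{E:homprod}. You additionally spell out the projection equations, uniqueness, and the terminal object, which the paper leaves implicit; these are routine once well-definedness is in hand.
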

\begin{proof}
Let $f,f':Z\rightarrow X$ and $g,g':Z\rightarrow Y$ be such that $f\sim f'$ and $g\sim g'$. To verify that $\langle f,g\rangle\sim\langle f',g'\rangle$, let $\zeta$ be the isomorphism $(X\times Y)^Z\cong X^Z\times Y^Z$. So
\[
\langle\text{`$f$'},\text{`$g$'}\rangle=\zeta\circ\text{`$\langle f,g\rangle$'}
\]
since in general, for an arrow $\langle r_1,r_2\rangle:A\times Z\rightarrow X\times Y$,
\[
\langle\widehat{r_1},\widehat{r_2}\rangle=\zeta\circ\widehat{\langle r_1,r_2\rangle}.
\]
Now, the following diagram commutes:
\[
\xymatrix{
& 1\ar@/_.7pc/@<-.5ex>[dl]_{\text{`$\langle f,g\rangle$'}}\ar@/_.7pc/@<.5ex>[dl]^{\text{`$\langle f',g'\rangle$'}} \ar@/^.7pc/@<-.5ex>[dr]_{\langle \text{`$f$'},\text{`$g$'}\rangle}\ar@/^.7pc/@<.5ex>[dr]^{\langle \text{`$f'$'},\text{`$g'$'}\rangle} && \\
Z^{X\times Y}\ar[rr]_\zeta\ar[d]_{p_{Z^{X\times Y}}}& & Z^X\times Z^Y\ar[rd]^{p_{Z^X}\times p_{Z^Y}}\ar[d]^{p_{Z^X\times Z^Y}}& \\
\Pi_0(Z^{X\times Y})\ar[rr]_{\Pi_0(\zeta)} & & \Pi_0(Z^X\times Z^Y)\ar[r]_(.45)\cong & \Pi_0(Z^X)\times\Pi_0(Z^Y).
}
\]
Therefore $\langle\text{`$f$'},\text{`$g$'}\rangle\sim\langle\text{`$f'$'},\text{`$g'$'}\rangle$. Hence $\text{`$\langle f,g\rangle$'}\sim\text{`$\langle f',g'\rangle$'}$.
\end{proof}
\begin{prop}\label{P:ECCCimpliesEpexp}
If $\E$ is a cartesian closed category with a homotopy theory, then $\E_p$ has exponentials.
\end{prop}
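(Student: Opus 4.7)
The plan is to define the exponential $Y^X$ in $\E_p$ to be the very same object $Y^X$ from $\E$, and to take the $\sim$-class of the underlying evaluation $\ev_Y^X\colon Y^X\times X\to Y$ as the evaluation in $\E_p$. The central claim is that the CCC transposition bijection $\E(Z\times X,Y)\cong\E(Z,Y^X)$ already respects $\sim$, and therefore descends to a natural bijection $\E_p(Z\times X,Y)\cong\E_p(Z,Y^X)$; combined with the products from Proposition~\ref{P:ECCCimpliesEphasprod}, this yields the exponential adjunction in $\E_p$.

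First I would name the canonical isomorphism $\tau=\tau_{Z,X,Y}\colon Y^{Z\times X}\xrightarrow{\sim}(Y^X)^Z$ in $\E$ and record the identity
\[
\tau\circ\text{`$f$'}=\text{`$\hat f$'},
\]
valid for any $f\colon Z\times X\to Y$ with transpose $\hat f\colon Z\to Y^X$. This is just iterated transposition, and is the exponential counterpart of the identity $\zeta\circ\text{`$\langle f,g\rangle$'}=\langle\text{`$f$'},\text{`$g$'}\rangle$ exploited in Proposition~\ref{P:ECCCimpliesEphasprod}.

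The crucial step is then the equivalence $f\sim g\iff \hat f\sim \hat g$. By naturality of $p$ one has $p_{(Y^X)^Z}\circ\tau=\Pi_0(\tau)\circ p_{Y^{Z\times X}}$, and $\Pi_0(\tau)$ is an iso because $\Pi_0$ is a functor. Precomposing with `$f$' and `$g$' and using the display above immediately gives both implications. Equivalently, under the identifications $\E_p(A,B)\cong\E(1,\Pi_0(B^A))$, the required transposition bijection in $\E_p$ is nothing but postcomposition with $\Pi_0(\tau)$, which is manifestly a bijection.

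It remains to verify naturality in $Z$ (naturality in $Y$ is analogous and simpler). Given $\varphi\colon Z'\to Z$, the relevant square, once unpacked via Proposition~\ref{P:Epphinatural}, reduces to naturality of $\tau$ in $Z$ passed through the functor $\Pi_0$. The only real bookkeeping---and the sole mild obstacle---is tracking this single naturality square; once it is dispatched, nothing further is required, since both the candidate $Y^X$ and the counit $\ev_Y^X$ are inherited from $\E$ and $\Pi_0$ simply transports $\tau$ to an isomorphism of the classifying objects.
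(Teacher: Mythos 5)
Your proposal is correct and follows essentially the same route as the paper: the key identity $\tau\circ\text{`$f$'}=\text{`$\hat f$'}$ (the paper's $\xi$), combined with naturality of $p$ at this isomorphism and functoriality of $\Pi_0$, is exactly how the paper shows transposition descends to $\sim$-classes. The only difference is that you additionally spell out the converse implication and the naturality checks, which the paper leaves implicit.
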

\begin{proof}
Let $f,f':X\times Z\rightarrow Y$ be such that $f\sim f'$. To verify that $\hat{f}\sim\widehat{f'}$, let $\xi$ be the isomorphism $Y^{X\times Z}\cong (Y^Z)^X$. So
\[
\text{`$\hat{f}$'}=\xi\circ\text{`$f$'}
\]
since in general, for an arrow $r:A\times X\times Z\rightarrow Y$,
\[
\widehat{\hat{r}}=\xi\circ\widehat{r},
\]
where on the left-hand side of the equation the inner $\hat{(-)}$ is with respect to $Z$ and the exterior one with respect to $X$, and on the right-hand side of the equation the $\widehat{(-)}$ is with respect to $X\times Z$. Therefore if $f\sim f'$, then $\hat{f}\sim\widehat{f'}$ since the following diagram commutes:
\[
\begin{gathered}[b]
\xymatrix{
& 1\ar@/_.7pc/@<-.5ex>[dl]_{\text{`$f$'}}\ar@/_.7pc/@<.5ex>[dl]^{\text{`$f'$'}} \ar@/^.7pc/@<-.5ex>[dr]_{\text{`$\hat{f}$'}}\ar@/^.7pc/@<.5ex>[dr]^{\text{`$\widehat{f'}$'}} & \\
Y^{X\times Z}\ar[rr]_\xi\ar[d]_{p_{Y^{X\times Z}}} & & (Y^Z)^X\ar[d]^{p_{(Y^Z)^X}} \\
\Pi_0(Y^{X\times Z})\ar[rr]_{\Pi_0(\xi)} & & \Pi_0((Y^Z)^X).
}\\[-\dp\strutbox]
\end{gathered}
\qedhere\]
\end{proof}

\begin{prop}\label{P:EdistimpliesEhso}
Let $\E$ be a cartesian closed category with a homotopy theory. If $\E$ has finite sums then $\E_p$ has finite sums and is therefore distributive. Explicitly, if $f_1,g_1:X\rightarrow Z$ and $f_2,g_2:Y\rightarrow Z$ are such that $f_1\sim g_1$ and $f_2\sim g_2$, then $[f_1,f_2]\sim[g_1,g_2]$.
\end{prop}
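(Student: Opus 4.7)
The strategy is to mirror the proofs of Propositions \ref{P:ECCCimpliesEphasprod} and \ref{P:ECCCimpliesEpexp}: transport the problem through an appropriate natural isomorphism of exponentials, and then exploit naturality of $p$ together with the product-preservation of $\Pi_0$. The key observation is that the distributivity isomorphism $\alpha:Z^{X+Y}\to Z^X\times Z^Y$ from \eqref{E:distributiveimpliestothesum} intertwines the name of a copair with the pair of names; specifically, taking $A=1$ in \eqref{E:distributiveimpliestothesum},
\[\alpha\circ\text{`}[f_1,f_2]\text{'}=\langle\text{`}f_1\text{'},\text{`}f_2\text{'}\rangle,\]
and analogously for $[g_1,g_2]$.

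With this in hand, the desired equality $p_{Z^{X+Y}}\circ\text{`}[f_1,f_2]\text{'}=p_{Z^{X+Y}}\circ\text{`}[g_1,g_2]\text{'}$ is equivalent, by naturality of $p$ at the isomorphism $\alpha$ and the fact that $\Pi_0(\alpha)$ is invertible, to the equality $p_{Z^X\times Z^Y}\circ\langle\text{`}f_1\text{'},\text{`}f_2\text{'}\rangle=p_{Z^X\times Z^Y}\circ\langle\text{`}g_1\text{'},\text{`}g_2\text{'}\rangle$. Composing this further with the canonical isomorphism $\Pi_0(Z^X\times Z^Y)\cong\Pi_0(Z^X)\times\Pi_0(Z^Y)$ provided by \eqref{E:homprod}, and using that $p$ is natural with respect to the projections, the two sides become $\langle p_{Z^X}\circ\text{`}f_1\text{'},\,p_{Z^Y}\circ\text{`}f_2\text{'}\rangle$ and $\langle p_{Z^X}\circ\text{`}g_1\text{'},\,p_{Z^Y}\circ\text{`}g_2\text{'}\rangle$, which coincide by the hypotheses $f_1\sim g_1$ and $f_2\sim g_2$. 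I would package this as a single commutative diagram in the style of Proposition \ref{P:ECCCimpliesEphasprod}.

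Having checked well-definedness of copairing, the universal property of $X+Y$ in $\E_p$ is then straightforward: the injections $\iota_1,\iota_2$ are inherited from $\E$; given $[h],[k]$ in $\E_p$, the class $[[h,k]]$ provides a factorization, and uniqueness follows because any $\ell:X+Y\to W$ in $\E$ satisfies $\ell=[\ell\iota_1,\ell\iota_2]$, so by the result just proved, homotopy-factorizations of $[h]$ and $[k]$ through the injections force $[\ell]=[[h,k]]$. Thus $X+Y$ with $[\iota_1],[\iota_2]$ is a coproduct in $\E_p$. The initial object $0$ of $\E$ remains initial in $\E_p$ for the same reason (there is a unique arrow to every object, so its class is unique).

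For the "therefore distributive" clause, once $\E_p$ is shown to have finite products (Proposition \ref{P:ECCCimpliesEphasprod}), exponentials (Proposition \ref{P:ECCCimpliesEpexp}) and finite sums, distributivity is automatic: in any cartesian closed category, the functor $A\times(-)$ is a left adjoint and thus preserves coproducts. I expect the only delicate point to be carefully chaining the naturality square of $p$ at $\alpha$ with the product-preservation isomorphism \eqref{E:homprod}; no new idea beyond what is used for products and exponentials is required.
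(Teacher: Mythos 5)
Your proposal is correct and follows essentially the same route as the paper: the key identity $\alpha\circ\text{`$[f_1,f_2]$'}=\langle\text{`$f_1$'},\text{`$f_2$'}\rangle$ from \eqref{E:distributiveimpliestothesum}, combined in one commutative diagram with the naturality of $p$ and the product-preservation isomorphism of $\Pi_0$, is exactly the paper's argument. The extra details you supply on the universal property of the coproduct in $\E_p$ and on deducing distributivity from cartesian closedness are left implicit in the paper but are accurate.
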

\begin{proof}
By \eqref{E:distributiveimpliestothesum},
\begin{align*}
\langle\text{`$f_1$'},\text{`$f_2$'}\rangle &=\alpha\circ\text{`$[f_1,f_2]$'}\\
\langle\text{`$g_1$'},\text{`$g_2$'}\rangle &=\alpha\circ\text{`$[g_1,g_2]$'}.
\end{align*}
Hence the following diagram commutes:
\[\xymatrix{
1\ar@<-.5ex>[d]_{\langle\text{`$f_1$'},\text{`$f_2$'}\rangle}\ar@<.5ex>[d]^{\langle\text{`$g_1$'},\text{`$g_2$'}\rangle} \ar@/^2pc/@<-.5ex>[drrr]_{\text{`$[f_1,f_2]$'}}\ar@/^2pc/@<.5ex>[drrr]^{\text{`$[g_1,g_2]$'}} &&& \\
Z^X\times Z^Y\ar[rrr]_{\alpha^{-1}}\ar[d]_{p_{Z^X}\times p_{Z^Y}}\ar[dr]^{p_{Z^X\times Z^Y}} &&& Z^{X+Y}\ar[d]^{p_{Z^{X+Y}}} \\
\Pi_0(Z^X)\times\Pi_0(Z^Y)\ar[r]_(.55)\cong &\Pi_0(Z^X\times Z^Y)\ar[rr]_{\Pi_0(\alpha^{-1})} && \Pi_0(Z^{X+Y}),
}\]
wherefore the conclusion follows.
\end{proof}

\begin{prop}\label{P:EextimpliesEpext}
Let $\E$ be a cartesian closed category with a homotopy theory. If $\E$ is extensive and distributive then so is $\E_p$.
\end{prop}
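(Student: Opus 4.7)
The plan is to reduce extensivity of $\E_p$ to that of $\E$ via the canonical functor $H_p:\E\to\E_p$, which is identity on objects, surjective on hom-sets by definition of $\sim$, and---by Propositions~\ref{P:ECCCimpliesEphasprod} and~\ref{P:EdistimpliesEhso}---preserves finite products and finite coproducts. Consequently the initial object $0$, the coproducts $X+Y$, and the injections $\iota_X,\iota_Y$ of $\E_p$ all arise from the corresponding data in $\E$. I shall use the characterization that a category with finite coproducts is extensive iff (i) the initial object is strict, (ii) the coproduct injections are disjoint monics, and (iii) every arrow $f:C\to X+Y$ admits a pullback decomposition $C\cong(C\times_{X+Y}X)+(C\times_{X+Y}Y)$.

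First I would dispatch (i) and (ii). Strictness of $0$ in $\E_p$ follows by lifting any $\E_p$-arrow $A\to 0$ to $\E$ and invoking strictness there. For monicity of $\iota_X$ in $\E_p$: if $\iota_X\circ f\sim\iota_X\circ g$, then naturality of $p$ combined with the identity $\text{`$\iota_X\circ f$'}=(\iota_X)^Z\circ\text{`$f$'}$ reduces the assertion to a cancellation property of $\Pi_0((\iota_X)^Z)$ on global sections, which I would extract from the diagonal pullback square $X\cong X\times_{X+Y}X$ in $\E$ by transporting it along the finite-product-preserving $\Pi_0$. Disjointness of the injections in $\E_p$ is inherited from $\E$ by a similar lifting of the pullback cone.

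The crux, and the main obstacle, is condition (iii). Given $f:C\to X+Y$ in $\E_p$, choose a lift $\tilde f\in\E$ and apply extensivity of $\E$ to obtain $C\cong C_X+C_Y$ with components $\tilde f_X:C_X\to X$ and $\tilde f_Y:C_Y\to Y$; pushing through $H_p$ then yields a candidate decomposition in $\E_p$. Two nontrivial points must still be verified: independence of the chosen lift up to isomorphism in $\E_p$, and the pullback universal property inside $\E_p$. Both are handled uniformly by lifting $\E_p$-test cones to $\E$, invoking extensivity of $\E$ at that level, and descending along $H_p$, using its fullness together with the coproduct compatibility of $\sim$ from Proposition~\ref{P:EdistimpliesEhso} to ensure that distinct $\E$-level choices produce $\sim$-equivalent $\E_p$-outputs.
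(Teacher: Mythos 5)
Your reduction of extensivity to the three conditions (strict initial object, disjoint monic injections, pullback decomposition of every arrow into a coproduct) is a legitimate characterization, but two of your verifications do not go through as described. First, the monicity of the injections in $\E_p$: you propose to extract a cancellation property of $\Pi_0((\iota_X)^Z)$ from the pullback square $X\cong X\times_{X+Y}X$ by ``transporting it along the finite-product-preserving $\Pi_0$''. A homotopy theory only requires $\Pi_0$ to preserve finite \emph{products}; it need not preserve pullbacks, and that square is not a product, so there is nothing to transport. The correct and much shorter argument is the one the paper uses: $\E_p$ is distributive by Proposition~\ref{P:EdistimpliesEhso}, and in any distributive category coproduct injections are monic (\citet*[Proposition 3.3]{MR1201048}); no $\Pi_0$-argument is needed.

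Second, and more seriously, your condition (iii) and the disjointness half of (ii) require verifying universal properties of pullbacks \emph{inside} $\E_p$, and your plan of ``lifting $\E_p$-test cones to $\E$'' breaks at the first step: a commuting square in $\E_p$ lifts only to a square in $\E$ commuting up to $\sim$, not on the nose, so extensivity of $\E$ cannot be applied to the lifted cone. Concretely, if $[f]\circ[u]=[\iota_X]\circ[v]$ in $\E_p$, the chosen representatives satisfy only $f\circ u\sim\iota_X\circ v$, and there is no a priori reason that two homotopic arrows into $X+Y$ induce the same decomposition of their common domain; establishing that is essentially the whole content of the proposition, and ``the coproduct compatibility of $\sim$'' from Proposition~\ref{P:EdistimpliesEhso} (which concerns copairings $[f_1,f_2]$, not decompositions of domains) does not supply it. This is exactly the obstruction that makes homotopy categories fail to have limits in general. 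The paper sidesteps it by working with the slice-category formulation of extensivity: it shows the canonical functor $\E_p/X\times\E_p/Y\rightarrow\E_p/(X+Y)$ is an equivalence, inheriting essential surjectivity and fullness from $\E$ and settling faithfulness by the monicity of injections in the distributive category $\E_p$, so that no pullback cone in $\E_p$ ever needs to be lifted. To salvage your route you would first have to prove a lemma of the form ``$h\sim k:C\rightarrow X+Y$ induce $\E_p$-isomorphic decompositions of $C$,'' which is in effect the faithfulness step of the paper's argument.
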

\begin{proof}
Suppose the canonical functor
\[
+:\E/X\times\E/Y\rightarrow\E/(X+Y)
\]
\[\xymatrix{
A\ar[d]_f & B\ar[d]_g\ar@{}[dr]|{\mapsto} & A+B\ar[d]^{f+g} \\
X, & Y & X+Y
}\]
is an equivalence for every pair of objects $X,Y\in\E$. So consider the analogous functor for $\E_p$ and name it $H(+)$. Hence, as $+$ is dense, so is $H(+)$. The fullness of $H(+)$ follows from that of $+$. Now, let $h,r:A\rightarrow A'$ and $k,s:B\rightarrow B'$ be arrows such that the following diagrams commute:
\[\xymatrix{
A+B\ar[rr]^{[h+k]}\ar[dr]_{[f+g]} & & A'+B'\ar[dl]^{[f'+g']} &
A+B\ar[rr]^{[r+s]}\ar[dr]_{[f+g]} & & A'+B'\ar[dl]^{[f'+g']}\\
& X+Y & &
& X+Y &
}\]
Suppose $[h+k]=[r+s]$. By \ref{P:EdistimpliesEhso}, 
\[
[i_X]\circ [h]=[i_X]\circ [r]\qquad\text{and}\qquad [i_Y]\circ [k]=[i_Y]\circ [s].
\]
As $\E_p$ is distributive (again by \ref{P:EdistimpliesEhso}), injections are monic (see \citet*[Proposition 3.3]{MR1201048}). Therefore $[h]=[r]$ and $[k]=[s]$. Hence $H(+)$ is faithful. That is, $H(+)$ is an equivalence of categories.
\end{proof}
\begin{proof}[Proof of \hyperlink{T:TeorA}{Theorem A}] This is the content of \ref{P:ECCCimpliesEphasprod}, \ref{P:ECCCimpliesEpexp}, \ref{P:EdistimpliesEhso}, and \ref{P:EextimpliesEpext}.
\end{proof}

\section{Explicit homotopies and contractibility I}
Two notions are fundamentally associated with the concept of homotopy theory. One is the concept of homotopy between maps and the other one is that of a space being contractible. At this generality not much more than the definition can be said, yet as advertised by the first parts of \hyperlink{T:TeorC}{Theorem C} and \hyperlink{T:TeorD}{Theorem D}, they are consistent with one's intuition. 

An {\em explicit homotopy} between two functions $f$ and $g$ in a cartesian closed category $\E$ with a homotopy theory $p:1\Rightarrow\Pi_0$ is an arrow $h:A\times X\rightarrow Y$ where $A$ is connected,  $\Pi_0(A)=1$, for which there are two points $a,b:1\rightarrow A$ such that
\[\xymatrix{
X\ar[r]_(.4){\langle a!,1\rangle}\ar@/^1pc/[rr]^f & A\times X\ar[r]_(.6)h & Y \\
X\ar[r]^(.4){\langle b!,1\rangle}\ar@/_1pc/[rr]_g & A\times X\ar[r]^(.6)h & Y.
}\]
An object $A$ is said to be {\em $p$-contractible} if it has a point $a:1\rightarrow A$ such that $a!\sim 1_A$.
\begin{teorema}\label{T:homotopyimplieshomotopicfunctions}
Let $\E$ be a cartesian closed category with a homotopy theory $p:1\Rightarrow\Pi_0$. If there is an explicit homotopy between $f$ and $g$, then $f\sim g$.
\end{teorema}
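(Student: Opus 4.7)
The plan is to exploit the transpose of the explicit homotopy $h:A\times X\to Y$ together with the naturality of $p$ and the connectedness of $A$. Specifically, let $\hat{h}:A\to Y^X$ be the exponential transpose of $h$. The first goal is to verify that the two arrows $\hat{h}\circ a,\,\hat{h}\circ b:1\to Y^X$ are precisely the names `$f$' and `$g$'.

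This identification is a direct computation with transposes. Using the canonical isomorphism $1\times X\cong X$ given by $\pi_X$, one sees that $a\times 1_X:1\times X\to A\times X$ equals $\langle a\circ!,1\rangle\circ\pi_X$, and hence the transpose of $\hat{h}\circ a$, which by the universal property is $\ev^X_Y\circ(\hat{h}\circ a\times 1_X)=h\circ(a\times 1_X)$, simplifies to $h\circ\langle a\circ!,1\rangle\circ\pi_X=f\circ\pi_X$. By definition this transpose is the name `$f$', so $\hat{h}\circ a=\text{`$f$'}$; symmetrically $\hat{h}\circ b=\text{`$g$'}$.

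The final step is to apply the naturality of $p:1_\E\Rightarrow\Pi_0$ to $\hat{h}$, giving $p_{Y^X}\circ\hat{h}=\Pi_0(\hat{h})\circ p_A$. Composing on the right with $a$ and $b$ respectively yields
\[
p_{Y^X}\circ\text{`$f$'}=\Pi_0(\hat{h})\circ p_A\circ a\quad\text{and}\quad p_{Y^X}\circ\text{`$g$'}=\Pi_0(\hat{h})\circ p_A\circ b.
\]
Since $A$ is connected, $\Pi_0(A)=1$ is terminal, so both $p_A\circ a$ and $p_A\circ b$ are arrows $1\to 1$ and therefore coincide with $1_1$. Hence the two composites above are equal, which is exactly the condition $f\sim g$.

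There is no real obstacle; the only thing to be careful about is correctly unwinding the definition of the name `$f$' (transpose of $f\circ\pi_X$ rather than of $f$ itself) and the identification $a\times 1_X=\langle a\circ!,1\rangle\circ\pi_X$, both of which are routine in a cartesian closed category. The heart of the argument is the single use of naturality of $p$ together with the fact that $1$ is terminal in $\E$.
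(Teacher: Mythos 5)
Your proof is correct and follows essentially the same route as the paper's: identify $\hat h\circ a=\text{`$f$'}$ and $\hat h\circ b=\text{`$g$'}$ by a transpose computation, then conclude via the naturality square $p_{Y^X}\circ\hat h=\Pi_0(\hat h)\circ p_A$ and the fact that $\Pi_0(A)=1$ forces $p_A\circ a=p_A\circ b$. No gaps.
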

\begin{proof}
Let $h:A\times X\rightarrow Y$ be as required. The following diagram commutes:
\[\xymatrix{
1\times X\ar[d]_{a\times 1}\ar[r]^(.6){\pi_X} & X\ar[dl]^{\langle a!,1\rangle}\ar[dd]^f\\
A\times X\ar[d]_{\hat{h}\times 1}\ar[dr]^h & \\
Y^X\times X\ar[r]_(.6)\ev & Y.
}\]
Hence $\hat{h}a=\text{`$f$'}$. Similarly, $\hat{h}b=\text{`$g$'}$.

On the other hand, since $A$ is connected, $a\sim b$. Hence the following diagram commutes:
\[\begin{gathered}[b]
\xymatrix{
1\ar@<-.3ex>@/^2pc/[rr]_{\text{`$g$'}}\ar@<.7ex>@/^2pc/[rr]^{\text{`$f$'}}\ar@<.5ex>[r]^a\ar@<-.5ex>[r]_b & A\ar[r]^{\hat{h}}\ar[d]_{p_A} & Y^X\ar[d]^{p_{Y^X}} \\
& \Pi_0(A)\ar[r]^(.45){\Pi_0(\hat{h})} & \Pi_0(Y^X).
}\\[-\dp\strutbox]
\end{gathered}
\qedhere
\]
\end{proof}
\begin{teorema}\label{T:1TeoD}
Let $\E$ be a cartesian closed category with a homotopy theory $p:1\Rightarrow\Pi_0$, and let $A\in\E$. Then $A$ is $p$-contractible if and only if, for every object $X\in\E$, $\Pi_0(\sigma_X):\Pi_0(X)\rightarrow\Pi_0(X^A)$ is an isomorphism.
\end{teorema}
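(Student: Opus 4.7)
The plan is to leverage the retraction $\ev_X^a \circ \sigma_X^A = 1_X$ from Section~1 together with the identification $\sigma_X^A \circ \ev_X^a = X^{a \circ !_A}$, which translates the $p$-contractibility of $A$ (a statement about endomaps of $A$) into a statement about endomaps of $X^A$ visible to $\Pi_0$.

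For $(1) \Rightarrow (2)$, fix a point $a: 1 \to A$ witnessing $a \circ !_A \sim 1_A$. Applying $\Pi_0$ to $\ev_X^a \circ \sigma_X^A = 1_X$ exhibits $\Pi_0(\sigma_X^A)$ as split monic, so it remains to check $\Pi_0(\sigma_X^A) \circ \Pi_0(\ev_X^a) = 1_{\Pi_0(X^A)}$. By surjectivity of $\E(1, p_{X^A})$, every point of $\Pi_0(X^A)$ has the form $p_{X^A} \circ \text{`$g$'}$ for some $g: A \to X$, so after naturality of $p$ the desired identity reduces to $p_{X^A} \circ \sigma_X^A \circ \ev_X^a \circ \text{`$g$'} = p_{X^A} \circ \text{`$g$'}$. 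The Section~1 identities $X^a = \sigma_X^1 \circ \ev_X^a$ and $X^! \circ \sigma_X^1 = \sigma_X^A$ combine to give $\sigma_X^A \circ \ev_X^a = X^! \circ X^a = X^{a \circ !_A}$, and then $X^{a \circ !_A} \circ \text{`$g$'} = \text{`$g \circ a \circ !_A$'}$ by the formula $Z^f \circ \text{`$g$'} = \text{`$g \circ f$'}$. Since $a \circ !_A \sim 1_A$ and composition descends to $\E_p$ (as verified at the start of Section~3), one obtains $g \circ a \circ !_A \sim g \circ 1_A = g$, yielding the needed equality of points of $\Pi_0(X^A)$.

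For $(2) \Rightarrow (1)$, specialize the hypothesis to $X = A$, so $\Pi_0(\sigma_A^A)$ is an isomorphism. The point $p_{A^A} \circ \text{`$1_A$'}: 1 \to \Pi_0(A^A)$ is the image under $\Pi_0(\sigma_A^A)$ of a unique $\psi: 1 \to \Pi_0(A)$, and surjectivity of $\E(1, p_A)$ produces a point $a: 1 \to A$ with $p_A \circ a = \psi$. Naturality of $p$ together with equation \eqref{E:aadequalssigmaa} then give $\Pi_0(\sigma_A^A) \circ p_A \circ a = p_{A^A} \circ \sigma_A^A \circ a = p_{A^A} \circ \text{`$a \circ !_A$'}$, which by construction of $\psi$ equals $p_{A^A} \circ \text{`$1_A$'}$. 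Hence $a \circ !_A \sim 1_A$, witnessing that $A$ is $p$-contractible.

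The only step that is more than formal bookkeeping is the identification $\sigma_X^A \circ \ev_X^a = X^{a \circ !_A}$; once that Section~1 consequence is in hand, both implications reduce to routine applications of naturality of $p$, surjectivity of $\E(1, p_X)$, and the compatibility of composition with $\sim$.
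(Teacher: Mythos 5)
Your argument takes a genuinely different route from the paper's: the paper reduces to $\Pi_0(X^!)$ via \eqref{E:Xtotheuniquesigma}, reads the statement as ``$\E_p(!,X):\E_p(1,X)\to\E_p(A,X)$ is a natural isomorphism'' through Proposition \ref{P:Epphinatural}, and concludes by Yoneda in $\E_p$ that this is equivalent to $A\cong_{\E_p}1$. Your $(2)\Rightarrow(1)$ direction is correct (and makes the Yoneda step explicit), and your plan for $(1)\Rightarrow(2)$---exhibiting $\Pi_0(\ev^a_X)$ as a two-sided inverse of $\Pi_0(\sigma^A_X)$ using $\ev^a_X\circ\sigma^A_X=1_X$ and $\sigma^A_X\circ\ev^a_X=X^{a\circ !_A}$---is sound and attractive.

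However, the verification that $\Pi_0(\sigma^A_X)\circ\Pi_0(\ev^a_X)=1_{\Pi_0(X^A)}$ has a genuine gap: you only check that $\Pi_0(X^{a\circ!_A})$ fixes every global element of $\Pi_0(X^A)$. In a general cartesian closed category two parallel arrows that agree on all global elements need not be equal; nothing in the definition of a homotopy theory makes $\E$ well-pointed, and the surjectivity of $\E(1,p_{X^A})$ only controls points, not arrows out of $\Pi_0(X^A)$. The step can be repaired without well-pointedness: precomposition is internal composition with a name, i.e.\ $X^{h}=c\circ\langle 1_{X^A},\text{`$h$'}\circ !\rangle$ for $h:A\to A$, where $c:X^A\times A^A\to X^A$. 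Since $\Pi_0$ preserves finite products, $p_1:1\to\Pi_0(1)$ is invertible, so naturality of $p$ turns the hypothesis $p_{A^A}\circ\text{`$a\circ !_A$'}=p_{A^A}\circ\text{`$1_A$'}$ into the equality of arrows $\Pi_0(\text{`$a\circ !_A$'})=\Pi_0(\text{`$1_A$'})$; applying $\Pi_0$ to the formula for $X^h$ and using the product-preservation isomorphism then gives $\Pi_0(X^{a\circ !_A})=\Pi_0(X^{1_A})=1_{\Pi_0(X^A)}$ as an identity of morphisms, not merely of points. With that substitution your proof goes through.
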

\begin{proof} By \eqref{E:Xtotheuniquesigma}, it is enough to verify the claim for $\Pi_0(X^!)$.  By \ref{P:Epphinatural}, $\Pi_0(X^!)$ is an isomorphism if and only if
\[\xymatrix{
\E_p(1,X)\ar[r]^{\E_p(!,X)} & \E_p(A,X)
}\]
is a natural isomorphism.  By Yoneda, this is equivalent to $A\cong_{\E_p} 1$, i.e. that there is a point $a:1\rightarrow A$ of $A$ such that $a!\sim 1_A$.
\end{proof}

\section{Reflectivity of the subcategory of decidables under homotopy}

Given a topos $\E$ precohesive over a topos $\mathcal S$, \citet{MR2369017} claims that $\mathcal S$ is also an exponential ideal of an associated homotopy category. From \cite{RS2023} it is known that a topos $\E$  that satisfies the NS is precohesive over a boolean base if and only if it also satisfies the WDQO and DSO postulates. From this, one can assume that $\mathcal S=\dec(\E)$. The purpose of this section is to exhibit $\dec(\E)$ as an exponential ideal of $\E_p$ when $\E$ is just assumed to satisfy  NS and WDQO, as advertised by \hyperlink{T:TeorB}{Theorem B}.

He suggests elsewhere in \cite{MR2369017} that having an explicit homotopy between maps should induce the same map between the objects of pieces. This is true when $\E$ is a topos that satisfies NS and WDQO.

\begin{prop}\label{P:Homtopicimpliessamepieces} Let $\E$ be a topos that satisfies NS and WDQO. Then $\Pi r=\Pi s$ for any two $r,s:X\rightarrow Y$ homotopic arrows in $\E$. If, furthermore, $Y$ is decidable, then $r=s$.
\end{prop}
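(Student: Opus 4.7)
The plan is to translate the homotopy hypothesis into an equation in $\dec(\E)$ via the adjunction $\Pi\dashv\I$, and then recover $r$ and $s$ from their names by evaluation, applying $\Pi$ to that recovery formula. First, I would rewrite the hypothesis $r\sim s$, which reads $p_{Y^X}\circ\text{`$r$'}=p_{Y^X}\circ\text{`$s$'}$, in terms of $\Pi$ alone: by naturality of $p$ at $\text{`$r$'},\text{`$s$'}:1\to Y^X$, both sides become $\I\Pi(\text{`$r$'})\circ p_1$ and $\I\Pi(\text{`$s$'})\circ p_1$ respectively; since $\Pi$ preserves the terminal (so $p_1$ is an isomorphism) and $\I$ is fully faithful, this is equivalent to $\Pi(\text{`$r$'})=\Pi(\text{`$s$'})$ in $\dec(\E)$.

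Next, I would use the standard recovery $r=\ev_Y^X\circ\langle\text{`$r$'}\circ{!_X},1_X\rangle$ (and analogously for $s$) to express $\Pi r$ in terms of $\Pi(\text{`$r$'})$. Because $\Pi$ preserves finite products, it preserves pairings and the map $!$, so
\[\Pi(r)=\Pi(\ev_Y^X)\circ\langle\Pi(\text{`$r$'})\circ{!_{\Pi X}},1_{\Pi X}\rangle,\]
and the parallel formula holds for $s$. Substituting $\Pi(\text{`$r$'})=\Pi(\text{`$s$'})$ then yields $\Pi r=\Pi s$, which is the first assertion.

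For the second assertion, when $Y$ is decidable I would invoke the result cited just before \hyperlink{T:TeorB}{Theorem B} to the effect that $\dec(\E)$ is an exponential ideal of $\E$, whence $Y^X$ is again decidable. Then $p_{Y^X}$ is the unit of the reflection at an object already in the subcategory, and is therefore an isomorphism, in particular monic. The original homotopy equation $p_{Y^X}\circ\text{`$r$'}=p_{Y^X}\circ\text{`$s$'}$ then immediately forces $\text{`$r$'}=\text{`$s$'}$, and transposing back gives $r=s$.

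There is no serious obstacle; the only real piece of care is the bookkeeping of the canonical product-comparison isomorphisms when pushing $\Pi$ through the recovery formula, and this is routine given that $\Pi$ preserves finite products.
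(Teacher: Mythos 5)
Your proof is correct, and the first assertion rests on the same mechanism as the paper's: recover $r$ from its name via $r=\ev\circ\langle\text{`$r$'}\circ{!},1\rangle$ and use that $\Pi$ preserves finite products so that $\Pi r$ depends only on $\Pi(\text{`$r$'})$, equivalently on $p_{Y^X}\circ\text{`$r$'}$. The organization differs slightly: the paper runs the same computation as a diagram over $1\times X$ and finishes by cancelling the epimorphism $p_X\circ\pi_X$, whereas you apply $\Pi$ directly to the recovery formula and never need that epi-cancellation --- a marginally cleaner route that uses one fewer fact about the setting. For the second assertion you genuinely diverge: the paper observes that $p_Y$ is an isomorphism for decidable $Y$ and concludes $r=s$ from $\Pi r=\Pi s$ by naturality, while you invoke the exponential-ideal property of $\dec(\E)$ to make $Y^X$ decidable, hence $p_{Y^X}$ invertible, and cancel it in the homotopy equation itself. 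Both are valid; the paper's version is more economical (it reuses the first part and needs nothing about exponentials), while yours shows the stronger fact that homotopic maps into a decidable object already have equal names.
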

\begin{proof}
 Notice that the following diagram commutes:
\[\xymatrix{
1\times X\ar[r]^{\pi_X}\ar@<-.5ex>[d]_{\text{`$r$'}\times 1}\ar@<.5ex>[d]^{\text{`$s$'}\times 1} & X\ar@<-.5ex>[d]_r\ar@<.5ex>[d]^s\ar[dr]^{p_X} &  \\
Y^X\times X\ar[dr]^(.69){p_{Y^X\times X}}\ar[r]_\ev\ar[d]_{p_{Y^X}\times p_X} &  Y\ar[dr]_(.38){p_Y} & \Pi X\ar@<-.5ex>[d]_{\Pi r}\ar@<.5ex>[d]^{\Pi s} \\
\Pi(Y^X)\times\Pi X\ar[r]_\cong & \Pi(Y^X\times X)\ar[r]_(.6){\Pi\ev} & \Pi Y.
}\]
Therefore, as $p_X\circ\pi_X$ is an epimorphism, $\Pi r=\Pi s$.  

Lastly, $p_Y$ is an isomorphism when $Y$ is decidable, thus finishing the proof.
\end{proof}
\begin{proof}[Proof of \hyperlink{T:TeorB}{Theorem B}] By \cite[Theorem B]{RS2023}), there is already an adjunction:
\[\xymatrix{
\E\ar@/_1pc/[d]_{\Pi}\ar@<-1ex>@{}[d]^{\dashv} \\
\dec(\E).\ar@/_1pc/[u]_{\mathcal{I}}
}\]
Its counit $\epsilon:\Pi\mathcal{I}\Rightarrow 1$ of $\Pi\dashv\mathcal{I}$ is also the counit for an adjunction $q_{!}\dashv q^\ast:\dec(\E)\rightarrow\E_p$. Indeed, the only thing to check is that if two arrows $f,g:X\rightarrow\mathcal{I}A$ are homotopic, then their corresponding arrows $\epsilon_A\circ\Pi f,\epsilon_A\circ\Pi g:\Pi X\rightarrow A$ under the adjunction $\Pi\dashv\mathcal{I}$ are the same. This follows from \ref{P:Homtopicimpliessamepieces}.
\end{proof}
\begin{obse}\label{O:counitequaltopinv}
Notice that since $\mathcal{I}$ is full and faithful, the counit $\epsilon$ of the adjunction $\dec(\E)(\Pi E,A)\cong\E(E,\mathcal{I}A)$ is the isomorphism 
\[
\epsilon_A=p_A^{-1}
\]
for every $A\in\dec(\E)$.
\end{obse}

\section{Decidability and the impossibility of motion}
\citet{MR3288694} proposes \eqref{E:nonbecoming} as part of the requirement for an object $T$ to be an unparameterized unit of time.  If  $\E$ is a topos satisfying NS and WDQO, this turns out to be satisfied by any connected exponent and any decidable base.   
\begin{teorema}\label{T:Decnonbecoming}
Let $\E$ be a topos satisfying NS and WDQO. If $T$ is a connected object with a point $0:1\to T$ and $A$ is decidable, then
\[
\ev^0_A:A^T\rightarrow A
\]
is an isomorphism.
\end{teorema}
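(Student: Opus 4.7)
The plan is to verify that $\ev^0_A$ is an isomorphism with two-sided inverse $\sigma_A^T$. One direction, $\ev^0_A\circ\sigma_A^T=1_A$, is already recorded among the elementary observations on cartesian closed categories and follows at once from $\sigma_A^T$ being the transpose of the projection $\pi_A\colon A\times T\to A$. The task therefore reduces to establishing the opposite composition $\sigma_A^T\circ\ev^0_A=1_{A^T}$.

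To that end I would transpose both sides across the adjunction $(-)\times T\dashv(-)^T$. The transpose of $1_{A^T}$ is $\ev_A^T\colon A^T\times T\to A$, while using $\ev_A^T\circ(\sigma_A^T\times 1_T)=\pi_A$ shows that the transpose of $\sigma_A^T\circ\ev^0_A$ equals $\ev^0_A\circ\pi_{A^T}$. Thus the entire problem collapses to the equality of arrows
\[
\ev_A^T=\ev^0_A\circ\pi_{A^T}\colon A^T\times T\longrightarrow A.
\]

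This is where both hypotheses enter. Since $A$ is decidable, $p_A$ is an isomorphism, so the preceding equation can be tested after post-composition with $p_A$. Naturality of $p$ turns the two sides into $\Pi(\ev_A^T)\circ p_{A^T\times T}$ and $\Pi(\ev^0_A)\circ p_{A^T}\circ\pi_{A^T}$ respectively. The connectedness hypothesis $\Pi T=1$, together with finite-product-preservation of $\Pi$, forces $\Pi(\pi_{A^T})\colon\Pi(A^T\times T)\to\Pi(A^T)$ to be an isomorphism whose inverse is $\Pi(\langle 1_{A^T},0\circ!_{A^T}\rangle)$; indeed $\pi_{A^T}\circ\langle 1,0\circ!\rangle=1_{A^T}$ and any section of an iso is its inverse. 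One more use of naturality then yields
\[
p_{A^T\times T}=\Pi(\langle 1,0\circ!\rangle)\circ p_{A^T}\circ\pi_{A^T}.
\]
Substituting this and invoking the factorization $\ev^0_A=\ev_A^T\circ\langle 1,0\circ!\rangle$ from \eqref{E:evt} together with functoriality of $\Pi$ rewrites the left-hand side as $\Pi(\ev^0_A)\circ p_{A^T}\circ\pi_{A^T}$, matching the right-hand side.

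The main obstacle is keeping careful track of the identification $\Pi(A^T\times T)\cong\Pi(A^T)$ induced by $\Pi T=1$; once the inverse of $\Pi(\pi_{A^T})$ is pinned down as $\Pi(\langle 1,0\circ!\rangle)$, the rest of the calculation is mechanical. Notably, WDQO is not invoked directly: decidability of $A$ alone supplies the crucial fact that $p_A$ is an isomorphism, which is what lets the whole check be transported into the decidable reflection $\Pi$.
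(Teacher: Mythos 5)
Your proof is correct, and its computational core coincides with the paper's: both arguments ultimately reduce to the single equation $\ev_A^T=\ev^0_A\circ\pi_{A^T}$ (equation \eqref{E:Eval} in the text) and verify it by applying $\Pi_0$, using product-preservation together with $\Pi_0(T)=1$ to kill the $T$-factor, and then cancelling the isomorphism $p_A$. Where you differ is in the packaging. The paper first runs a chain of natural bijections $\E(X,A^T)\cong\cdots\cong\E(X,A)$ through the adjunction $\Pi\dashv\mathcal I$ and concludes $A^T\cong A$ by Yoneda, and only afterwards identifies the abstract isomorphism as $\ev^0_A$ via \eqref{E:Eval}; you skip the Yoneda step entirely and instead exhibit $\sigma_A^T$ as an explicit two-sided inverse, using the recorded identity $\ev^0_A\circ\sigma_A^T=1$ and transposing the other composite down to the same key equation. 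Your route is shorter and more elementary; the paper's buys the naturality of the isomorphism $\E(-,A^T)\cong\E(-,A)$ in the exponent as a by-product. One small caveat about your closing remark: it is not quite right to say WDQO plays no role. The entire apparatus you rely on---the existence of the natural transformation $p$, the fact that $\Pi_0$ preserves finite products, and the fact that $p_A$ is an isomorphism precisely when $A$ is decidable---is supplied by NS together with WDQO via Theorem B of \cite{RS2023}; decidability of $A$ only becomes useful because WDQO makes $p_A:A\to\Pi A$ the minimal decidable quotient.
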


\begin{proof}
It is clear that there are have natural isomorphisms
\[\xymatrix{
\E\rrtwocell^1_{-\times 1}{\varphi} & & \E
}\]
and
\[\xymatrix{
& \E\ar[dr]^{\Pi_0} & \\
\E\times\E\ar[ur]^\times\ar[dr]_{\Pi_0\times\Pi_0}\rrtwocell<\omit>{\psi} & & \E. \\
& \E\times\E\ar[ur]_\times &
}\]
Therefore the following horizontal composites are natural isomorphisms:
\begin{equation}\label{E:natiso1}
\begin{gathered}
\xymatrix{
\E^{op}\ar[r]^{\Pi_0} & \E^{op}\rrtwocell_1^{-\times 1}{\varphi} & & \E^{op}\ar[r]^{\E(-,A)} & \set
}
\end{gathered}
\end{equation}
and
\begin{equation}\label{E:natiso2}
\begin{gathered}
\xymatrix{
& \E^{op}\times\E^{op}\ar[dr]^\times & \\
\E^{op}\times\E^{op}\ar[ur]^{\Pi_0\times\Pi_0}\ar[dr]_\times\rrtwocell<\omit>{\psi} & & \E^{op}\ar[r]^{\E(-,A)} & \set. \\
& \E^{op}\ar[ur]_{\Pi_0} &
}\end{gathered}
\end{equation}

Now, suppose $A\in\dec(\E)$ and $X$ is any object of $\E$. The following are natural bijections:
\begin{align*}
\E(X,A^T) &\cong\E(X\times T,A) & \\
\E(X\times T,A) &\cong\dec(\E)(\Pi (X\times T),A). &\\
\dec(\E)(\Pi (X\times T),A)&\cong\E(\Pi_0(X\times T),A) & \\
\E(\Pi_0(X\times T),A) &\cong\E(\Pi_0 X\times\Pi_0 T,A) &\text{by }\eqref{E:natiso2}\\
\E(\Pi_0 X\times\Pi_0 T,A) &\cong\E(\Pi_0 X,A) &\text{by }\eqref{E:natiso1}\\
\E(\Pi_0 X,A) &\cong\dec(\E)(\Pi X,A) & \\
\dec(\E)(\Pi X,A) &\cong\E(X,A). &
\end{align*}
Therefore there is a natural isomorphism $\E(-,A^T)\cong\E(-,A)$. So, by Yoneda, $A^T\cong A$. 

More explicitly, consider arrows $f:X\rightarrow A^T$ and $g:X\rightarrow A$ in $\E$ related by this chain of isomorphisms. Successive corresponding arrows in the chain are represented by radial arrows proceeding counter-clockwise from $\hat f$ in the following diagram:
\begin{equation*}
\begin{xy}
(-14.7,20.225)*+{X\times T}="v3";
(-23.725,-7.725)*+{\Pi_0(X\times T)}="v1";%
(0,-25)*+{\Pi_0(X)\times\Pi_0(T)}="v0";
(23.725,-7.725)*+{\Pi_0(X)}="v2";%
(14.7,20.225)*+{X}="v4";
(0,0)*+{A}="v7";%
{\ar "v3"; "v1"}?*!/^3mm/{p}; 
{\ar "v1"; "v0"}?*!/^3mm/{\psi}; 
{\ar "v0"; "v2"}?*!/^3mm/{\pi_0}; 
{\ar "v4"; "v2"}?*!/_3mm/{p}; 
{\ar "v3"; "v4"}?*!/_3mm/{\pi_0}; 
{\ar "v0"; "v7"}?*!/_3mm/{}; 
{\ar "v1"; "v7"}?*!/_3mm/{}; 
{\ar "v2"; "v7"}?*!/_3mm/{}; 
{\ar "v3"; "v7"}?*!/_3mm/{\hat f}; 
{\ar "v4"; "v7"}?*!/^3mm/{g}; 
\end{xy}
\end{equation*}
Since the outer pentagon commutes, the whole diagram commutes with
\begin{equation}
\hat f=g\circ \pi_0,
\end{equation}
which by uniqueness must necessarily always hold.  Thus, to finish the proof, it remains to verify that 
\begin{equation}\label{E:Eval}
\ev_A^T=\widehat{1_{A^T}}=\ev_A^0\circ\pi_0.
\end{equation}
To this effect, recall that by \eqref{E:evt},  $\ev^0_A=\ev^T_A\circ\langle1,0!\rangle$, so that
\begin{equation}
\ev^0_A\circ\pi_0=\ev^T_A\circ\langle\pi_0,0!\rangle.
\end{equation}
Notice that 
\begin{align*}
\Pi_0(\langle\pi_0,0!\rangle)&=\psi^{-1}\circ\langle\Pi_0(\pi_0),\Pi_0(0!)\rangle\\
&=\psi^{-1}\circ\langle\Pi_0(\pi_0),!\rangle\\
&=\psi^{-1}\circ\langle\Pi_0(\pi_0),\Pi_0(\pi_1)\rangle\\
&=\psi^{-1}\circ\langle\pi_0\circ\psi,\pi_1\circ\psi\rangle\\
&=\psi^{-1}\circ\langle\pi_0,\pi_1\rangle\circ\psi\\
&=1.
\end{align*}
This implies that $\Pi_0(\ev_A^T)=\Pi_0(\ev_A^0\circ\pi_0)$ and---since $p_A$ is an isomorphism---\eqref{E:Eval} as well, which finishes the proof. 
\end{proof}

\section{Explicit homotopies and contractibility II}
This section continues the exploration on the concepts of explicit homotopy and of contractible space, first in the presence of NS+WDQO, and later under the full precohesion assumption NS+WDQP+DSO. In particular it completes the proofs of \hyperlink{T:TeorC}{Theorem C} and \hyperlink{T:TeorD}{Theorem D} through the following two theorems.

\begin{teorema}\label{T:homotopicimpliesahomotopy}
Let $\E$ be a topos satisfying NS and WDQO with its associated homotopy theory $p$. If $f\sim g:X\rightarrow Y$ then there is a bipointed connected object $A$ and an explicit homotopy $h:A\times X\rightarrow Y$ between them.
\end{teorema}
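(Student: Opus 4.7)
The plan is to take $A$ as the fiber of $p_{Y^X}:Y^X\to\Pi(Y^X)$ over the common value of the two names. Set
\[
c := p_{Y^X}\circ\text{`$f$'} = p_{Y^X}\circ\text{`$g$'}:1\longrightarrow\Pi(Y^X),
\]
and let $\iota_A:A\hookrightarrow Y^X$ be the pullback of $c$ along $p_{Y^X}$. The two names then factor uniquely as $\text{`$f$'}=\iota_A\circ a$ and $\text{`$g$'}=\iota_A\circ b$ for points $a,b:1\to A$, providing the bipointing. The candidate explicit homotopy is $h:=\ev_Y^X\circ(\iota_A\times 1_X):A\times X\to Y$; the identities $h\circ\langle a!,1\rangle=f$ and $h\circ\langle b!,1\rangle=g$ then reduce directly to the defining property of the name, together with $\iota_A\circ a=\text{`$f$'}$ and $\iota_A\circ b=\text{`$g$'}$.

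The nontrivial task is to verify $\Pi(A)\cong 1$. Since $\Pi(Y^X)\in\dec(\E)$ the point $c$ is complemented, so one has a splitting $\Pi(Y^X)\cong 1+D$ with $c$ the first injection. Pulling back along $p_{Y^X}$ and invoking extensivity of the topos, one obtains a coproduct decomposition $Y^X\cong A+B$. By \hyperlink{T:TeorB}{Theorem~B}, $\Pi$ is left adjoint to $\mathcal{I}:\dec(\E)\hookrightarrow\E$, hence preserves coproducts, giving $\Pi(Y^X)\cong\Pi(A)+\Pi(B)$ with $\Pi(\iota_A)$ identified to the first coproduct injection---in particular monic.

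On the other hand, naturality of $p$ together with the pullback identity $p_{Y^X}\circ\iota_A = c\circ !_A$ produces two factorizations of $c\circ !_A$ through the reflection unit $p_A:A\to\Pi(A)$, namely $\Pi(\iota_A)\circ p_A$ and $(c\circ\bar{!})\circ p_A$, where $\bar{!}:\Pi(A)\to 1$ is the unique factorization of $!_A$. By the universal property of $p_A$ these must agree, so $\Pi(\iota_A)=c\circ\bar{!}$. Since $c$ is monic (being a global element in a topos) and the composite is monic, $\bar{!}:\Pi(A)\to 1$ is itself monic, so $\Pi(A)$ is subterminal. As $A$ carries the point $a$, the composite $p_A\circ a$ is a global element of $\Pi(A)$, forcing $\Pi(A)\cong 1$. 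I expect this connectedness verification to be the main obstacle, as it is precisely where NS+WDQO (via the reflection $\Pi\dashv\mathcal{I}$) and extensivity of $\E$ interact essentially; the remaining diagrammatic checks for the homotopy diagrams are routine.
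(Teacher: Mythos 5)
Your construction coincides with the paper's own proof: there too $A$ (called $K$) is the pullback of $p_{Y^X}$ over the common image of the two names, the bipointing is given by corestricting `$f$' and `$g$', and the homotopy is $\ev\circ(j\times 1)$. The only divergence is that where you verify $\Pi(A)\cong 1$ by hand---correctly, via the complemented point of the decidable object $\Pi(Y^X)$, extensivity, preservation of coproducts by the left adjoint $\Pi$, and the uniqueness clause of the universal property of $p_A$---the paper simply cites Proposition 2.1 and Theorem 2.4 of \cite{RS2023} for the connectedness of the fibers of $p$.
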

\begin{teorema}\label{T:contractibleiffAtotheX}
Let $\E$ be a topos satisfying NS, WDQO and DSO. Let $A\in\E$. Then 
\[
\Pi(A^X)=1
\]
for every $X\in\E$ if and only if there is a point $a:1\rightarrow A$ such that $a!\sim 1_A$. That is, $\Pi(A^X)=1$ for every $X\in\E$ if and only if $A$ is contractible under the associated homotopy theory $p$ for $\E$. 
\end{teorema}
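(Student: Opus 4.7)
The plan is to reduce the problem to an isomorphism in the homotopy category $\E_p$ and then transport it across $q_!$; the two tools required are already in hand, namely Theorem A (the cartesian closed structure on $\E_p$) and Theorem B (the factorisation $\Pi = q_!\circ H_p$), so no new ad-hoc construction is needed.

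For the sufficiency direction, suppose $a : 1 \to A$ is a point with $a \circ !_A \sim 1_A$. This equation is precisely the assertion that $[a] : 1 \to A$ and $[!_A] : A \to 1$ are mutually inverse in $\E_p$, so that $A \cong 1$ in $\E_p$. Because $\E_p$ is cartesian closed, this yields $A^X \cong 1^X \cong 1$ in $\E_p$ for every $X$. Applying the functor $q_! : \E_p \to \dec(\E)$ then gives
\[
\Pi(A^X) \;=\; q_!(A^X) \;\cong\; q_!(1) \;=\; 1,
\]
where the outer equalities use that $H_p$ is the identity on objects together with $q_!\circ H_p = \Pi$, and that $\Pi$ preserves finite products.

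For the necessity direction, assume $\Pi(A^X) = 1$ for every $X$. Specialising to $X = 1$ gives $\Pi(A) = \Pi(A^1) = 1$; since $\Pi$ preserves the initial object, $A$ cannot be initial, and so NS supplies a point $a : 1 \to A$. Specialising instead to $X = A$, the hypothesis $\Pi(A^A) = 1$ forces $p_{A^A} : A^A \to \mathcal{I}\Pi(A^A) = 1$ to be the unique map to the terminal object. Consequently, for any two arrows $f, g : A \to A$, the composites $p_{A^A} \circ \text{`$f$'}$ and $p_{A^A} \circ \text{`$g$'}$ both coincide with $1_1$, whence $f \sim g$. Applied to $f = 1_A$ and $g = a \circ !_A$, this exhibits $A$ as $p$-contractible.

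The argument is entirely formal once Theorems A and B are invoked; the only step warranting explicit verification is the equality $q_!(1) = 1$, which is immediate from $q_!\circ H_p = \Pi$ and the preservation of finite products by $\Pi$. I do not expect to use DSO directly in the proof itself; it appears in the hypotheses because it is part of the precohesion context in which the associated homotopy theory $p$ of Theorem B naturally lives.
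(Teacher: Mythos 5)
Your proof is correct, and one half of it takes a genuinely different route from the paper's. For the direction ``$\Pi(A^X)=1$ for all $X$ implies contractible'' you argue essentially as the paper does (the paper packages the same computation as a natural isomorphism $\E_p(-,A)\cong\E_p(-,1)$ and invokes Yoneda, whereas you specialize to $X=A$ and read off $1_A\sim a!$ directly; you also need the small observation that $\Pi(A)\cong 1$ forces $A\neq 0$ so that NS yields the point, which is fine). For the converse, however, the paper's proof shows only that $\E_p(X,A)=1$, deduces that $\Pi(A^X)$ has exactly one (dense) point, and then appeals to Theorems C, D and 7.2 of \cite{RS2023} to conclude $\Pi(A^X)=1$ --- this is where DSO enters. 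You instead transport the isomorphism $A\cong_{\E_p}1$ through the $\E_p$-exponential functor $(-)^X$ (using that, by the proof of Proposition \ref{P:ECCCimpliesEpexp}, the exponential object of $\E_p$ is literally the object $A^X$ of $\E$) and then through the functor $q_!$ of Theorem~B, whose action on objects is $\Pi$. Since functors preserve isomorphisms, $\Pi(A^X)\cong\Pi(1)=1$ follows with no counting of points and no density argument. This is cleaner, and it substantiates your closing remark: your argument uses only NS and WDQO (via Theorems A and B and Proposition \ref{P:Homtopicimpliessamepieces}), so DSO is indeed not needed for this implication, whereas the paper's own route does lean on results requiring it.
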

\begin{proof}[Proof of \ref{T:homotopicimpliesahomotopy}]
Let $f,g:X\rightarrow Y$ be two homotopic arrows in $\E$. Let $K$ be the pullback
\[\xymatrix{
K\ar@{(>->}[rr]^j\ar[d] && Y^X\ar[d]^{p} \\
1\ar@{(>->}[rr]_{p(\text{`$f$'})} && \Pi_0(Y^X).
}\]
Thus $\Pi(K)=1$ by Proposition 2.1 and Theorem 2.4 in \cite{RS2023}. Now, let $a:1\rightarrow K$ and $b:1\rightarrow K$ be the corestrictions of $\text{`$f$'}:1\rightarrow Y^X$ and $\text{`$g$'}:1\rightarrow Y^X$ to $K$, resp. Let $h:K\times X\rightarrow Y$ be the following composite:
\[\xymatrix{
K\times X\ar@{(>->}[r]^{j\times 1} & Y^X\times X\ar[r]^(.63)\ev & Y.
}\]
The following diagram commutes:
\[\xymatrix{
X\ar[r]^{\langle a!,1\rangle}\ar[dr]_{\langle !,1\rangle}\ar@/_4.5pc/[drr]_1 & K\times X\ar@{(>->}[r]^{j\times 1}\ar@/^2pc/[rr]^h & Y^X\times X\ar[r]^(.63)\ev & Y \\
& 1\times X\ar[u]_{a\times 1}\ar[ur]_{\text{`$f$'}\times 1}\ar[r]_{\pi_X} & X\ar[ur]_f &
}\]
A similar diagram commutes for $g$.
\end{proof}
\begin{proof}[Proof of \ref{T:contractibleiffAtotheX}]
Suppose $\Pi(A^X)=1$ for every $X\in\E$. Hence, by definition,
\begin{align*}
\E_p(X,A) &\cong\E(1,\mathcal{I}\Pi(A^X))\\
&\cong\E(1,1)\\
&=\E_p(X,1).
\end{align*}
Since $\E(X,1)=1$, that isomorphism is natural. Therefore, by Yoneda, $A\cong_{\E_p} 1$. That is, there is a point $a:1\rightarrow A$ of $A$ such that $a!\sim 1_A$.

Conversely, suppose $A$ is contractible; that is, there is a point $a:1\rightarrow A$ in $A$ such that $a!\sim 1_A$. Hence
\[\xymatrix{
1\ar[r]^a & A\ar[r]^{\sigma_A} & A^X
}\]
is a point of $A^X$. Now, let $g:X\rightarrow A$ be an arbitrary arrow in $\E$. So, by hypothesis, $g\sim a!$. Hence $1=\E_p(X,A)=\E(1,\mathcal{I}\Pi(A^X))$. Therefore $\Pi(A^X)$ has just one necessarily dense point, so by Theorems C, D and 7.2 in \cite{RS2023}, $\Pi(A^X)=1$.
\end{proof}

\begin{cor}\label{C:contractibleiffasim1}
Let $\E$ be a nondegenerate topos satisfying NS, WDQO and DSO. Let $A$ be an object of $\E$ with a point. Then $\Pi(A^A)=1$ if and only if $A$ has a point $a:1\rightarrow A$ such that $a!\sim 1_A$.
\end{cor}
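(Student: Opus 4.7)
The corollary is a direct specialization of \ref{T:contractibleiffAtotheX} to the case $X=A$ under the standing hypothesis that $A$ has a point. For the $(\Leftarrow)$ direction, if there exists a point $a:1\rightarrow A$ with $a!\sim 1_A$, then by \ref{T:contractibleiffAtotheX} one has $\Pi(A^X)=1$ for every object $X\in\E$; specializing to $X=A$ yields $\Pi(A^A)=1$.

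For the nontrivial $(\Rightarrow)$ direction, suppose $\Pi(A^A)=1$ and let $b:1\rightarrow A$ be the assumed point of $A$. The strategy is to show that this very $b$ serves as a contracting point, i.e.\ $b!\sim 1_A$. Both $\text{`}1_A\text{'}$ and $\text{`}b!\text{'}$ (which by \eqref{E:aadequalssigmaa} equals $\sigma_A\circ b$) are global elements $1\rightarrow A^A$. Because the homotopy theory associated with a topos satisfying NS and WDQO is $p:1_\E\Rightarrow\mathcal{I}\Pi$, and $\Pi(A^A)=1$, one has $\Pi_0(A^A)=\mathcal{I}(1)\cong 1$. Hence the two composites $p_{A^A}\circ\text{`}1_A\text{'}$ and $p_{A^A}\circ\text{`}b!\text{'}$ are both the unique arrow $1\to 1$ and thus coincide; by definition of $\sim_p$ this is exactly $1_A\sim b!$.

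There is no genuine obstacle here: \ref{T:contractibleiffAtotheX} has already absorbed the DSO-dependent analysis needed to identify ``having a single point'' with ``being terminal'' for values of $\Pi$. The role of the nondegenerateness and of the hypothesized point is merely to supply a candidate $a$ for the statement of contractibility; the pieces functor automatically collapses all parallel global elements of $A^A$, in particular the identity and any constant map, once $\Pi(A^A)=1$.
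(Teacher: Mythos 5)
Your proof is correct and takes essentially the same route as the paper's: the forward direction specializes Theorem \ref{T:contractibleiffAtotheX} to $X=A$, and the converse observes that $\Pi(A^A)=1$ forces $p_{A^A}$ to identify all global elements of $A^A$ --- which the paper phrases as $\E_p(A,A)=1$ --- so that $b!\sim 1_A$ for the given point $b$.
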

\begin{proof}
If $A$ has a point $a:1\rightarrow A$ such that $a!\sim 1_A$, $\Pi(A^A)=1$. Conversely, if $\Pi(A^A)=1$ and $A$ has a point $b:1\rightarrow A$, then $\E_p(A,A)=1$ and thus $b!\sim 1_A$.
\end{proof}

\begin{obse}
For $\E$ non degenerate, $0^0=\Pi(0^0)=1$ yet $0$ has no points.
\end{obse}

\section{Quality types vs. sufficient cohesion}

Let $\E$ be a topos precohesive over a Boolean base and satisfying NS. By Theorems C and D in \cite{RS2023}, this can be explicitly realized by the following string of adjoints:
\[\xymatrix{
\ar@<-3.5ex>@{}[rr]|{\dashv}\ar@<-3.5ex>@{}[rr]|(.26){\dashv}\ar@<-3.5ex>@{}[rr]|(.73){\dashv} & \E\ar@/^.7pc/[d]^{\Gamma}\ar@/_2.5pc/[d]_{\Pi} & \\
& \dec(\E)\ar@/^.7pc/[u]^{\mathcal{I}}\ar@/_2.5pc/[u]_{\Lambda} &
}\]
The following results within the section all have this assumption. Herein, two postulates are compared: Being a ``quality type'' and having sufficient cohesion.  The former means $\theta=p\circ\gamma$ is a natural isomorphism (see for example \cite[Lemma 9.2]{RS2023}); the latter that every object is a subobject of a Lawvere contractible. 
\begin{prop}
If $\E$ is a quality type and $f\sim g$, then $f=g$. 
\end{prop}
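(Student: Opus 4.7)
The plan is to reduce the statement to the invertibility of $\theta_{Y^X}$. By definition, $f\sim g$ means $p_{Y^X}\circ\text{`$f$'}=p_{Y^X}\circ\text{`$g$'}$, and since the name correspondence $\E(X,Y)\to\E(1,Y^X)$ is a bijection, it will suffice to prove $\text{`$f$'}=\text{`$g$'}$.

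First, I would factor each name through the counit $\gamma_{Y^X}:\mathcal{I}\Gamma(Y^X)\to Y^X$ of the adjunction $\mathcal{I}\dashv\Gamma$. Because $\mathcal{I}$ is fully faithful and preserves the terminal object, assembling the natural isomorphisms $\E(1,\mathcal{I}\Gamma(Y^X))\cong\dec(\E)(1,\Gamma(Y^X))\cong\E(1,Y^X)$ yields a bijection $\E(1,\mathcal{I}\Gamma(Y^X))\to\E(1,Y^X)$ which, by the triangle identity for $\mathcal{I}\dashv\Gamma$, is precisely post-composition with $\gamma_{Y^X}$. This produces unique $\tilde f,\tilde g:1\to\mathcal{I}\Gamma(Y^X)$ satisfying $\text{`$f$'}=\gamma_{Y^X}\circ\tilde f$ and $\text{`$g$'}=\gamma_{Y^X}\circ\tilde g$.

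Next, I would substitute these factorizations into the homotopy condition and use $\theta=p\circ\gamma$ to rewrite it as $\theta_{Y^X}\circ\tilde f=\theta_{Y^X}\circ\tilde g$. The quality-type hypothesis makes $\theta_{Y^X}$ an isomorphism, so left-cancellation forces $\tilde f=\tilde g$; post-composing once more with $\gamma_{Y^X}$ then yields $\text{`$f$'}=\text{`$g$'}$, whence $f=g$.

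The only step requiring any care is the factorization of global elements through $\gamma_{Y^X}$; once that is in place the argument is purely formal manipulation with the triangle identities. I do not foresee a genuine obstacle, since this lifting is a standard consequence of the adjunction $\mathcal{I}\dashv\Gamma$ together with $\mathcal{I}$ preserving the terminal object.
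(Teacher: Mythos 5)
Your proposal is correct and follows essentially the same route as the paper: both factor the names `$f$', `$g$' through $\gamma_{Y^X}:\mathcal{I}\Gamma(Y^X)\to Y^X$ as $\bar f,\bar g:1\to\mathcal{I}\Gamma(Y^X)$, observe that the homotopy condition then reads $\theta_{Y^X}\circ\bar f=\theta_{Y^X}\circ\bar g$ with $\theta=p\circ\gamma$, and cancel the isomorphism $\theta_{Y^X}$ supplied by the quality-type hypothesis. Your write-up is in fact more explicit than the paper's about why global elements lift uniquely along $\gamma_{Y^X}$.
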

\begin{proof}
Let $f,g:X\rightarrow Y$ be two arrows in $\E$. The following diagram commutes:
\[\xymatrix{
1\ar@<.5ex>[r]^{\text{`$f$'}}\ar@<-.5ex>[r]_{\text{`$g$'}}\ar@/_1pc/@<.5ex>[dr]^{\bar{f}}\ar@/_1pc/@<-.5ex>[dr]_{\bar{g}} & Y^X\ar[r]^{p_{Y^X}} & \Pi(Y^X) \\
& \dso(Y^X)\ar[u]_{\gamma_{Y^X}} &
}\]
Since , $\bar{f}$ equals $\bar{g}$, and thus $f$ equals $g$. 
\end{proof}
\begin{teorema}\label{T:EnotQTimpliesSC}
If $\E$ is not a quality type, then it is sufficiently cohesive.
\end{teorema}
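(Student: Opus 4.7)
The plan is to reduce to \hyperlink{T:TeorE}{Theorem~E} via the implication (3)$\Rightarrow$(1): it suffices to exhibit a bipointed connected object in $\E$. Since $\E$ is precohesive over the Boolean base $\dec(\E)$ and satisfies NS, by Theorems C and D of \cite{RS2023} the hypotheses NS, WDQO and DSO are all in force, so \hyperlink{T:TeorE}{Theorem~E} is available.

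First I would observe that failing to be a quality type, together with NS, forces two distinct points of some object to lie in the same connected piece. Concretely, $\theta=p\circ\gamma$ failing to be a natural isomorphism gives some $X\in\E$ for which $\theta_X:\mathcal{I}\Gamma(X)\to\mathcal{I}\Pi(X)$ is not iso. Both source and target live in $\dec(\E)$, a Boolean topos, so iso-ness of $\theta_X$ reduces to bijectivity on global elements. Surjectivity on points is given by NS: any point $q:1\to\Pi(X)$ pulls back along $p_X$ to a subobject $C\rightarrowtail X$ with $\Pi(C)=1$ by Proposition~2.1 and Theorem~2.4 in \cite{RS2023}; hence $C$ is connected and, as $\Pi(0)=0$ in the precohesive setting, non-initial, so NS yields a point of $C$ whose composite with the inclusion realizes $q$ as $p_X\circ c$. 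Therefore injectivity must fail: there exist $a\neq b:1\to X$ with $p_X\circ a=p_X\circ b$.

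Next I would form the pullback
\[\xymatrix{
K\ar@{(>->}[r]^j\ar[d] & X\ar[d]^{p_X} \\
1\ar@{(>->}[r]_{p_X\circ a} & \mathcal{I}\Pi(X),
}\]
exactly as in the proof of \ref{T:homotopicimpliesahomotopy}. The same results from \cite{RS2023} yield $\Pi(K)=1$, so $K$ is connected; and both $a,b$ factor uniquely as $j\circ a'$ and $j\circ b'$, with $a'\neq b'$ because $j$ is monic and $a\neq b$. Thus $K$ is a bipointed connected object, and \hyperlink{T:TeorE}{Theorem~E} concludes that $\E$ is sufficiently cohesive.

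The main obstacle is the surjectivity step for $\theta_X$: it depends on knowing that no summand of $\mathcal{I}\Pi(X)$ pulls back to $0$ in $X$, which is precisely the content of $\Pi$ preserving the initial object in the precohesive context. Everything else is a pullback-and-lift argument plus the cited characterization of connectedness of pieces from \cite{RS2023}.
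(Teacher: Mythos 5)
Your proposal is correct and its skeleton matches the paper's: both arguments produce two distinct points of some $X$ identified by $p_X$, take $K$ to be the fiber of $p_X$ over their common image (connected by Proposition~2.1 and Theorem~2.4 of \cite{RS2023}, and bipointed since the two points lift along the monic $j$), and then invoke the bipointed-connected criterion (Lemma~\ref{L:equivalencesforSC}, i.e.\ (iii)$\Rightarrow$(i), which is the same content as Theorem~E (3)$\Rightarrow$(1)). Where you genuinely diverge is in how the two points are extracted from the failure of $\theta$ to be an isomorphism. The paper works internally: it passes to $\dec(\E)$, uses that $\teo(\dec(\E))$ is strongly witnessed and hence complete, uses Booleanness to convert $\neg\forall$ into $\exists$, and then uses witnessing to obtain closed terms $a\neq b$ with $\theta_X(a)=\theta_X(b)$. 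You instead argue externally on global elements: $\theta_X$ is surjective on points (via NS and connectedness of fibers), so non-isomorphy forces a failure of injectivity on points. This is more elementary and shorter, but the step you state as automatic --- ``iso-ness of $\theta_X$ reduces to bijectivity on global elements because $\dec(\E)$ is Boolean'' --- is false for Boolean toposes in general (e.g.\ $G$-sets for a nontrivial group $G$) and needs the Nullstellensatz for $\dec(\E)$ as well: Boolean $+$ NS makes $1$ a generator, so that ``injective and surjective on points'' implies mono and epi, hence iso. That justification is precisely what the paper's internal-logic detour supplies, so you should make it explicit; with it added, your route is a valid and arguably cleaner alternative, at the cost of hiding the constructive/metalogical content that the paper's witnessing argument makes visible.
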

\begin{lema}\label{L:equivalencesforSC}
The following statements are equivalent:
\begin{enumerate}[(i)]
 \item $\E$ is sufficiently cohesive.
\item The subobject classifier is connected.
\item There is a bipointed connected object.
\end{enumerate}
\end{lema}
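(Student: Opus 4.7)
The plan is to establish the cycle (i) $\Rightarrow$ (iii) $\Rightarrow$ (ii) $\Rightarrow$ (iii), together with (iii) $\Rightarrow$ (i), so that all three conditions are equivalent. At the heart of the argument sits a single Lawvere-style construction that, starting from any bipointed connected object, produces an explicit contracting homotopy of $\Omega^X$ for every $X$, and thereby delivers both (i) and (ii) at once. Throughout I assume $\E$ is nondegenerate, so that $\top\neq\bot$ and $1+1$ has two distinct points.

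The two short arrows into (iii) are routine. For (i) $\Rightarrow$ (iii), apply sufficient cohesion to $1+1$ to obtain a monomorphism $1+1\hookrightarrow C$ into a Lawvere contractible $C$; composition with the two coproduct injections gives two distinct global elements of $C$, while Theorem \ref{T:contractibleiffAtotheX} with $X=1$ forces $\Pi(C)=1$. For (ii) $\Rightarrow$ (iii), the equality $\Pi(\Omega)=1$ implies $p_\Omega\circ\top=p_\Omega\circ\bot$, hence $\top\sim\bot$; Theorem \ref{T:homotopicimpliesahomotopy} then produces a bipointed connected object $K$ with points $k_0,k_1$ and a map $K\to\Omega$ sending them to $\top$ and $\bot$, forcing $k_0\neq k_1$.

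For the main construction, let $B$ be bipointed connected with distinct $a,b:1\to B$, and let $\chi_b:B\to\Omega$ classify the subobject $b:1\hookrightarrow B$. Then $\chi_b\circ b=\top$, and NS implies $\chi_b\circ a=\bot$: the pullback of $b$ along $a$ is a subterminal object with no points, hence initial. Set
\[
H:=\vee\circ(\chi_b\times 1_\Omega):B\times\Omega\to\Omega.
\]
Using that $\bot$ is neutral and $\top$ is absorbing for $\vee$, one verifies $H\circ\langle a!,1_\Omega\rangle=1_\Omega$ and $H\circ\langle b!,1_\Omega\rangle=\top!$, so $H$ is an explicit homotopy from $1_\Omega$ to $\top!$ indexed by the connected $B$. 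By Theorem \ref{T:homotopyimplieshomotopicfunctions}, $\Omega$ is contractible, whence $\Pi(\Omega)=1$, proving (ii). For (iii) $\Rightarrow$ (i), transpose the composite
\[
B\times\Omega^X\times X\xrightarrow{1\times\ev}B\times\Omega\xrightarrow{H}\Omega
\]
to obtain an explicit contracting homotopy of $\Omega^X$ onto the constant $\top!$ map; since the singleton $\{\cdot\}_X:X\to\Omega^X$ is monic in any topos, every object $X$ embeds into the contractible $\Omega^X$.

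The only real creative step is the formula $H=\vee\circ(\chi_b\times 1_\Omega)$, which encodes the classical intuition of a star-shaped neighbourhood of $\{b\}$; once this is in hand, the remainder is formal bookkeeping with pullbacks, the universal property of $\Omega$, and the exponential adjunction.
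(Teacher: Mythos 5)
Your argument is correct, and the core construction coincides with the paper's up to duality: the paper takes a bipointed connected $K$ with points $a\neq b$, uses NS to see that $\chi_a$ sends $a$ to $\top$ and $b$ to $\bot$, and forms the explicit homotopy $\wedge\circ(\chi_a\times 1):K\times\Omega\to\Omega$ contracting $1_\Omega$ onto $\bot!$; your $H=\vee\circ(\chi_b\times 1)$ contracting onto $\top!$ is the mirror image, justified by the same strictly-bipointed argument. Where you genuinely diverge is in the treatment of (i): the paper disposes of (i)$\Leftrightarrow$(ii) in one line by citing Proposition~4 of \citet{MR2369017} and only proves (ii)$\Leftrightarrow$(iii) explicitly, whereas you re-derive both directions of that citation inside the present framework --- (i)$\Rightarrow$(iii) by embedding $1+1$ into a Lawvere contractible and reading off connectedness from \ref{T:contractibleiffAtotheX} at $X=1$, and (iii)$\Rightarrow$(i) by exponentiating $H$ to a contracting homotopy of $\Omega^X$ and embedding $X$ via the singleton mono. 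This buys a self-contained proof at the cost of some redundancy: your detour through \ref{T:homotopicimpliesahomotopy} for (ii)$\Rightarrow$(iii) can be replaced by the paper's observation that $\Omega$ itself, with $\top\neq\bot$, is already the required bipointed connected object. Your explicit flagging of nondegeneracy (needed for $\top\neq\bot$ and for the two injections of $1+1$ to stay distinct) is a point the paper leaves implicit in its standing hypotheses.
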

\begin{proof}
Proposition 4 in \citet{MR2369017} yields (i)$\Leftrightarrow$(ii). Now clearly, (ii)$\Rightarrow$(iii) since $\subob$ is a bipointed connected object.

(iii)$\Rightarrow$(ii). Let $K$ be a bipointed connected object. Let $a,b$ be those two different points of $K$. So the following diagram commutes:
\[\xymatrix{
1\ar[r]\ar[d]_a & 1\ar[d]^\top \\
K\ar[r]_{\chi_a} & \subob.
}\]
Now, since $b\neq a$ and $\subob$ is strictly bipointed because $\E$ satisfies NS, the following diagram commutes:
\[\xymatrix{
1\ar[r]\ar[d]_b & 1\ar[d]^\bot \\
K\ar[r]_{\chi_a} & \subob.
}\]
This yields a homotopy between $1_\subob$ and $\bot !$:
\[\xymatrix{
\subob\ar[r]_(.4){\langle a!,1\rangle}\ar@/^1.5pc/[rrr]^1 & K\times\subob\ar[r]_{\chi_a\times 1} & \subob\times\subob\ar[r]_(.6)\wedge & \subob \\
\subob\ar[r]^(.4){\langle b!,1\rangle}\ar@/_1.5pc/[rrr]_{\bot !} & K\times\subob\ar[r]^{\chi_a\times 1} & \subob\times\subob\ar[r]^(.6)\wedge & \subob.
}\]
Therefore, by \ref{T:contractibleiffAtotheX}, $\subob$ is contractible and, accordingly, connected.
\end{proof}
\begin{proof}[Proof of \ref{T:EnotQTimpliesSC}]
The ``points-to-pieces'' morphism $\theta_Y:\Gamma Y\rightarrow\Pi Y$ is monic in $\E$ if and only if it is monic in $\dec(\E)$ since $\mathcal{I}$ is right adjoint to $\Pi$. Therefore $\theta_Y$ is not monic in $\E$ if and only if it is not monic in $\dec(\E)$.

Suppose $\E$ is not a quality type; i.e., $\theta$ is not a natural isomorphism. Hence $\theta_X$ is not monic in $\E$ for some $X\in\E$ and, accordingly, not monic in $\dec(\E)$. 
So, as $\dec(\E)$ also satisfies NS, the local set theory $\teo(\dec(\E))$ is strongly witnessed and, therefore, complete (see Theorems 4.31 and 4.32 in \citet{MR972257}). Hence
\[
\vdash_{\teo(\dec(\E))}\theta_X\textit{ is injective}\quad\text{or}\quad\vdash_{\teo(\dec(\E))}\neg(\theta_X\textit{ is injective}).
\]
Therefore, by Proposition 3.33 in \citet{MR972257}, since $\theta_X$ is not monic in $\dec(\E)$,
\[
\nvdash_{\teo(\dec(\E))}\theta_X\textit{ is injective}.
\]
Therefore $\vdash_{\teo(\dec(\E))}\neg(\theta_X\textit{ is injective})$; equivalently,
\[
\vdash_{\teo(\dec(\E))}\neg\forall x\forall x'(x\in\Gamma X\wedge x'\in\Gamma X\wedge\theta_X(x)=\theta_X(x')\Rightarrow x=x').
\]
But $\dec(\E)$ is boolean since it is equivalent to $\E_{\neg\neg}$ (see \citet{MR0877866}). Hence
\[
\vdash_{\teo(\dec(\E))}\exists x\exists x'(x\in\Gamma X\wedge x'\in\Gamma X\wedge\theta_X(x)=\theta_X(x')\wedge\neg(x=x')).
\]
Now, by Theorem 4.31 in \citet{MR972257}, $\teo(\dec(\E))$ is witnessed because it is strongly witnessed; therefore, using this fact twice, there are closed terms $a,b$ of the appropriate type such that
\[
\vdash_{\teo(\dec(\E))}a\in\Gamma X\wedge b\in\Gamma X\wedge\theta_X(a)=\theta_X(b)\wedge\neg(a=b).
\]
So there are two different points $a,b:1\rightarrow\Gamma X$ of $\Gamma X$ such that $\theta_X\circ a=\theta_X\circ b$. Whence, as $\theta_X=p_X\circ\gamma_X$ by Lemma 9.2 in \cite{RS2023}, the corresponding points $\bar{a},\bar{b}:1\rightarrow X$ in $X$ satisfy $p_X\circ\bar{a}=p_X\circ\bar{b}$, and $p_X(\bar{a})=\theta_X(a)$. Let
\[
K:=p_X^{-1}(p_X(\bar{a})).
\]
Therefore, by Proposition 2.1 and Theorem 2.4 in \cite{RS2023}, $K$ is connected. Furthermore, $K$ is bipointed. So, by Lemma~\ref{L:equivalencesforSC}, $\E$ is sufficiently cohesive.
\end{proof}
\section{Connectedness and contractibility of double negation sheaves}
Taking advantage of the characterization given by \hyperlink{T:TeorC}{Theorem C} in the context of precohesion over a boolean base in the presence of the Nullstellensatz, it is possible to analyze the connectedness of $\neg\neg$-sheaves constructively in that context. Furthermore, it is also posible to produce explicit homotopies to prove that the sheafification of a contractible space is contractible too.  

\begin{teorema}
Let $\E$ be a topos that satisfies NS+WDQO+DSO. Every nonempty $\neg\neg$-sheaf is connected if and only if there is sufficient cohesion.
\end{teorema}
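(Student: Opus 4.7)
The plan is to split the statement into two implications. The $(\Leftarrow)$ direction follows almost immediately from Lemma~\ref{L:equivalencesforSC}, while $(\Rightarrow)$ is more delicate and leverages the interplay between contractibility in $\E$ and $\neg\neg$-sheafification. Throughout, $\E$ is assumed non-degenerate (otherwise the statement is vacuous).

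For $(\Leftarrow)$, I would test the hypothesis on $\subob_{\neg\neg}$, the equalizer of $1_\subob$ and $\neg\neg:\subob\to\subob$, which is a $\neg\neg$-sheaf in $\E$. Under NS with $\E$ non-degenerate, $\top\neq\bot$ provide two distinct global elements, so $\subob_{\neg\neg}$ is nonempty and bipointed; by hypothesis it is also connected. Then Lemma~\ref{L:equivalencesforSC} (specifically (iii)$\Rightarrow$(i)) delivers sufficient cohesion.

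For $(\Rightarrow)$, I would use Lawvere's definition: every object of $\E$ embeds in a $p$-contractible one. Given a nonempty $\neg\neg$-sheaf $F$, pick a mono $\iota:F\hookrightarrow C$ with $C$ contractible. By \hyperlink{T:TeorD}{Theorem D}(3) applied with $X=1$, one has $\Pi(C)\cong\Pi(C^1)=1$. Next, apply the $\neg\neg$-sheafification $a:\E\to\sh_{\neg\neg}(\E)$; under the equivalence $\sh_{\neg\neg}(\E)\simeq\dec(\E)$ from \cite{MR0877866} (already invoked in the proof of \ref{T:EnotQTimpliesSC}), the functor $a$ is identified with $\Pi$, so $a(C)\cong 1$, while $a(F)\cong F$ because $F$ is already a sheaf. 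Since $a$ is left exact, $\iota$ becomes a mono $F\hookrightarrow 1$ in the boolean topos $\sh_{\neg\neg}(\E)$, whose only subobjects of $1$ are $0$ and $1$; nonemptiness of $F$ then forces $F\cong 1$, and hence $\Pi(F)\cong 1$, i.e. $F$ is connected.

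The main obstacle will be establishing crisply the identification $\Pi\simeq a$ and the left-exactness of $\Pi$ it delivers, since only product-preservation of $\Pi$ is part of the precohesive data explicitly stated in the paper. As a self-contained backup, I would first extend the construction of Lemma~\ref{L:equivalencesforSC} to show $\subob_{\neg\neg}$ is contractible by post-composing the characteristic map $\chi_a:K\to\subob$ of a bipointed connected $K$ with $\neg\neg$ and then conjugating with $\wedge$ restricted to $\subob_{\neg\neg}\times\subob_{\neg\neg}\to\subob_{\neg\neg}$; one then attempts to descend contractibility to an arbitrary nonempty $\neg\neg$-sheaf $F$ via the singleton embedding $F\hookrightarrow\subob_{\neg\neg}^F$ together with the sheaf condition on $F$. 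I expect this second route to be more laborious but to use only tools already present in the paper.
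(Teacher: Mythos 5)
Your first direction (every nonempty $\neg\neg$-sheaf connected $\Rightarrow$ sufficient cohesion) is fine and essentially the paper's argument: the paper tests the hypothesis on $\Lambda 2$ rather than $\subob_{\neg\neg}$, but both are nonempty bipointed $\neg\neg$-sheaves, and Lemma~\ref{L:equivalencesforSC}(iii)$\Rightarrow$(i) finishes it either way.

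The other direction, however, rests on a false identification. The equivalence $\dec(\E)\simeq\sh_{\neg\neg}(\E)$ is an equivalence of abstract categories, but it does \emph{not} identify the sheafification $L$ with $\Pi$: the inclusion $J:\sh_{\neg\neg}(\E)\to\E$ corresponds to the \emph{codiscrete} embedding $\Lambda$, not to the discrete embedding $\mathcal I$, so its left adjoint $L$ corresponds to $\Gamma$ (which is indeed left exact, being also a right adjoint of $\mathcal I$), not to $\Pi$. The functor $\Pi$ preserves finite products but is emphatically not left exact in a sufficiently cohesive topos --- e.g.\ it sends the mono $1+1\rightarrowtail\subob$ to the non-mono $1+1\to 1$. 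A sanity check exposes the problem independently of how you justify left exactness: your argument concludes $F\cong 1$ for \emph{every} nonempty $\neg\neg$-sheaf $F$, which is false ($\Lambda 2$ is a nonempty $\neg\neg$-sheaf with two distinct points). With the correct identification $L\simeq\Gamma$ the chain $F\cong LF\hookrightarrow LC\cong\Gamma C$ yields nothing, since $\Gamma C$ need not be $1$ for contractible $C$. Your backup route also stalls at the last step: from the singleton embedding $F\hookrightarrow\subob_{\neg\neg}^F$ and contractibility of $\subob_{\neg\neg}$ you only get that $\subob_{\neg\neg}^F$ is connected, and subobjects of connected objects need not be connected. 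The paper instead argues by contradiction: if $\Pi G$ had two distinct points, one lifts them to points $g_1',g_2'$ of $G$ by NS, defines a two-valued map on the decidable object $\Gamma K$ of a bipointed connected $K$, extends it along the $\neg\neg$-dense mono $\gamma_K:\Gamma K\to K$ using the sheaf property of $G$, and thereby splits $K$ as a nontrivial coproduct, contradicting $\Pi K=1$. Some such use of decidability of $\Gamma K$ together with the sheaf condition on $F$ is what your sketch is missing.
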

\begin{proof}
In light of \ref{L:equivalencesforSC}, suppose that $\E$ is sufficiently cohesive and let $K\in\E$ be a connected object with at least two different points $k_1,k_2$. Let $k'_1,k'_2$ be the corresponding points of $k_1,k_2$, resp. in $\Gamma K$. Let $G$ be a $\neg\neg$-sheaf such that there is an arrow $[g_1,g_2]:2\rightarrowtail\Pi G$. Since $\E$ satisfies NS and $p_G$ is surjective, let $g'_1,g'_2$ be two points in $G$ such that $p_G(g'_1)=g_1$ and $p_G(g'_2)=g_2$. Now, as $\Gamma K$ is decidable, define the following arrow $f:\Gamma K\rightarrow G$:
\[f:=
\begin{cases}
g'_1 &\text{if $x=k'_1$}\\
g'_2 &\text{if $x\neq k'_1$}.
\end{cases}
\]
Therefore, since $G$ is a $\neg\neg$-sheaf and $\gamma_K:\Gamma K\rightarrow K$ is monic $\neg\neg$-dense, there is a unique arrow $f':K\rightarrow G$ making the following diagram commute:
\[\xymatrix{
\Gamma K\ar[r]^(.55){\gamma_K}\ar[dr]_f & K\ar[d]^{f'} \\
& G.
}\]
Hence
\[
(f')^{-1}(g'_1)+(f')^{-1}(g'_2)=K,
\]
which, by Proposition 2.1 in \cite{RS2023}, is a contradiction since $\Pi K=1$. So $\Pi G$ has at most one point. If it has exactly one point, then $\Pi G=1$ since that point is $\neg\neg$-dense in $\Pi G$ (see Theorems C, D and 7.2 in \cite{RS2023}), and $G=0$ if and only if $\Pi G=0$ (see Lemma 2.2 in \cite{RS2023}).

Conversely, suppose every $\neg\neg$-sheaf is connected or 0. Consider $l_2:2\rightarrow\Lambda 2$. As 2 is decidable, 2 is $\neg\neg$-separated. Hence $l_2$ is monic $\neg\neg$-dense. That is, $\Lambda 2$ has exactly two points and is connected.
\end{proof}
Let $J:\E_{\neg\neg}\rightarrow\E$ be the inclusion functor, $L$ the $\neg\neg$-sheafification functor, and $l:1\Rightarrow JL$ the unit of the adjunction $L\dashv J$. 
\begin{teorema}\label{T:Lpreservescontractibility}
Let $\E$ be a topos that satisfies NS+WDQO+DSO. If $X\in\E$ is contractible, then so is its sheafification $LX$.
\end{teorema}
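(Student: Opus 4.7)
The plan is to transport the explicit contracting homotopy for $X$ through the sheafification functor $L$; the main obstacle will be verifying that $L$ sends the parameter object of the homotopy to a connected one. Since $X$ is $p$-contractible there is a point $a\colon 1\to X$ with $a!\sim 1_X$, and \ref{T:homotopicimpliesahomotopy} realizes this by a bipointed connected object $H$ with points $\alpha,\beta\colon 1\to H$ and an arrow $h\colon H\times X\to X$ satisfying $h\circ\langle\alpha!,1\rangle=1_X$ and $h\circ\langle\beta!,1\rangle=a!$.

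Next, I would apply $L$. As a left exact left adjoint, $L$ preserves finite products and the terminal object, so from $h$ one obtains $Lh\colon LH\times LX\to LX$, together with points $L\alpha,L\beta\colon 1\to LH$ and $La\colon 1\to LX$. Functoriality immediately yields $Lh\circ\langle L\alpha\circ!_{LX},1_{LX}\rangle=1_{LX}$ and $Lh\circ\langle L\beta\circ!_{LX},1_{LX}\rangle=La\circ!_{LX}$, so $Lh$ constitutes a candidate explicit homotopy between $1_{LX}$ and the constant map at $La$.

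The remaining and main step is to show that $LH$ is connected. For this I would invoke the identification $\dec(\E)\simeq\E_{\neg\neg}$ available under NS+WDQO+DSO (already used in the previous section): the reflective subcategories $\mathcal{I}(\dec(\E))$ and $J(\E_{\neg\neg})$ of $\E$ coincide, whence their reflectors agree and provide a natural isomorphism $JLY\cong\mathcal{I}\Pi Y$ for every $Y\in\E$. Applied to $H$, connectedness gives $JLH\cong\mathcal{I}(1)\cong 1$, so $LH\cong 1$ and is thus trivially connected. Now \ref{T:homotopyimplieshomotopicfunctions} converts $Lh$ into the sought relation $1_{LX}\sim La\circ!_{LX}$, establishing $p$-contractibility of $LX$. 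This is consistent with the more conceptual observation that contractibility of $X$ already forces $\Pi X=1$ (apply \ref{P:Homtopicimpliessamepieces} to $a!\sim 1_X$, making $!_{\Pi X}\colon\Pi X\to 1$ split monic), so $LX\cong 1$ along the same equivalence; the explicit-homotopy route simply packages this identity as a witness transported along $L$.
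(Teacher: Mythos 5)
There is a genuine gap at the main step. The identification $JL\cong\mathcal{I}\Pi$ is false: $\dec(\E)\simeq\E_{\neg\neg}$ is an equivalence of \emph{abstract} categories, not a coincidence of subcategories of $\E$. The decidable objects are only $\neg\neg$-\emph{separated} --- this is exactly what the paper exploits in \ref{L:homotopylifts}, where $l_{\Gamma X}$ is a dense \emph{mono}, not an iso --- while the $\neg\neg$-sheaves are the codiscrete objects, i.e.\ the image of $\Lambda$. Hence $JL\cong\Lambda\Gamma$, not $\mathcal{I}\Pi$. For a bipointed connected $H$ this gives $LH\cong\Lambda\Gamma H$, which has at least two global points and is certainly not $1$; likewise $LX\not\cong 1$ in general (if it were, the theorem would be vacuous --- compare the last theorem of the section, where $L2=\Lambda 2$ is a $\neg\neg$-sheaf with exactly two points). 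So both your key step ``$LH\cong 1$, hence connected'' and your closing remark ``$LX\cong 1$ along the same equivalence'' collapse; connectedness of $LH$ is exactly the nontrivial content you cannot get for free, since ``every nonempty $\neg\neg$-sheaf is connected'' is equivalent to sufficient cohesion (Theorem E) and fails, e.g., in quality types.

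Your route can be partially rescued: when $1_X\neq a!$, the connected object $K$ produced by \ref{T:homotopicimpliesahomotopy} has two \emph{distinct} points, so by \ref{L:equivalencesforSC} the topos is sufficiently cohesive, every nonempty $\neg\neg$-sheaf is connected, and in particular $LK$ is; your transported homotopy $Lh$ then works (with a separate trivial argument when $1_X=a!$). But this detour is unnecessary, and it is not what the paper does: the paper never sheafifies the parameter object. It keeps $K$ itself and extends $h:K\times X\to X$ along $1\times l_X$ to a map $h':K\times LX\to LX$, using that $\Gamma K\times\Gamma X\to K\times LX$ is $\neg\neg$-dense monic and that $LX$ is a sheaf (\ref{L:homotopylifts}); the connectedness of $K$ is then fed directly into \ref{T:homotopyimplieshomotopicfunctions}. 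That argument needs no case distinction and no appeal to sufficient cohesion.
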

\begin{lema}\label{L:homotopylifts}
Let $\E$ be a topos that satisfies NS+DSO. Given arrows $h:K\times X\rightarrow Y$ and $f:Y\rightarrow LX$, there is a unique arrow $h':K\times LX\rightarrow LX$ making the following diagram commute:
\begin{equation}\label{E:Explicithomotopylifting}
\begin{gathered}
\xymatrix{
K\times X\ar[r]^(.6)h\ar[d]_{1\times l_X} & Y\ar[d]^{f} \\
K\times LX\ar@{-->}[r]^(.6){h'} & LX.
}
\end{gathered}
\end{equation}
\end{lema}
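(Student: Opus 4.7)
The plan is to reduce the lifting problem to the observation that $L(1_K \times l_X)$ is invertible, and then transpose back through the adjunction $L \dashv J$. First, I would note that since $LX \in \E_{\neg\neg}$, the counit component $\epsilon_{LX}\colon LJLX \to LX$ is an isomorphism (because $J$ is fully faithful), and the triangle identity $\epsilon_{LX} \circ L(l_X) = 1_{LX}$ then forces $L(l_X)$ itself to be an isomorphism. Since the $\neg\neg$-sheafification $L$ is left exact, it preserves the canonical product decomposition, and consequently
\[
L(1_K \times l_X) \;\cong\; 1_{LK}\times L(l_X)\colon L(K\times X)\longrightarrow L(K\times LX)
\]
is an isomorphism.

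Next, I would transpose $f\circ h\colon K\times X \to LX$ across $L\dashv J$ to obtain the unique $\phi\colon L(K\times X) \to LX$ in $\E_{\neg\neg}$ with $J\phi\circ l_{K\times X} = f\circ h$, and then define
\[
\psi \,:=\, \phi\circ L(1_K\times l_X)^{-1}\colon L(K\times LX)\to LX.
\]
Setting $h' := J\psi\circ l_{K\times LX}\colon K\times LX \to LX$ gives the candidate lift. Commutativity of \eqref{E:Explicithomotopylifting} is then verified by chasing the naturality square of $l$ against $1_K\times l_X$, namely
\[
l_{K\times LX}\circ (1_K\times l_X) \,=\, JL(1_K\times l_X)\circ l_{K\times X},
\]
together with $\psi\circ L(1_K\times l_X) = \phi$.

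For uniqueness, any $h''$ satisfying $h''\circ(1_K\times l_X)=f\circ h$ transposes to some $\psi''\colon L(K\times LX)\to LX$ with $\psi''\circ L(1_K\times l_X)=\phi$; invertibility of $L(1_K\times l_X)$ forces $\psi''=\psi$, and the adjunction bijection then forces $h''=h'$. I do not expect a serious obstacle here, as the argument is essentially formal. The most delicate point is bookkeeping: one must carefully distinguish $LX$ viewed as an object of $\E$ (really $JLX$) from its avatar in $\E_{\neg\neg}$, and apply naturality of $l$ and the triangle identity across the inclusion $J$ in the correct category. I also note that the NS and DSO hypotheses do not seem essential for the lifting itself; they presumably serve only to place the lemma inside the ambient hypotheses of the surrounding section, where \hyperlink{T:TeorC}{Theorem C} and \ref{T:contractibleiffAtotheX} will combine with it to prove Theorem \ref{T:Lpreservescontractibility}.
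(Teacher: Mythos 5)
Your proof is correct, and it takes a genuinely different route from the paper's. The paper argues via dense subobjects: using DSO it takes the $\neg\neg$-dense monic $\gamma_K\times\gamma_X\colon\Gamma K\times\Gamma X\to K\times X$, shows that $l_X\circ\gamma_X$ is $\neg\neg$-dense monic (because $\Gamma X$ is decidable, hence separated, so $l_{\Gamma X}$ is dense monic, while $L\gamma_X$ is invertible), concludes that $(1\times l_X)\circ(\gamma_K\times\gamma_X)$ is a dense mono into $K\times LX$, and then extends $f\circ h\circ(\gamma_K\times\gamma_X)$ uniquely along it using the sheaf property of $LX$; a final density argument recovers the commutativity of \eqref{E:Explicithomotopylifting}. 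You instead exploit only the formal structure of the reflection $L\dashv J$: the triangle identity plus full faithfulness of $J$ make $L(l_X)$ invertible, left exactness of sheafification makes $L(1_K\times l_X)$ invertible, and existence and uniqueness of $h'$ both drop out of the adjunction bijection together with naturality of $l$. Your closing remark is also accurate: NS and DSO play no role in your argument, which works for any Lawvere--Tierney topology (indeed any left-exact reflection with fully faithful right adjoint), whereas the paper's proof genuinely invokes DSO through $\gamma$. What the paper's route buys is consistency with the ambient machinery of dense point-subobjects used throughout that section; what yours buys is greater generality and a shorter, hypothesis-free formal argument. The only caution, which you already flag, is the bookkeeping between $LX$ and $JLX$ (e.g.\ $L(1_K\times l_X)$ really has codomain $L(K\times JLX)\cong LK\times LJLX$), but this does not affect the validity of the proof.
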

\begin{proof}
By the naturality of $l$, the following diagram commutes:
\[\xymatrix{
\Gamma X\ar[r]^{\gamma_X}\ar[d]_{l_{\Gamma X}} & X\ar[d]^{l_X} \\
L\Gamma X\ar[r]_{L\gamma_X} & LX.
}\]
Now, as $\Gamma X$ is decidable, it is also $\neg\neg$-separated and thus $l_{\Gamma X}$ is $\neg\neg$-dense monic. On the other hand, as $\gamma_X$ is $\neg\neg$-dense monic, $L\gamma_X$ is an isomorphism (see Proposition 5.26 in \citet{MR972257}). Therefore $l_X\circ\gamma_X$ is $\neg\neg$-dense monic (see Proposition A.4 in \cite{RS2023}). Hence, since $LX$ is a $\neg\neg$-sheaf and $(1\times l_X)\circ(\gamma_K\times\gamma_X):\Gamma K\times\Gamma X\rightarrow K\times LX$ is $\neg\neg$-dense monic, there is a unique $h':K\times LX\rightarrow LX$ making the following diagram commute:
\[\xymatrix{
\Gamma K\times\Gamma X\ar[d]_{\gamma_K\times\gamma_X} & \\
K\times X\ar[r]^(.6)h\ar[d]_{1\times l_X} & Y\ar[d]^{f} \\
K\times LX\ar@{-->}[r]_(.6){h'} & LX
}\]
Lastly, since $\gamma_K\times\gamma_X$ is $\neg\neg$-dense monic and $LX$ is a $\neg\neg$-sheaf, \eqref{E:Explicithomotopylifting} commutes.
\end{proof}
\begin{proof}[Proof of \ref{T:Lpreservescontractibility}]
Let $X\in\E$ be a contractible object. So, by \ref{T:homotopicimpliesahomotopy}, there is a connected object $K$ with two points $a,b$ and a homotopy $h:K\times X\rightarrow X$ such that
\[\xymatrix{
X\ar[r]_(.4){\langle a!,1\rangle}\ar@/^1pc/[rr]^1 & K\times X\ar[r]_(.6)h & X \\
X\ar[r]^(.4){\langle b!,1\rangle}\ar@/_1pc/[rr]_{c!} & K\times X\ar[r]^(.6)h & X.
}\]
for some point $c$ of $X$. Now, applying \ref{L:homotopylifts} to $h$ and $l_X$, there is a unique $h':K\times LX\rightarrow LX$ such that the following diagram commutes:
\begin{equation}\label{D:hlifting}
\begin{gathered}
\xymatrix{
K\times X\ar[r]^(.6)h\ar[d]_{1\times l_X} & X\ar[d]^{l_X} \\
K\times LX\ar[r]^(.6){h'} & LX.
}
\end{gathered}
\end{equation}
Whence the following diagram commutes:
\[\xymatrix{
X\ar[r]_(.4){\langle a!,1\rangle}\ar@/^1.3pc/[rr]^1\ar[d]_{l_X} & K\times X\ar[r]_(.6)h\ar[d]_{1\times l_X} & X\ar[d]^{l_X} \\
LX\ar[r]^(.4){\langle a!,1\rangle}\ar@/_1.3pc/[rr]_1 & K\times LX\ar[r]^(.6){h'} & LX.
}\]
By the universality of $l_X$,
\[\xymatrix{
LX\ar[r]^(.4){\langle a!,1\rangle}\ar@/_1.3pc/[rr]_1 & K\times LX\ar[r]^(.6){h'} & LX
}\]
commutes.

Now, again, by the commutativity of \eqref{D:hlifting}, the following diagram commutes:
\[\xymatrix{
X\ar[r]_(.4){\langle b!,1\rangle}\ar@/^1.3pc/[rr]^{c!}\ar[d]_{l_X} & K\times X\ar[r]_(.6)h\ar[d]_{1\times l_X} & X\ar[d]^{l_X} \\
LX\ar[r]^(.4){\langle b!,1\rangle}\ar@/_1.3pc/[rr]_{L(c!)=L(c)!} & K\times LX\ar[r]^(.6){h'} & LX.
}\]
By the universality of $l_X$,
\[\xymatrix{
LX\ar[r]^(.4){\langle b!,1\rangle}\ar@/_1.3pc/[rr]_{L(c)!} & K\times LX\ar[r]^(.6){h'} & LX
}\]
commutes. Therefore, by \ref{T:homotopyimplieshomotopicfunctions}, $L(c)!\sim 1_{LX}$. That is, $LX$ is contractible.
\end{proof}

\section{Additional examples of contractible spaces}
It was already remarked by \citet{MR2369017}, that any space with an action of a connected monoid with zero should be contractible. This would have the immediate application to any monoid with zero acting on itself. \hyperlink{T:TeorC}{Theorem C} and \hyperlink{T:TeorD}{Theorem D} provide a way to verifying this within their context of applicability (As was already the case for $\Omega$ once there is sufficient cohesion within the proof of \ref{L:equivalencesforSC}).

In fact, they also describe the behavior of Lawvere's unparameterized unit of time and its associated monoid $R$, which have been used as a basis for Synthetic Differential Geometry.  

\begin{teorema}\label{T:monoidwithzeroconnectediffcontractible}
Let $\E$ be a topos satisfying NS, WDQO and DSO. Let $X\in\E$ be a monoid with 0. Then $X$ is connected if and only if $X$ is contractible.
\end{teorema}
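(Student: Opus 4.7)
The plan is to prove the two implications separately; both are direct consequences of the homotopy machinery already developed. The hard work has been done in \hyperlink{T:TeorC}{Theorem C} and \ref{T:homotopyimplieshomotopicfunctions} on one hand, and \hyperlink{T:TeorD}{Theorem D} on the other, so this statement is essentially a corollary whose content is recognising that a monoid-with-$0$ structure on $X$ is \emph{already} an explicit homotopy of $X$ onto $\{0\}$.

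For the nontrivial direction (connected $\Rightarrow$ contractible), the idea is to use the multiplication itself as the explicit homotopy. Write $m:X\times X\to X$ for the multiplication, $e:1\to X$ for the unit and $0:1\to X$ for the zero. The unit axiom categorically reads $m\circ\langle e\cdot!_X,1_X\rangle = 1_X$ and the absorbing-zero axiom reads $m\circ\langle 0\cdot!_X,1_X\rangle = 0\cdot!_X$. These are exactly the two commuting triangles in the definition of an explicit homotopy from Section~5, with the connected parameter object being $X$ itself and the two chosen global elements being $e$ and $0$. Since $X$ is assumed connected, \ref{T:homotopyimplieshomotopicfunctions} applies and gives $1_X\sim 0\cdot!_X$, so $X$ is $p$-contractible with chosen point $0$.

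For the converse, if $X$ is contractible then by \hyperlink{T:TeorD}{Theorem D}(3) the exponential $X^Y$ is connected for every $Y\in\E$. Taking $Y=1$ and using the canonical isomorphism $X^1\cong X$ gives $X$ connected. The only real pitfall is bookkeeping: one must verify that the unit and zero really produce the global elements required by the definition of explicit homotopy, and that the monoid axioms expressed via $m$ reproduce $1_X$ and $0\cdot!_X$ respectively. Both verifications are just the elementwise monoid identities rewritten in arrow form, so no obstacle of substance arises.
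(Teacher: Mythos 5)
Your proof is correct and follows essentially the same route as the paper: the multiplication $m$ with the two global elements given by the unit and the zero is exactly the explicit homotopy the paper writes down, and the converse via Theorem D(3) applied to $X^1\cong X$ matches the paper's appeal to Theorem \ref{T:contractibleiffAtotheX} with $\Pi(X^1)=1$. No substantive difference.
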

\begin{proof}
It is convenient to reason internally. Let $S$ be the local set theory on the local language of $\E$. As $X$ is a monoid with zero, there are closed terms $0$  and $1$ such that: $\vdash_S0\in X$; $x\in X\vdash_S x\cdot 0=0\cdot x=0$; $\vdash_S1\in X$; and $x\in X\vdash_Sx\cdot 1=1\cdot x=x$. The following diagrams commute:
\[\xymatrix{
X\ar[r]_(.4){\langle\bar{1}!,1\rangle}\ar@/^1pc/[rr]^{1_X} & X\times X\ar[r]_(.6)\bullet & X \\
R\ar[r]^(.4){\langle\bar{0}!,1\rangle}\ar@/_1pc/[rr]_{\bar{0}!} & X\times X\ar[r]^(.6)\bullet & X,
}\]
where $\bullet$ is the multiplication of the monoid $X$ and $\bar0,\bar1:1\to X$ are the points corresponding to the closed terms $0,1$. 
Thus $X$ is contractible if it is connected. 

Conversely, by \ref{T:contractibleiffAtotheX}, if $X$ is contractible, then $\Pi(X^1)=1$.
\end{proof}

\begin{cor}
Let $\E$ be a topos satisfying NS, WDQO and DSO. Let $T$ be an object with point $0:1\rightarrow T$. Let $R$ be the pullback object of $\ev^0_T:T^T\rightarrow T$ along 0. Then $R$ is connected if and only if it is contractible.
\end{cor}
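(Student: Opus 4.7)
The plan is to recognize $R$ as a monoid with zero and then invoke Theorem \ref{T:monoidwithzeroconnectediffcontractible}. Internally, $R$ consists of those self-maps $f:T\to T$ satisfying $\ev^0_T(\text{`}f\text{'})=0$, equivalently $f\circ 0=0$. Since composition preserves this property (if $f(0)=0$ and $g(0)=0$ then $(f\circ g)(0)=f(g(0))=f(0)=0$) and the identity $1_T$ trivially belongs to $R$, the submonoid structure of $T^T$ under composition restricts to a monoid structure on $R$.

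The next step is to produce the zero element. The constant map $\bar 0 := 0\circ !_T: T\to T$ has name $\sigma_T^T\circ 0$ by \eqref{E:aadequalssigmaa}, and clearly $\bar0(0)=0$, so by the universal property of the pullback defining $R$ there is an induced point $z:1\to R$. For any global element $f:1\to R$ with corresponding self-map $\tilde f:T\to T$, the composite $\tilde f\circ\bar 0$ is constant with value $\tilde f(0)=0$, hence equals $\bar 0$; likewise $\bar 0\circ\tilde f=\bar 0$. Thus $z$ is a two-sided zero for the multiplication on $R$. To do this cleanly at the level of arrows (rather than via global elements), the argument is internalized in the local set theory of $\E$, or equivalently carried out by a diagram chase using the pullback square and the definition of $c$ from Section 2.

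Having established that $R$ is a monoid with zero, Theorem \ref{T:monoidwithzeroconnectediffcontractible} applies directly and yields that $R$ is connected if and only if it is contractible. The only subtlety, and the main thing to be careful about, is the formal verification that the restriction of the internal composition $c:T^T\times T^T\to T^T$ to $R\times R$ factors through $R\hookrightarrow T^T$; this is where the hypothesis $\ev^0_T\circ j=0\circ!_R$ (with $j:R\hookrightarrow T^T$ the pullback inclusion) is used to show that $\ev^0_T\circ c\circ(j\times j)=0\circ !$, so that the universal property of $R$ produces the required multiplication $R\times R\to R$.
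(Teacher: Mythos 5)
Your proposal is correct and follows essentially the same route as the paper: the paper also reasons internally to identify $R=\{u\in T^T: u\circ 0=0\}$, verifies that $1_T$ and $0!$ lie in $R$ and that composition restricts to $R$, and then concludes via Theorem \ref{T:monoidwithzeroconnectediffcontractible}. Your remark about internalizing the zero-element verification (rather than checking it only on global elements) is exactly the care the paper takes by working in the local set theory.
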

\begin{proof}
The pullback of $\ev^0_T:T^T\rightarrow T$ along 0 is the same thing as calculating the pullback of $T^0$ along $\sigma_T\circ 0$, where $\sigma_T:T\rightarrow T^1$ is the transpose of $\pi_T:T\times 1\rightarrow T$. Reasoning internally, let $S$ be the local set theory on the local language of $\E$. So 
\[
R=\{\langle u,\ast\rangle:u\circ 0=\sigma_T\circ 0(\ast)\}
\]
in $\E$. More simply,
\[
R=\{u\in T^T:u\circ 0=0\}.
\]
Or equivalently,
\[
R=\{u\in T^T:\langle 0,0\rangle\in u\},
\]
taking $\ev^0_T$ and 0 instead of $T^0$ and $\sigma_T\circ 0$, respectively. It is clear that
\[
\vdash_S 1_T,0!\in R.
\]
So let $\overbar{1_T},\overbar{0!}:1\rightarrow R$ be such that $\overbar{1_T}(\ast)=1_T$ and $\overbar{0!}(\ast)=0!$, resp.

Now,
\begin{align*}
\langle u,v\rangle\in R &\vdash_Su\in R\wedge v\in R \\
&\vdash_S u\in T^T\wedge v\in T^T\wedge u\circ 0=0\wedge v\circ 0=0\\
&\vdash_S u\circ v\in T^T\wedge u\circ v\circ 0=0\\
&\vdash_S u\circ v\in R.
\end{align*}
That is, there is an $S$-function $(\langle u,v\rangle\mapsto u\circ v):R\times R\rightarrow R$. Call this function $\circ$. It is clear that
\[
\vdash_S\langle R,\circ,1_T,0!\rangle\textit{ is a monoid with zero}.
\]
Therefore, by \ref{T:monoidwithzeroconnectediffcontractible}, $R$ is connected if and only if it is contractible.
\end{proof}

\begin{obse}
For any topos $\E$ is a topos satisfying NS and WDQO epic images of connected objects are connected. Indeed, if $f:A\rightarrow B$ is an epic arrow in $\E$ with $\Pi_0(A)=1$, then, as $\Pi_0$ preserves epics since it is left adjoint, $\Pi_0 f$ is epic and monic and, accordingly, iso; that is, in that case $\Pi_0(B)\cong 1$.
\end{obse}

\begin{cor}
Let $\E$ be a topos satisfying NS, WDQO and DSO. If $T$ is an object of $\E$ with a $\neg\neg$-dense point $0:1\rightarrow T$, then if the pullback of 0 along $T^0$ is connected, so are $T^T$ and $T$, and, accordingly, $T$ is contractible.
\end{cor}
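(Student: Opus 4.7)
The plan is to use the connectedness of $R$ to produce a $p$-homotopy between $1_T$ and the constant map $0!:=0\circ!_T$ inside $T^T$, thereby making $T$ itself $p$-contractible; the connectedness of $T^T$ and $T$ will then drop out immediately from \hyperlink{T:TeorD}{Theorem D}.

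To start, I would observe that both $1_T$ and $0!$ satisfy the defining equation $u\circ 0=0$ of $R$ (trivially, noting that $(0\circ!_T)\circ 0=0\circ!_1=0$), so they factor uniquely through the pullback as global elements $\overline{1_T},\overline{0!}:1\to R$. Since $R$ is connected by hypothesis, $\Pi_0(R)=1$, and so for \emph{any} two points $a,b:1\to R$ one has $p_R\circ a=p_R\circ b$ automatically (both arrows to the terminal object). Via the natural isomorphism $R^1\cong R$ this is precisely the condition defining $\sim_p$ on $\E(1,R)$, hence $\overline{1_T}\sim_p\overline{0!}$.

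Next, composing with the inclusion $j:R\hookrightarrow T^T$ and appealing to functoriality of $H_p:\E\to\E_p$ furnished by \hyperlink{T:TeorA}{Theorem A}, the homotopy transports to $\text{`}1_T\text{'}\sim_p\text{`}0!\text{'}$ as arrows $1\to T^T$; transposing under the name correspondence $\E(1,T^T)\cong\E(T,T)$ yields $1_T\sim_p 0!$ as arrows $T\to T$. Thus $T$ is $p$-contractible with contracting point $0$. Invoking the equivalence (1)$\Leftrightarrow$(3) of \hyperlink{T:TeorD}{Theorem D}---whose standing hypotheses NS, WDQO, DSO are exactly those of the corollary---yields that $T^X$ is connected for every $X\in\E$. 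Specialising $X=T$ gives the connectedness of $T^T$, and $X=1$ together with the canonical iso $T^1\cong T$ gives the connectedness of $T$.

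The only delicate step is the bookkeeping in the middle paragraph: one must carefully transport $\sim_p$ on $\E(1,R)$ to $\sim_p$ on $\E(T,T)$, first via $j$ using \hyperlink{T:TeorA}{Theorem A}, then unpacking the definition of $\sim_p$ for names and reconciling $p_{(T^T)^1}$ with $p_{T^T}$ across $(T^T)^1\cong T^T$. Notably, the $\neg\neg$-density of $0$ does not seem to enter this argument; its presence in the hypothesis appears motivated by the geometric context (Lawvere's unparameterised unit of time) alluded to in the introduction, rather than by any step of the proof itself.
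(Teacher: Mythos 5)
Your proof is correct, but it takes a genuinely different route from the paper's. The paper leans on the $\neg\neg$-density of $0$ in an essential way: the inclusion $i:R\rightarrowtail T^T$, being a pullback of $0$, is itself $\neg\neg$-dense, so $\Pi(i)$ is epic; since $\Pi(R)=1$ and every arrow out of $1$ is monic, $\Pi(i)$ is an isomorphism, which gives $\Pi(T^T)=1$ at once, and contractibility of $T$ is then read off from Corollary \ref{C:contractibleiffasim1}. You run the implications in the opposite order: you first obtain contractibility of $T$ from the two canonical points $\overline{1_T},\overline{0!}$ of the connected object $R$---which is exactly the mechanism of Theorem \ref{T:homotopyimplieshomotopicfunctions}, applied to the same two points that the preceding corollary uses to exhibit $R$ as a monoid with zero---and only afterwards deduce the connectedness of $T^T$ and of $T\cong T^1$ from the equivalence $(1)\Leftrightarrow(3)$ of \hyperlink{T:TeorD}{Theorem~D}. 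Your closing remark is accurate and is the real payoff of your route: the $\neg\neg$-density of $0$ is never invoked in your argument, so you in fact prove the stronger statement that connectedness of $R$ alone forces $T$ to be contractible (hence $T^T$ and $T$ connected) for any pointed $T$. What the paper's argument buys in exchange for the extra hypothesis is a one-step, more geometric identification $\Pi(T^T)=1$ that bypasses the full strength of Theorem~D; your middle-paragraph bookkeeping through $R^1\cong R$ and $(T^T)^1\cong T^T$ is sound but could be shortened to a single application of the naturality square of $p$ at $i$.
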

\begin{proof}
Consider the following pullback diagram:
\[\xymatrix{
R\ar@{(>->}[r]^i\ar[d] & T^T\ar[d]^{T^0} \\
1\ar@{(>->}[r]_0 & T. 
}\]
Since $0:1\rightarrowtail T$ is $\neg\neg$-dense, so is $i:R\rightarrowtail T^T$. Hence $\Pi(i)$ is epi. Therefore, if $\Pi(R)=1$ then $\Pi(i)$ is monic, and, accordingly, an isomorphism. Then $\Pi(T^T)=1$ and, by \ref{C:contractibleiffasim1}, $T$ is contractible. 
\end{proof}


\section{Final Thoughts}
Even though actual examples of CCC of topological spaces are frustratingly hard to come by, the results proved herein suggest that the topological notions postulated within the context of Axiomatic Cohesion (even without the Axiom of Continuity) do recover some deep truth about cohesion, albeit reasoning within a slightly unconventional logical framework---without the need to consider all-out modal logics.     

The notion of homotopy theory postulated in this report differs from the classical axioms in that no reference is made about its construction---it is thus a synthetic investigation. However, the fact that for any (Lawvere-Tierney) topology on a topos that further satisfies the Nullstellensatz there is a canonical homotopy theory associated with it suggests yet another connection to classical theories. This direction might still provide useful insights.

From the purely categorical viewpoint, and for the sake of completeness, the following definition is proposed. 

\begin{defii}[Morphisms between homotopy theories]Given two $(\E,p),(\E',p')$ homotopy theories, a morphism $(\E,p)\rightarrow(\E',p')$ between them is a pair $(F,q)$ where $F:\E\rightarrow\E'$ is a cartesian closed functor and $q:F\Pi_0\Rightarrow\Pi_0'F$ a natural transformation such that
\[\xymatrix{
\E\rrtwocell^1_{\Pi_0}{p}\ar[d]_F & & \E\ar[d]^F\ar@{}|{=}[r] &
\E\ar[r]^F & \E'\rrtwocell^1_{\Pi_0'}{p'} & & \E'. \\
\E'\ar[rr]_{\Pi_0'}\rrtwocell<\omit>{<-2.3>q} & & \E'
}\]
That is, $q\cdot Fp=p'F$. It is clear that the identity morphism is $(1,1)$.
\end{defii}

\appendix

\renewcommand\thesection{\Alph{section}}

\section{Examples of the Nullstellensatz in precohesive toposes}
Examples of toposes abound that do not satisfy the Nullstellensatz postulate. However, Theorem \ref{T:Cwith1npoints} below, together with the following two propositions show that pre-sheaf and Grothendieck toposes necessarily satisfy it if they are precohesive over sets with the canonical four-adjoint string. 
\begin{prop}[Proposition 4.1 in \citet{MR3263283}]
Let $C$ be a small category whose idempotents split. The canonical $p:\hat{C}\rightarrow\set$ is pre-cohesive if and only if $C$ has a terminal object and every object of $C$ has a point.
\end{prop}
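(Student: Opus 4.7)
The plan is to handle the two directions of the biconditional separately, interpreting the axioms of pre-cohesion for the canonical geometric morphism $p:\hat{C}\to\set$. This morphism automatically provides the adjunctions $p_!\dashv p^*\dashv p_*$, where $p^*$ is the constant-presheaf functor (always fully faithful) and $p_*=\hat{C}(1_{\hat{C}},-)$. The two non-trivial ingredients to interpret are therefore (a) the existence of a further right adjoint $p^!$ to $p_*$, and (b) the Nullstellensatz, i.e.\ that the canonical points-to-pieces transformation $\theta_F:p_*F\to p_!F$ is epic for every $F$.

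For sufficiency, assume $C$ has a terminal object $c_0$ and that every $c$ admits a point $*_c:c_0\to c$. By the Yoneda lemma $y(c_0)\cong 1_{\hat{C}}$, so $p_*F\cong F(c_0)$ is evaluation at $c_0$; evaluation functors preserve all colimits, so $p_*$ admits a right adjoint, explicitly $p^!(S)(c)=S^{\hat{C}(c_0,c)}$. For the Nullstellensatz, use the identification $p_!F\cong\pi_0(\mathrm{el}\,F)$: given any $(c,y)\in\mathrm{el}\,F$, the point $*_c$ induces a morphism $(c_0,F(*_c)(y))\to(c,y)$ in $\mathrm{el}\,F$, so every connected component of $\mathrm{el}\,F$ meets the fibre over $c_0$, which is precisely the epimorphicity of $\theta_F$.

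For necessity, assume $p$ is pre-cohesive. The existence of the right adjoint $p^!$ to $p_*=\Gamma$ forces $\Gamma$ to preserve all colimits, which is equivalent to the terminal presheaf $1_{\hat{C}}$ being a tiny (small-projective) object. Invoke the classical fact (Day--Kelly) that the tiny objects in a presheaf topos are precisely the retracts of representables; the hypothesis that idempotents split in $C$ then promotes $1_{\hat{C}}$ to a representable, say $1_{\hat{C}}\cong y(c_0)$, which forces $c_0$ to be terminal in $C$ by Yoneda. To extract points for each $c\in C$, apply the Nullstellensatz to the representable $y(c)$: the slice $C\downarrow c$ has terminal $(c,1_c)$, hence is connected, so $p_!(y(c))=1$; then $\theta_{y(c)}:\hat{C}(c_0,c)\to 1$ is epic iff $\hat{C}(c_0,c)\neq\emptyset$, providing the desired point of $c$.

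The main technical hurdle is the identification of tiny objects with retracts of representables---a classical but non-trivial result that should be cited explicitly (Day--Kelly, or Johnstone's \emph{Elephant}); this is where the split-idempotents hypothesis on $C$ really intervenes. A smaller subtlety, invoked twice, is the computation $1_{\hat{C}}\cong y(c_0)$ from $\hat{C}(c,c_0)=\{*\}$ for all $c$, which is an immediate consequence of Yoneda.
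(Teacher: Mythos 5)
The paper itself offers no proof of this proposition---it is imported verbatim from \citet{MR3263283}---so there is no internal argument to compare against; what follows evaluates your proof on its own terms. Your necessity direction is essentially complete and is the standard argument: the right adjoint $p^!$ forces $1_{\hat{C}}$ to be tiny, tiny presheaves are retracts of representables, split idempotents in $C$ promote the retract to an isomorphism $1_{\hat{C}}\cong y(c_0)$ so that $c_0$ is terminal by Yoneda, and the Nullstellensatz applied to $y(c)$ (whose category of elements $C/c$ is connected, so $p_!(y(c))=1$) yields $C(c_0,c)\neq\emptyset$. Citing the tiny-objects-are-retracts-of-representables fact explicitly is appropriate, as you say.

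The genuine gap is in the sufficiency direction. Pre-cohesion requires, besides full fidelity of $p^*$, the adjoint $p^!$ and the epimorphicity of $\theta$, that $p_!$ \emph{preserve finite products}---and you list only two ``non-trivial ingredients,'' omitting this one entirely. It is not automatic from what you prove: with $p_!F\cong\pi_0(\mathrm{el}\,F)$, the comparison $\pi_0(\mathrm{el}(F\times G))\to\pi_0(\mathrm{el}\,F)\times\pi_0(\mathrm{el}\,G)$ is surjective by the same use of points you make for $\theta$, but its injectivity genuinely uses the terminal object: a zig-zag in $\mathrm{el}\,F$ between elements over $c_0$ lifts to $\mathrm{el}(F\times G)$ by transporting the $G$-coordinate along the unique maps $c\to c_0$, and symmetrically; one must also check $\pi_0(\mathrm{el}(1))=\pi_0(C)=1$, which again uses the terminal object. (Alternatively one may invoke Johnstone's theorem that for a connected, locally connected, local geometric morphism the Nullstellensatz already forces $p_!$ to preserve finite products, but some argument or citation is indispensable---all the more so in the present paper, where finite-product preservation of $\Pi_0$ is the defining hypothesis of a homotopy theory.) A smaller slip: $p^*$ is \emph{not} ``always fully faithful'' (it fails for $C$ empty or disconnected); it is fully faithful here precisely because the terminal object makes $C$ nonempty and connected, so this should be stated as a consequence of the hypothesis rather than as a generality.
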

\begin{prop}[Proposition 1.4 in \citet{MR2805745}]\label{P:pJNS}
Let $p:\E\rightarrow\set$ be a bounded geometric morphism. If $p$ is precohesive, then $\E$ has a site of definition $\sh(C,J)$ of $\E$ which has a terminal object and all of its objects have a point.
\end{prop}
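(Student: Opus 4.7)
The plan is to start from a small site of definition supplied by boundedness and modify it in two stages so that the underlying category acquires a terminal object and every one of its objects admits a global element, using precohesion to ensure the second modification preserves density.

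Since $p$ is bounded, fix a small full subcategory $G \subseteq \E$ that generates $\E$, so that with the induced canonical topology $K_G$ one has $\E \simeq \sh(G, K_G)$. First, enlarge $G$ to $G' := G \cup \{1\}$ as a full subcategory of $\E$; the inclusion $G \hookrightarrow G'$ is dense, so by the Comparison Lemma $\sh(G', K') \simeq \E$, where $K'$ denotes the induced canonical topology on $G'$. This handles the terminal-object requirement.

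Next, let $C \subseteq G'$ be the full subcategory on those objects admitting at least one global element, endowed with the induced topology $J$. The terminal $1$ lies in $C$, and every object of $C$ has a point by construction. To conclude $\sh(C, J) \simeq \E$ it is enough to see that $C \hookrightarrow G'$ is dense, which in turn follows once one argues that the only object of $G'$ possibly excluded from $C$ is an initial object. This is where precohesion enters: (i) because $p^*$ is the inverse image of a geometric morphism, it preserves the initial object, so the unit $X \to p^* p_! X$ forces $p_! X = \emptyset$ to imply $X \cong 0$ (by strictness of $0$ in a topos); and (ii) the precohesion axiom that the canonical comparison $\theta: p_* \Rightarrow p_!$ is pointwise surjective in $\set$ yields, whenever $p_! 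X \neq \emptyset$, that $p_* X = \E(1, X) \neq \emptyset$. Combining (i) and (ii), $X \not\cong 0$ forces $\E(1, X) \neq \emptyset$. Since the initial object represents only the empty sheaf, discarding it from $G'$ does not affect density, so $(C, J)$ is the desired site.

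The principal obstacle is the Nullstellensatz-type step: deducing from precohesion that every non-initial object of $\E$ has a global element. This relies on the left-adjoint nature of $p^*$ (forcing $p_!$ to reflect the initial object via strictness of $0$) together with the \emph{pieces have points} clause built into the precohesion hypothesis; once this is in hand, the site-theoretic manipulations---adjoining $1$ and discarding $0$ while preserving density---are routine applications of the Comparison Lemma. A secondary technical point worth checking is that the induced topologies $K'$ and $J$ really do coincide with the canonical topology of $\E$ restricted to the enlarged or restricted subcategories, which follows from the general fact that density under a subcanonical topology is inherited by full dense subcategories.
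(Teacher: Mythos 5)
The paper does not prove this statement at all: it is imported verbatim as Proposition~1.4 of the cited Johnstone reference, so there is no in-paper argument to compare yours against. Judged on its own, your reconstruction is correct and is essentially the standard one. The two substantive inputs are both handled properly: (a) the Nullstellensatz for $\E$ itself (every non-initial object has a global element), which you correctly derive from $p^{*}$ preserving $0$ plus strictness of the initial object (so $p_{!}$ reflects $0$) together with the pointwise surjectivity of $\theta\colon p_{*}\Rightarrow p_{!}$ over $\set$; and (b) the Comparison Lemma manipulations. For (b) the only step that deserves an explicit word is that discarding the initial objects from $G'$ preserves density: this holds because the empty sieve on $0$ is universally effective-epimorphic, hence covering in the canonical topology, and because the image of any $0\to X$ is contained in every subobject of $X$, so the non-initial generators already form a jointly epic family onto any non-initial $X$. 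One structural remark: your lemma (a) is precisely the conclusion the paper later wants to extract \emph{from} this proposition (Corollary~\ref{C:NSrules} routes through $\hat{C}$ and Theorem~\ref{T:Cwith1npoints} to get NS for $\E$), so your proof in effect short-circuits that corollary; this is not a logical defect---NS over $\set$ really does follow directly from precohesion as you argue---but it means the site-theoretic packaging is doing no work for you beyond delivering the literal statement.
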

\begin{teorema}\label{T:Cwith1npoints}
Let $C$ be a small category with initial object and such that every object $c\in C$ has a copoint, an arrow $c\rightarrow 0$. Then $\set^C$ satisfies NS.
\end{teorema}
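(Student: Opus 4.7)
The plan is to exhibit, for each non-initial $F \in \set^C$, an explicit global section $1 \to F$. The idea is to use the initial object $0$ as a universal ``source'' for such a section and to invoke the copoint hypothesis merely to ensure that $F(0)$ is inhabited. Throughout, I interpret $\set^C$ as covariant functors $C\to \set$, so a global section of $F$ is a compatible family $(\alpha_c)_{c \in C}$ with $\alpha_c \in F(c)$ and $F(f)(\alpha_c)=\alpha_{c'}$ for every $f:c \to c'$.

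First I would record the key observation: because $0$ is initial, any $f : c \to c'$ in $C$ satisfies $f \circ {!}_c = {!}_{c'}$, where ${!}_c : 0 \to c$ denotes the unique arrow. Consequently the maps $F({!}_c) : F(0) \to F(c)$ exhibit $F(0)$ as a cone over $F$, and any $x \in F(0)$ produces a natural transformation $\alpha : 1 \to F$ by $\alpha_c := F({!}_c)(x)$; naturality is the direct computation $F(f)(\alpha_c) = F(f \circ {!}_c)(x) = F({!}_{c'})(x) = \alpha_{c'}$.

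Second I would verify that $F(0) \neq \emptyset$ whenever $F$ is non-initial. Since $F$ is non-initial, some $F(c_0) \neq \emptyset$; picking $y \in F(c_0)$ and any copoint $\varepsilon : c_0 \to 0$ granted by hypothesis, the element $F(\varepsilon)(y) \in F(0)$ witnesses the nonemptiness. Combining the two steps yields the required global section, confirming NS.

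I do not anticipate a substantive obstacle. The two hypotheses play cleanly separated roles—initiality of $0$ produces the canonical cone, while the copoints keep its vertex $F(0)$ inhabited—and without either one NS fails. For instance, on the walking arrow $\{0 \to 1\}$ with no copoint $1 \to 0$, the functor defined by $F(0)=\emptyset$ and $F(1)=\{\ast\}$ is a non-initial object of $\set^C$ with no global section, illustrating the necessity of the copoint assumption.
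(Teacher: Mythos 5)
Your proof is correct and follows essentially the same route as the paper's: use a copoint to push an element of some nonempty $F(c_0)$ into $F(0)$, then use the unique arrows out of the initial object to spread that element into a global section. The only difference is that you spell out the naturality check, which the paper leaves implicit.
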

\begin{proof}
Let $F\in\set^C$ be different from $0$. So there is an object $d\in C$ such that $Fd\neq\emptyset$. Since $d$ has a copoint $q_d:d\rightarrow 0$, then $F0\neq\emptyset$. Now, let $c\in C$. Via $!:0\rightarrow c$, it follows that $Fc\neq\emptyset$.

Now, let $x\in F0$. Given $c\in C$, define $p_c:1\rightarrow Fc$ as
\[
p_c(\ast):=F(!)(x),
\]
where $!:0\rightarrow c$. Hence $p$ is a natural transformation $1\Rightarrow F$. Therefore $\set^C$ satisfies NS.
\end{proof}
\begin{cor}\label{C:NSrules}
Let $p:\E\rightarrow\set$ be a geometric morphism.  If $p$ is the canonical four-adjoint string from $\E=\hat{C}$ for a small category $C$, or if it is bounded, then $\E$ satisfies the Nullstellensatz if $p$ is precohesive. Furthermore, in these cases, $\dec(\E)\simeq\set$ and the discrete objects of $\E$ are its decidable objects.
\end{cor}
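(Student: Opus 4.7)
The proof splits into two parts: establishing NS, and identifying $\dec(\E)$ with $\set$.

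For NS in the presheaf case $\E=\hat C$, the cited Proposition 4.1 of \cite{MR3263283} gives that precohesion forces $C$ to possess a terminal object $1_C$ and every object $c\in C$ to admit a point $1_C\to c$. Passing to the opposite category, $C^{\mathrm{op}}$ has an initial object and every object carries a copoint; Theorem \ref{T:Cwith1npoints} applied to $C^{\mathrm{op}}$ then yields that $\set^{C^{\mathrm{op}}}\simeq\hat C$ satisfies NS.

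For NS in the bounded case, Proposition \ref{P:pJNS} supplies a site of definition $\sh(C,J)$ where $C$ again has a terminal object and all objects have points. The preceding paragraph gives NS for $\hat C$, and it remains to transfer NS to $\sh(C,J)$. Let $a\dashv i\colon\sh(C,J)\hookrightarrow\hat C$ denote the sheafification-inclusion pair. If $F$ is a nonzero sheaf, then $iF$ must be a nonzero presheaf, for if $iF=0$ then $F\cong aiF\cong a0=0$, using that the left adjoint $a$ preserves the initial object. NS in $\hat C$ then produces a point $1_{\hat C}\to iF$. Since the terminal presheaf is already a sheaf, $1_{\hat C}=i\,1_{\sh(C,J)}$, and the fullness of $i$ lifts this arrow to a point $1_{\sh(C,J)}\to F$ in $\sh(C,J)$.

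For the identification $\dec(\E)\simeq\set$, precohesion entails that $p^{\ast}\colon\set\to\E$ is fully faithful, so its essential image---the full subcategory of discrete objects---is already equivalent to $\set$. Since $\set$ is Boolean, every set is decidable; and $p^{\ast}$, being left exact and coproduct-preserving, carries a complemented diagonal to a complemented diagonal, so every discrete object is decidable. The converse---every decidable object is discrete---is the delicate step: under NS with base $\set$ the decidable reflection $\Pi$ of \cite{RS2023} must factor through the pieces functor $p_{!}\colon\E\to\set$, forcing decidables and discretes to coincide.

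The main obstacle is precisely the last inclusion: showing that in this precohesive-over-$\set$ setting the unit $X\to p^{\ast}p_{!}X$ is an isomorphism whenever $X$ is decidable. The cleanest route is to invoke the universal characterization of $\dec(\E)$ from \cite{RS2023} applied to the fully faithful $p^{\ast}\colon\set\hookrightarrow\E$, exploiting that $\set$ is Boolean so that the decidable reflection and the discrete reflection must agree by universality. The remaining assertions---namely that $\dec(\E)\simeq\set$ and that discrete and decidable coincide---then follow simultaneously.
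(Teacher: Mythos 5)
Your overall skeleton matches the paper's proof, and your bounded case is a correct (and more detailed) spelling-out of what the paper compresses into ``by fullness'': $iF\neq 0$ because $aiF\cong F$, NS in $\hat C$ gives a point of $iF$, and full faithfulness of $i$ together with $1_{\hat C}=i\,1_{\sh(C,J)}$ transports it back. The op-translation from ``terminal object and points'' to the hypotheses of Theorem~\ref{T:Cwith1npoints} is also exactly what the paper intends.

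There is, however, a genuine gap in your presheaf case. Proposition 4.1 of \citet{MR3263283} is stated only for categories whose idempotents split, and its forward direction can fail for a general small $C$: precohesion of $\hat C\to\set$ need not force $C$ itself to have a terminal object (the terminal object may only exist in the idempotent completion, as a splitting of an idempotent). The paper's proof routes around this by replacing $C$ with its Karoubi envelope $K(C)$, using $\hat C\simeq\widehat{K(C)}$ and the invariance of NS under equivalence; you apply the proposition directly to $C$, which is not justified under the corollary's hypotheses. Separately, your treatment of the final claim ($\dec(\E)\simeq\set$ and discrete $=$ decidable) is left as an acknowledged sketch: you identify ``every decidable is discrete'' as the delicate inclusion but do not close it. The paper simply invokes Theorem D of \cite{RS2023}, which asserts precisely that a topos precohesive over a Boolean base and satisfying NS has $\dec(\E)$ as that base; you should cite that result outright rather than half-reconstruct it, since your appeal to ``universality'' of the decidable reflection is not an argument as written.
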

\begin{proof}
In the first case, since for any small category $C$, its Karoubi envelope $K(C)$ has split idempotents, and $\widehat{C}$ is equivalent to $\widehat{K(C)}$,  by \ref{T:Cwith1npoints}, $\hat{C}$ satisfies NS. In the second case let $C$ be the site of definition $\sh(C,J)$ of $\E$. Now, by \ref{P:pJNS}, $\hat{C}$ satisfies NS. By fullness, so do $\sh(C,J)$ and, thus, $\E$.

Finally, by Theorem D in \cite{RS2023}, the conclusion follows.
\end{proof}

\section{Lawvere-Tierney topologies and their homotopy theory}\label{A:EQM}
The main goal of this section is to see that a topology induces a homotopy theory on a topos. This is done first by recalling how equivalence relations induce equivalence classes, which in turn induce quotient maps, in toposes. For topologies, these quotients objects are seen to be functorial---and finite-product-preserving---and their projections natural. 

Throughout this section, the local language and local set theory of a topos $\E$ will be used extensively. As discussed by the authors thoroughly in \cite{RS2021}, local languages in general don't have enough function symbols for all the syntactic functions that can arise from internal constructions. The local language of a topos does not have this problem. Yet in this section, several constructions are given as descriptions and thus it is advantageous to work as if no function symbol is present. The way this is dealt with is as in basic set theoretical considerations where a function $f:X\to Y$ is defined by way of its graph $|f|\subseteq X\times Y$ (see \cite{MR972257}). The usual requirements are expected to be satisfied by it. There is a special case given by any term $\tau$ with free variables $x_1,\ldots, x_n$. In that case, $(\langle x_1,\ldots, x_n\rangle\mapsto \tau)$ denotes the function the graph of which is 
\[
\{\langle x_1,\ldots, x_n, y\rangle|y=\tau(x_1,\ldots, x_n)\},
\]
restricted to the appropriate objects. As a particular case, when the term has no free variables, i.e. a closed term, this yields a global element.  

The internal language equivalent of a topology is a modality. This is an assignment $\mu$ to each formula $\alpha$, i.e. a term of type $\Omega$, another formula $\mu(\alpha)$ such that: (1) $\alpha\vdash_S\mu(\alpha)$; (2) if $\alpha\vdash_S\beta$, then $\mu(\alpha)\vdash_S\mu(\beta)$; and (3) $\mu(\mu(\alpha))\vdash_S\mu(\alpha)$. From these one also obtains (4) $\vdash_S\mu(\alpha\wedge\beta)=\mu(\alpha)\wedge\mu(\beta)$, which in turn yields that
\begin{equation}\label{E:Gammamuimplication}
\mu(\alpha),\Gamma\vdash_S\mu(\beta),
\end{equation}
whenever $\alpha,\Gamma\vdash_S\beta$.

Equivalence relations are defined word by word as in classical set theory: subobjects of the binary product that satisfy reflexivity, symmetry and transitivity. Thus, let $R$ be an equivalence relation on an object $X$. Define
\begin{align}
C(x) &:=\{y\in X:\langle x,y\rangle\in R\}\notag\\
Q_R  &:=\{t:\exists x\in X. t=C(x)\}\notag.
\end{align}
It is readily verified that there is a function $q_X:=(x\mapsto C(x)):X\rightarrow Q_R$.
Indeed, 
\[
x\in X \vdash_S x\in X\wedge C(x)=C(x)\vdash_S\exists z\in X.C(x)=C(z)\vdash_SC(x)\in Q_R.
\]

It is also straightforward to verify the following entailments. Let $f:X\rightarrow Y$ and $g:Y\rightarrow Z$.
\begin{equation}\label{E:equivalenceonrelationsnfunctions}
\vdash_S\langle x,y\rangle\in\lvert f\rvert\Leftrightarrow\langle x',y\rangle\in\lvert f\rvert)\Leftrightarrow(\langle x,y\rangle\in\lvert f\rvert\wedge\langle x',y'\rangle\in\lvert f\rvert\Rightarrow y=y').
\end{equation}
and
\begin{equation}\label{E:precomposingwitharepresentable}
\vdash_S\lvert g\circ f\rvert=\{\langle x,z\rangle\in X\times Z:\langle\tau(x),z\rangle\in\lvert g\rvert\},
\end{equation}
when $f=(x\mapsto\tau)$ for some term $\tau$. And, equivalently,
\begin{equation}\label{E:pwp}
x\in X\vdash_S\langle \tau(x),z\rangle\in\lvert g\rvert\Leftrightarrow\langle x,z\rangle\in\lvert f\rvert.
\end{equation}

The following theorem provides the expected universal property of quotients. 
\begin{teorema}\label{T:universalityofQnegneg}
Let $R$ be an equivalence relation on $X$. Let $f:X\rightarrow Y$ be a function such that
\begin{equation}\label{E:functionsthatpreserveR}
\langle x,x'\rangle\in R\vdash_S\langle x,y\rangle\in\lvert f\rvert\Leftrightarrow\langle x',y\rangle\in\lvert f\rvert.
\end{equation}
Then, there is a unique function $\bar{f}:Q_R\rightarrow Y$ such that the following diagram commutes:
\[\xymatrix{
X\ar[r]^{q_X}\ar[dr]_f & Q_R\ar@{-->}[d]^{\bar{f}} \\
& Y.
}\]
From which it follows that $q_X$ is epic.
\end{teorema}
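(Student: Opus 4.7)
The plan is to work internally in the topos's local set theory throughout, defining $\bar f$ by its graph and verifying functionality using the hypothesis \eqref{E:functionsthatpreserveR} together with the fact that $R$ is an equivalence relation. Concretely, I would set
\[
\lvert\bar f\rvert:=\{\langle t,y\rangle\in Q_R\times Y:\exists x\in X.\,t=C(x)\wedge\langle x,y\rangle\in\lvert f\rvert\},
\]
and then check: (i) totality, which follows because any $t\in Q_R$ has, by definition, a witness $x$ with $t=C(x)$ and $f$ supplies some $y$ with $\langle x,y\rangle\in\lvert f\rvert$; and (ii) single-valuedness, which is where the main work is.

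For single-valuedness, suppose $\langle t,y\rangle,\langle t,y'\rangle\in\lvert\bar f\rvert$, witnessed by $x,x'$ with $t=C(x)=C(x')$. The crucial step is to deduce $\langle x,x'\rangle\in R$ from $C(x)=C(x')$: reflexivity of $R$ gives $x'\in C(x')=C(x)$, so $\langle x,x'\rangle\in R$. Hypothesis \eqref{E:functionsthatpreserveR} then yields $\langle x,y'\rangle\in\lvert f\rvert$, and \eqref{E:equivalenceonrelationsnfunctions} (single-valuedness of $f$) forces $y=y'$. I expect this is the main obstacle, since it is where both the equivalence-relation axioms and the compatibility of $f$ with $R$ must be orchestrated correctly in the internal language.

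Next, I would verify that $\bar f\circ q_X=f$. Using \eqref{E:precomposingwitharepresentable} with $q_X=(x\mapsto C(x))$, one gets $\lvert\bar f\circ q_X\rvert=\{\langle x,y\rangle:\langle C(x),y\rangle\in\lvert\bar f\rvert\}$; the forward inclusion to $\lvert f\rvert$ uses the witness $x$ itself together with reflexivity, and the backward inclusion uses \eqref{E:functionsthatpreserveR} exactly as above.

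For uniqueness, given any $g:Q_R\rightarrow Y$ with $g\circ q_X=f$, I would argue internally: any $t\in Q_R$ has the form $C(x)$, so $g(t)=g(q_X(x))=f(x)=\bar f(q_X(x))=\bar f(t)$. Finally, epicness of $q_X$ is a corollary: if $h_1,h_2:Q_R\rightarrow Z$ satisfy $h_1\circ q_X=h_2\circ q_X=:f$, then this $f$ automatically satisfies \eqref{E:functionsthatpreserveR} (because $\langle x,x'\rangle\in R$ implies $C(x)=C(x')$, hence $q_X(x)=q_X(x')$), so the uniqueness clause applied to $f$ forces $h_1=h_2=\bar f$.
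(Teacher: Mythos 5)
Your proposal is correct and follows essentially the same route as the paper: the same graph $\lvert\bar f\rvert=\{\langle t,y\rangle:\exists x.\,t=C(x)\wedge\langle x,y\rangle\in\lvert f\rvert\}$, the same use of reflexivity to pass from $C(x)=C(x')$ to $\langle x,x'\rangle\in R$ and then invoke \eqref{E:functionsthatpreserveR}, and the same uniqueness argument. If anything, your single-valuedness step is spelled out more explicitly than the paper's (which buries it in a quantifier interchange $\exists x\exists!y\vdash_S\exists!y\exists x$), and your closing remark on epicness supplies the short argument the paper leaves implicit.
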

Notice that, since $\langle x,x'\rangle\in R\vdash_S C(x)=C(x')$, any composite $g\circ q_X$ satisfies \eqref{E:functionsthatpreserveR}, by \eqref{E:pwp} and \eqref{E:equivalenceonrelationsnfunctions}.

\begin{proof}[Proof of \ref{T:universalityofQnegneg}]
Define
\[\lvert\bar{f}\rvert:=\{\langle t,y\rangle:\exists x\in X.t=C(x)\wedge\langle x,y\rangle\in\lvert f\rvert\}.\]
Then,
\begin{align}
x\in X\wedge t=C(x) &\vdash_S(\exists!y.\langle x,y\rangle\in\lvert f\rvert)\wedge t=C(x)\notag\\
&\vdash_S\exists!y.\langle x,y\rangle\in\lvert f\rvert\wedge t=C(x)\notag\\
&\vdash_S\exists x\exists!y.\langle x,y\rangle\in\lvert f\rvert\wedge t=C(x)\notag\\
&\vdash_S\exists!y\exists x.\langle x,y\rangle\in\lvert f\rvert\wedge t=C(x)\notag.
\end{align}
Hence, $\exists x.x\in X\wedge t=C(x)\vdash_S\exists!y\exists x.\langle x,y\rangle\in\lvert f\rvert\wedge t=C(x)$. Therefore,
\[t\in Q_R\vdash_S\exists!y\exists x.\langle x,y\rangle\in\lvert f\rvert\wedge t=C(x),\]
as required for a function $\bar{f}:Q_R\rightarrow Y$.

To see that $\bar{f}\circ q_X=f$, one must prove that 
\[
\vdash_S|\bar{f}\circ q_X|=|f|.
\]
To this effect, observe that
\begin{align}
t=C(x),t=C(x'),\langle x',y\rangle\in\lvert f\rvert &\vdash_S\langle x,x'\rangle\in R\wedge\langle x',y\rangle\in\lvert f\rvert\notag\\
&\vdash_S\langle x,y\rangle\in\lvert f\rvert\notag.
\end{align}
Hence
\[t=C(x),\exists x'(t=C(x')\wedge\langle x',y\rangle\in\lvert f\rvert)\vdash_S\langle x,y\rangle\in\lvert f\rvert.\]
Whence
\[\langle x,t\rangle\in q_X\wedge\langle t,y\rangle\in\bar{f}\vdash_S\langle x,y\rangle\in\lvert f\rvert.\]
and thus
\[
\vdash_S|\bar{f}\circ q_X|\subseteq |f|.
\]
Conversely, $\langle x,y\rangle\in\lvert f\rvert\vdash_S\langle x,C(x)\rangle\in\lvert q_X\rvert$. On other hand,
\begin{align}
\langle x,y\rangle\in\lvert f\rvert &\vdash_SC(x)=C(x)\wedge\langle x,y\rangle\in\lvert f\rvert\notag\\
&\vdash_S\exists x'\in X(C(x)=C(x')\wedge\langle x',y\rangle\in\lvert f\rvert)\notag\\
&\vdash_S\langle C(x),y\rangle\in\lvert\bar{f}\rvert\notag.
\end{align}
Hence $\langle x,y\rangle\in\lvert f\rvert\vdash_S\langle x,C(x)\rangle\in\lvert q_X\rvert\wedge\langle C(x),y\rangle\in\lvert\bar{f}\rvert$. So
\[\langle x,y\rangle\in\lvert f\rvert\vdash_S\exists t.\langle x,t\rangle\in\lvert q_X\rvert\wedge\langle t,y\rangle\in\lvert\bar{f}\rvert.\]
Therefore 
\[\vdash_S|f|\subseteq |\bar{f}\circ q_X|
\]
and thus $\bar{f}\circ q_X=f$.

To see that $\bar{f}$ is unique, let $g:Q_R\rightarrow Y$ be such $g\circ q_X=f$. By \eqref{E:pwp},
\[
x\in X\wedge t=C(x),\langle t,y\rangle\in\lvert g\rvert \vdash_S\langle C(x),y\rangle\in\lvert g\rvert
\vdash_S\langle x,y\rangle\in\lvert f\rvert
\]
Thus,
\begin{align}
x\in X\wedge t=C(x),\langle t,y\rangle\in\lvert g\rvert &\vdash_St=C(x)\wedge\langle x,y\rangle\in\lvert f\rvert\notag\\
&\vdash_S\exists x\in X.t=C(x)\wedge\langle x,y\rangle\in\lvert f\rvert\notag\\
&\vdash_S\langle t,y\rangle\in\lvert\bar{f}\rvert\notag.
\end{align}
Hence, $\exists x\in X(t=C(x)),\langle t,y\rangle\in\lvert g\rvert\vdash_S\langle t,y\rangle\in\lvert\bar{f}\rvert$. So, as
\[
\langle t,y\rangle\in\lvert g\rvert \vdash_St\in Q_R\vdash_S\exists x\in X(t=C(x)),
\]
it follows that 
\[\langle t,y\rangle\in\lvert g\rvert\vdash_S\langle t,y\rangle\in\lvert\bar{f}\rvert.\]
Uniqueness thus follows by reversing the roles of $\bar f$ and $g$.\end{proof}
\begin{teorema}\label{T:modalityimplieshomotopy} For any modality $\mu$ the relation given by
\[R_\mu(X):=\{\langle x,y\rangle\in X\times X:\mu(x=y)\}\]
is an equivalence relation the quotient of which $Q_\mu(X)$ is functorial and preserves finite products.  Moreover, the quotient map $q^X_\mu$ is natural.
\end{teorema}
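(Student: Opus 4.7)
The plan is to break the proof into four parts corresponding to the four claims: $R_\mu(X)$ being an equivalence relation, functoriality of $Q_\mu$, preservation of finite products, and naturality of $q^X_\mu$. The first and fourth parts are direct applications of the modality axioms and the universal property \ref{T:universalityofQnegneg}, respectively.

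For the equivalence relation part, I would verify reflexivity, symmetry and transitivity separately. Reflexivity follows from $\vdash_S x=x$ and modality axiom (1). Symmetry follows from $x=y\vdash_S y=x$ and axiom (2). Transitivity is the place where axiom (4), i.e.\ $\mu(\alpha\wedge\beta)\dashv\vdash_S\mu(\alpha)\wedge\mu(\beta)$, enters: from $x=y\wedge y=z\vdash_S x=z$ and \eqref{E:Gammamuimplication} one obtains $\mu(x=y)\wedge\mu(y=z)\vdash_S\mu(x=z)$.

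For functoriality, given $f:X\to Y$, I would observe that the composite $q^Y_\mu\circ f$ factors through $q^X_\mu$ by \ref{T:universalityofQnegneg}: one must check that $f$ sends $R_\mu(X)$-related pairs into $R_\mu(Y)$-related ones, i.e.\ that $\mu(x=x')\vdash_S\mu(f(x)=f(x'))$, which is axiom (2) applied to $x=x'\vdash_S f(x)=f(x')$. Define $Q_\mu(f):=\overline{q^Y_\mu\circ f}$; then the defining equation $Q_\mu(f)\circ q^X_\mu=q^Y_\mu\circ f$ is precisely the naturality square of $q^X_\mu$, so claims (functoriality) and (naturality) are handled simultaneously. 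Functoriality itself---preservation of identities and composition---then follows from the uniqueness part of \ref{T:universalityofQnegneg}, since $q^X_\mu$ is epic.

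The main work is preservation of finite products. For the terminal object, $R_\mu(1)=1\times 1$ by reflexivity applied to the unique element, so $Q_\mu(1)\cong 1$. For binary products I would construct the comparison map $\langle Q_\mu(\pi_X),Q_\mu(\pi_Y)\rangle:Q_\mu(X\times Y)\to Q_\mu(X)\times Q_\mu(Y)$ and produce an explicit inverse. The key identity, and the step I expect to be the technical heart of the argument, is that for all $(x,y),(x',y')\in X\times Y$ one has $\mu((x,y)=(x',y'))\dashv\vdash_S \mu(x=x')\wedge\mu(y=y')$, which reduces via the internal-language definition of equality on products to axiom (4). Using this, the assignment $(C_X(x),C_Y(y))\mapsto C_{X\times Y}((x,y))$ is well-defined on $Q_\mu(X)\times Q_\mu(Y)$, and a routine check with \eqref{E:pwp} shows it inverts $\langle Q_\mu(\pi_X),Q_\mu(\pi_Y)\rangle$. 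The main obstacle is simply bookkeeping: one must carefully invoke the universal property twice (once on each coordinate) to see that the candidate inverse is actually a morphism from $Q_\mu(X)\times Q_\mu(Y)$ rather than from $X\times Y$, which amounts to verifying the compatibility with $R_\mu(X)\times R_\mu(Y)$ and then with the quotient on the product.
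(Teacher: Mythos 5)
Your proposal is correct and follows essentially the same route as the paper: the same modality axioms for reflexivity, symmetry and transitivity, the same reduction of functoriality and naturality to Theorem~\ref{T:universalityofQnegneg}, and the same key identity $\mu(\langle x,y\rangle=\langle x',y'\rangle)\Leftrightarrow\mu(x=x')\wedge\mu(y=y')$ for products. The only (cosmetic) difference is at the very end: where you build an explicit inverse to the comparison map, the paper simply notes that the induced $r$ with $r\circ q_{X\times Y}=q_X\times q_Y$ is an isomorphism because $q_X\times q_Y$ is epic and identifies exactly the $R_\mu(X\times Y)$-related pairs.
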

\begin{obse}
Let $\D$ be the full subcategory of objects  $X$ for which $q^X_\mu$ is an isomorphism and let $\iota:\D\rightarrow\E$ be the inclusion functor. It can be further seen that, by correstricting $Q_\mu$ to $\D$, 
\[Q_\mu\dashv\iota,\]
and that all objects of $\D$ are $\mu$-separated.   
\end{obse}
\begin{proof}[Proof of \ref{T:modalityimplieshomotopy}]
It is indeed an equivalence relation on $X$: 
\begin{enumerate}[(a)]
\item Reflexivity. As $x=x\vdash_S\mu(x=x)$ and $\vdash_Sx=x$, it follows that $\vdash_S\mu(x=x)$. Hence
\[x\in X\vdash_S\mu(x=x).\]
\item Symmetry. As $x=y\vdash_Sy=x$ and $\mu$ is a modality in $S$, thus
\[\mu(x=y)\vdash_S\mu(y=x).\]
\item Transitivity. As $x=y\wedge y=z\vdash_Sx=z$ and $\mu$ is a modality in $S$,
\[\mu(x=y\wedge y=z)\vdash_S\mu(x=z).\]
As $\vdash_S\mu(x=y)\wedge\mu(y=z)=\mu(x=y\wedge y=z)$, then
\[\mu(x=y)\wedge\mu(y=z)\vdash_S\mu(x=z).\]
\end{enumerate}

Let $f:X\rightarrow Y$ be an function. It is needed to prove that $q^Y_\mu\circ f$ satisfies \eqref{E:functionsthatpreserveR}. By \eqref{E:equivalenceonrelationsnfunctions}, it is enough to verify that
\begin{equation}\label{E:Qnegnegfunctorness}
\langle x,x'\rangle\in R_\mu(X)\vdash_S\langle x,z\rangle\in\lvert q_Y\circ f\rvert\wedge\langle x',z'\rangle\in\lvert q_Y\circ f\rvert\Rightarrow z=z'.
\end{equation}

Now, clearly
\[
x=x',\langle x,y\rangle\in\lvert f\rvert,\langle x',y'\rangle\in\lvert f\rvert\vdash_Sy=y'.
\]
Hence, by \eqref{E:Gammamuimplication},
\begin{equation}\label{E:doublenegfunctiondoubleneg}
\mu(x=x'),\langle x,y\rangle\in\lvert f\rvert,\langle x',y'\rangle\in\lvert f\rvert\vdash_S\mu(y=y').
\end{equation}
On the other hand, by definition of $q$, 
\[\langle y,z\rangle\in\lvert q_Y\rvert\vdash_Sz=C(y).\]
Applying this twice and \eqref{E:doublenegfunctiondoubleneg}, 
\begin{align}
\langle x,x'\rangle\in R_\mu(X),\langle x,y\rangle &\in\lvert f\rvert\wedge\langle y,z\rangle\in\lvert q_Y\rvert,\langle x',y'\rangle\in\lvert f\rvert\wedge\langle y',z'\rangle\in\lvert q_Y\rvert\notag\\
&\vdash_Sz=C(y)\wedge z'=C(y')\wedge C(y)=C(y')\notag\\
&\vdash_Sz=z'\notag.
\end{align}
Whence \eqref{E:Qnegnegfunctorness} is proved.

Then, by \ref{T:universalityofQnegneg}, there is a unique function $Q_\mu^f:Q_\mu^X\rightarrow Q_\mu^Y$ such that the following diagram commutes:
\[\xymatrix{
X\ar[r]^f\ar[d]_{q^X_\mu} & Y\ar[d]^{q^Y_\mu} \\
Q_\mu^X\ar@{-->}[r]_{Q_\mu^f} & Q_\mu^Y,
}\] 
which established both the functoriality of $Q_\mu$ and the naturality of $q_\mu$. 
It remains to see that it that preserves finite products. Since $\mu$ is a modality and by \eqref{E:Gammamuimplication},
\[
x,x',y,y'\in X\vdash_S\mu(\langle x,y\rangle=\langle x',y'\rangle)\Leftrightarrow(\mu(x=x')\wedge\mu(y=y')).
\]
So
\[
x,x',y,y'\in X\vdash_S C(\langle x,y\rangle)=C(\langle x',y'\rangle) \Leftrightarrow C(x)=C(x')\wedge C(y)=C(y')\]
and thus
\[
x,x',y,y'\in X\vdash_S C(\langle x,y\rangle)=C(\langle x',y'\rangle) \Leftrightarrow q_X\times q_Y(\langle x,y\rangle)=q_X\times q_Y(\langle x',y'\rangle).
\]

Hence, by \ref{T:universalityofQnegneg}, there is a unique function $r:Q_\mu(X\times Y)\rightarrow Q_\mu X\times Q_\mu Y$ such that the following diagram commutes:
\[\xymatrix{
X\times Y\ar[r]^(.45){q_{X\times Y}}\ar[dr]_{q_X\times q_Y} & Q_\mu(X\times Y)\ar[d]^r \\
& Q_\mu X\times Q_\mu Y
}\]
Since $q_X\times q_Y$ is epic, $r$ is the promised isomorphism.
\end{proof}


\bibliographystyle{plainnat}
\bibliography{../ref}

\begin{thebibliography}{11}
\providecommand{\natexlab}[1]{#1}
\providecommand{\url}[1]{\texttt{#1}}
\expandafter\ifx\csname urlstyle\endcsname\relax
  \providecommand{\doi}[1]{doi: #1}\else
  \providecommand{\doi}{doi: \begingroup \urlstyle{rm}\Url}\fi

\bibitem[Bell(1988)]{MR972257}
J.~L. Bell.
\newblock \emph{Toposes and local set theories}, volume~14 of \emph{Oxford
  Logic Guides}.
\newblock The Clarendon Press, Oxford University Press, New York, 1988.
\newblock ISBN 0-19-853274-1.
\newblock An introduction, Oxford Science Publications.

\bibitem[Carboni et~al.(1993)Carboni, Lack, and Walters]{MR1201048}
Aurelio Carboni, Stephen Lack, and R.~F.~C. Walters.
\newblock Introduction to extensive and distributive categories.
\newblock \emph{J. Pure Appl. Algebra}, 84\penalty0 (2):\penalty0 145--158,
  1993.
\newblock ISSN 0022-4049,1873-1376.
\newblock \doi{10.1016/0022-4049(93)90035-R}.
\newblock URL \url{https://doi.org/10.1016/0022-4049(93)90035-R}.

\bibitem[Johnstone(2011)]{MR2805745}
Peter Johnstone.
\newblock Remarks on punctual local connectedness.
\newblock \emph{Theory Appl. Categ.}, 25:\penalty0 No. 3, 51--63, 2011.
\newblock ISSN 1201-561X.

\bibitem[Lawvere(2007)]{MR2369017}
F.~William Lawvere.
\newblock Axiomatic cohesion.
\newblock \emph{Theory Appl. Categ.}, 19:\penalty0 No. 3, 41--49, 2007.
\newblock ISSN 1201-561X.

\bibitem[Lawvere(2011)]{MR3288694}
F.~William Lawvere.
\newblock Euler's continuum functorially vindicated.
\newblock In \emph{Logic, mathematics, philosophy: vintage enthusiasms},
  volume~75 of \emph{West. Ont. Ser. Philos. Sci.}, pages 249--254. Springer,
  Dordrecht, 2011.
\newblock ISBN 978-94-007-0213-4; 978-94-007-0214-1.
\newblock \doi{10.1007/978-94-007-0214-1\_13}.
\newblock URL \url{https://doi.org/10.1007/978-94-007-0214-1_13}.

\bibitem[Marmolejo and Menni(2017)]{MR3654357}
F.~Marmolejo and M.~Menni.
\newblock On the relation between continuous and combinatorial.
\newblock \emph{J. Homotopy Relat. Struct.}, 12\penalty0 (2):\penalty0
  379--412, 2017.
\newblock ISSN 2193-8407,1512-2891.
\newblock \doi{10.1007/s40062-016-0131-5}.
\newblock URL \url{https://doi.org/10.1007/s40062-016-0131-5}.

\bibitem[McLarty(1987)]{MR0877866}
Colin McLarty.
\newblock Elementary axioms for canonical points of toposes.
\newblock \emph{J. Symbolic Logic}, 52\penalty0 (1):\penalty0 202--204, 1987.
\newblock ISSN 0022-4812,1943-5886.
\newblock \doi{10.2307/2273873}.
\newblock URL \url{https://doi.org/10.2307/2273873}.

\bibitem[McLarty(1992)]{MR1182992}
Colin McLarty.
\newblock \emph{Elementary categories, elementary toposes}, volume~21 of
  \emph{Oxford Logic Guides}.
\newblock The Clarendon Press, Oxford University Press, New York, 1992.
\newblock ISBN 0-19-853392-6; 0-19-851473-5.
\newblock Oxford Science Publications.

\bibitem[Menni(2014)]{MR3263283}
M.~Menni.
\newblock Continuous cohesion over sets.
\newblock \emph{Theory Appl. Categ.}, 29:\penalty0 No. 20, 542--568, 2014.
\newblock ISSN 1201-561X.

\bibitem[Ruiz~Hern\'andez and Sol\'orzano(2021)]{RS2021}
Enrique Ruiz~Hern\'andez and Pedro Sol\'orzano.
\newblock On logical parameterizations and functional representability in local
  set theories, 2021.

\bibitem[Ruiz~Hern\'andez and Sol\'orzano(2023)]{RS2023}
Enrique Ruiz~Hern\'andez and Pedro Sol\'orzano.
\newblock Elementary axioms for parts in toposes, 2023.

\end{thebibliography}

\end{document}